\theoremstyle{plain}
\newtheorem{teorema}{Theorem}[section]
\theoremstyle{definition}
\newtheorem{definizione}[teorema]{Definition}
\theoremstyle{definition}
\theoremstyle{definition}
\theoremstyle{definition}
\theoremstyle{plain}
\newtheorem{corollario}[teorema]{Corollary}
\theoremstyle{plain}
\newtheorem{proposizione}[teorema]{Proposition}
\theoremstyle{plain}
\newtheorem{lemma}[teorema]{Lemma}
\theoremstyle{plain}
\theoremstyle{definition}
\newtheorem{remark}[teorema]{Remark}
\theoremstyle{plain}
\numberwithin{equation}{section}
\numberwithin{teorema}{section}
\def\dfrac#1#2{\lower0.15ex\hbox{\large$\frac{#1}{#2}$}}
\begin{document}

\pagestyle{plain}
\pagenumbering{arabic}


\title{Wireless random-access networks \\ with bipartite interference graphs}

\author{\renewcommand{\thefootnote}{\arabic{footnote}}
\renewcommand{\thefootnote}{\arabic{footnote}}
Sem C.\ Borst\,%
\footnotemark[1]
\\
\renewcommand{\thefootnote}{\arabic{footnote}}
Frank den Hollander\,%
\footnotemark[2]
\\
\renewcommand{\thefootnote}{\arabic{footnote}}
Francesca R.\ Nardi\,%
\footnotemark[1]\,\,\,\,\footnotemark[3]
\\
\renewcommand{\thefootnote}{\arabic{footnote}}
Matteo Sfragara\,%
\footnotemark[2]\,\,\,\,\footnotemark[4]
}

\footnotetext[1]{%
Department of Mathematics and Computer Science, Eindhoven University of Technology, The Netherlands
}

\footnotetext[2]{%
Mathematical Institute, Leiden University, The Netherlands
}

\footnotetext[3]{%
Department of Mathematics, University of Florence, Italy
}

\footnotetext[4]{%
Department of Mathematics, Stockholm University, Sweden
}

\date{\today}

\maketitle

\begin{abstract}

We consider random-access networks where nodes represent servers with a queue and can be either active or inactive. A node deactivates at unit rate, while it activates at a rate that depends on its queue length, provided none of its neighbors is active. 
We consider arbitrary bipartite graphs in the limit as the initial queue lengths
become large and identify the transition time between the two states where one half of the network is active and the other half is inactive. The transition path is decomposed into a succession of transitions on complete bipartite subgraphs. We formulate a randomized greedy algorithm that takes the graph as input and gives as output the set of transition paths the network is most likely to follow. Along each path we determine the mean transition time and its law on the scale of its mean. Depending on the activation rates, we identify three regimes of behavior.

\medskip\noindent
\emph{Keywords:} Random-access networks, activation protocols, bipartite interference graphs, transition time, randomized algorithm. \\
\emph{MSC2010:} 
60K25, 
60K30, 
90B15, 
90B18. 
\\
\emph{Acknowledgments:} The research in this paper was supported through NWO Gravitation Grant 024.002.003--NETWORKS.
\end{abstract}

\newpage

\tableofcontents

\newpage


\section{Introduction}
\label{s:intro}

The present paper is a continuation of \cite{BdHNS18}. In Section~\ref{ss:motivation} we give our motivation, which is a summary of the more extensive motivation provided in \cite[Section 1.1]{BdHNS18}, where also \emph{relevant references} to the literature are included. In Section~\ref{ss:model} we formulate the random-access model whose performance we analyze in detail. In Section~\ref{ss:interference} we introduce the interference graph and recall a key theorem from \cite{BdHNS18} for the total transition time on complete bipartite graphs. In Section~\ref{ss:outline} we hint at the key idea behind our analysis, which involves transitions along a sequence of complete bipartite subgraphs selected via a randomized greedy algorithm, and give an outline of the remainder of the paper. 
  

\subsection{Motivation and background}
\label{ss:motivation}

We are interested in transition time asymptotics of \emph{queue-based random-access protocols in wireless networks}. Specifically, we consider a stylised stochastic model for a wireless network, represented in terms of an undirected graph $G = (S,E)$, referred to as the \emph{interference graph}. The set of nodes $S$ labels the servers and the set of edges $E$ indicates which pairs of servers interfere and are therefore prevented from simultaneous activity (see Fig.~\ref{fig:network}). We denote by $X(t)=(X_w(t))_{w \in S}$ the joint activity state at time $t$, which is an element of the state space 
\begin{equation}
\label{Xdef}
\mathcal{X} = \big\{x \in \{0,1\}^{|S|}\colon\, x_w x_{\bar{w}} = 0\,\,\,\forall\, (w,\bar{w}) \in E\big\},
\end{equation}
where $x_w = 0$ means that node $w$ is inactive and $x_w =1$ means that node $w$ is active. 

\begin{figure}[htbp]
\begin{center}
\vspace{0.5cm}
\includegraphics[width=.45\linewidth]{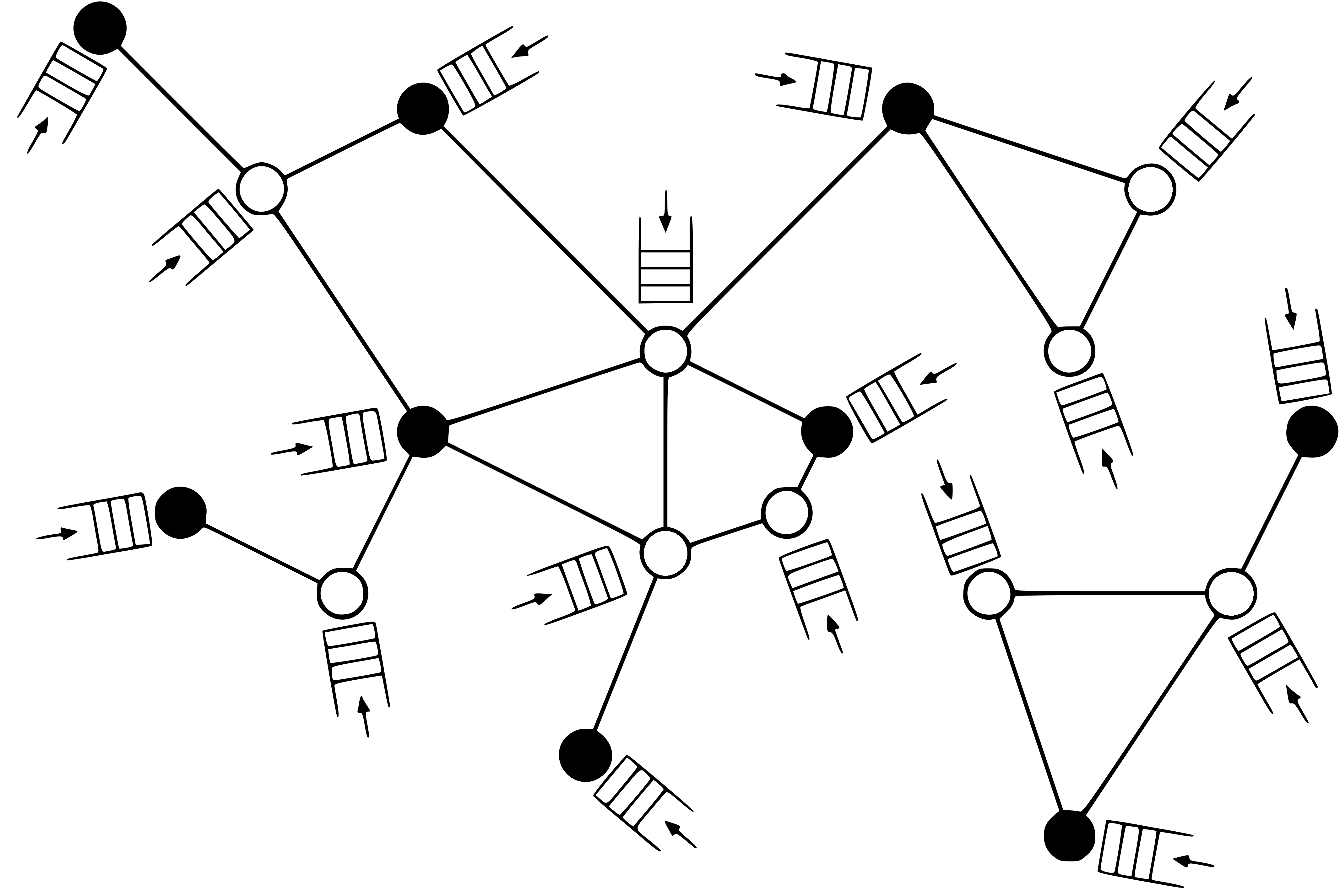}
\vspace{0.5cm}
\caption{\small A random-access network. Each node represents a server with a queue. 
Packets arrive that require a random service time.}
\label{fig:network}
\end{center}
\vspace{-0.5cm}
\end{figure}

We assume that packets arrive at the nodes as independent Poisson processes and have independent exponentially distributed sizes. When a packet arrives at a node, it joins the queue at that node and the queue length undergoes an instantaneous jump equal to the size of the arriving packet. The queue length decreases at a constant speed $c$ (as long as it is positive) when the node is active. We denote by $Q(t)=(Q_w(t))_{w \in S}$ the joint queue length state at time $t$. When node $w$ is inactive at time $t$, it activates at a rate that is an increasing function of $Q_w(t)$, \emph{provided none of its neighbors is active}. When a node is active at time $t$, it deactivates at rate $1$. The joint process
\begin{equation}
\label{MP}
(X(t),Q(t))_{t \geq 0}
\end{equation} 
evolves as a time-homogeneous Markov process with state space $\mathcal{X} \times \mathbb{R}_{\geq 0}^{|S|}$, since the transition rates depend on time only via the the current state of the vector.

The Markov process in \eqref{MP} may be viewed as a \emph{hard-core interaction model with state-dependent activation rates}. Its present state not only depends on the history of the packet arrivals and their service times (which cause upward jumps in the queue lengths), but also on the history of the activity process (through the gradual reduction in queue lengths during activity periods). The state-dependent nature of the activation rates raises interesting and challenging issues from a methodological perspective. We are particularly interested in what happens when the initial queue lengths 
\begin{equation}
Q(0) = (Q_w(0))_{w \in S},
\end{equation} 
become large. In this limit the network exhibits \emph{metastable behavior}: before becoming active, an inactive node must wait until all the nodes it interferes with have become inactive simultaneously, which takes a long time when the queues at these nodes are long and the activation rates grow without bound as function of the queue length.

In \cite{BdHNS18} we focused on the simple case of a \emph{complete bipartite} interference graph: the node set can be partitioned into two nonempty sets $U$ and $V$ such that two nodes interfere if and only if one belongs to $U$ and the other belongs to $V$. In the present paper we turn our attention to \emph{general bipartite} interference graphs, for which not necessarily all nodes in $U$ interfere with all nodes in $V$. This case will turn out to be considerably more challenging. We will be interested in starting from the state where all the nodes in $U$ are active and all the nodes in $V$ are inactive, and examining the transition time to the state where all the nodes in $U$ are inactive and all the nodes in $V$ are active. We refer to this transition as a \emph{metastable crossover}. It will turn out that, in order to achieve the full transition, the network goes through a succession of \emph{subtransitions}, in which a certain succession of complete bipartite subgraphs achieve a metastable crossover and, in doing so, effectively remove themselves from the network. This succession depends in a delicate manner on the \emph{full structure} of the bipartite interference graph, which we capture with the help of a \emph{randomized greedy algorithm} that identifies which subtransition occurs first, which second, etc., and with what probability. By combining the results in \cite{BdHNS18} with a detailed analysis of the algorithm, we are able to determine the distribution of the \emph{full metastable crossover time} to leading order as the initial queue lengths become large.  


\subsection{Mathematical model}
\label{ss:model}

We consider the bipartite graph $G=((U,V),E)$, where $U \cup V$ is the set of nodes and $E$ is the set of edges that connect a node in $U$ to a node in $V$, and vice versa (edges are undirected). We set $N = |V|$. We recall some definitions and basic facts from \cite{BdHNS18}.

\begin{definizione}{\bf [Key notions 1]} \label{defcomplete}
$\mbox{}$\\
{\bf (1) State of a node.}
A node in the network can be either \textit{active} or \textit{inactive}. The state of node $w$ at time $t$ is described by a Bernoulli random variable $X_w(t) \in \{0,1\}$, defined as
\begin{equation} 
X_w(t) = 
\begin{cases} 
0, \text{ if } w \text{ is inactive at time } t ,\\ 
1, \text{ if } w \text{ is active at time } t. 
\end{cases}
\end{equation} 
The configuration at time $t$ is denoted by
\begin{equation}
X(t) = \{ X_w(t) \}_{w \in U \cup V}.
\end{equation}
We denote by $1_U$ ($1_V$) the configuration where all nodes in $U$ are active (inactive) and all nodes in $V$ are inactive (active).\\
{\bf (2) Transition time.} 
Our main object of interest is the {\it transition time} to $1_V$ starting from $1_U$, i.e.,
\begin{equation} \label{transition}
\tau_{1_V} = \min \big\{t \geq 0\colon\, X(t) = 1_V\} \quad \text{ given } \quad X(0) = 1_U.
\end{equation}
{\bf (3) Activation and deactivation of a node.}
An active node $w$ turns inactive according to a {\it deactivation Poisson clock:} when the clock ticks the node switches itself off. Conversely, an inactive node $w$ attempts to become active according to an {\it activation Poisson clock}, but the attempt is successful only when no neighbors of $i$ are active. We are interested in what are called \textit{internal models}, where the activation rate at node $w$ at time $t$ depends on the queue length at node $w$ at time $t$. The deactivation rate is $1$ and does not depend on the queue length.\\ 
{\bf (4) Queue length at a node.}
Let $t \mapsto Q_w^+(t)$ be the \textit{input process} describing packets arriving at node $w$ according to a Poisson process $t \mapsto N_w(t) = \mathrm{Poisson}(\lambda t)$ and requiring i.i.d.\ exponential service times $Y_{wn}$, $n \in \mathbb{N}$, with rate $\mu_U$ for $w \in U$ and $\mu_V$ for $w \in V$. This is a compound Poisson process with mean $\rho_U = \lambda / \mu_U$ for $w \in U$ and $\rho_V = \lambda / \mu_V$ for $w \in V$. Let $t \mapsto Q_w^-(t)$ be the \textit{output process} representing the cumulative amount of work that is processed by the server at node $w$ in the time interval $[0,t]$ at rate $c$, which equals $cT_w(t) = c \int_0^t X_w(s) ds$. In order to ensure that the queue length tends to decrease when a node is active, we assume that $\rho_U < c$ and $\rho_V < c$. Define
\begin{equation} 
\Delta_w(t) = Q_w^+(t) - Q_w^-(t) = \sum_{n=0}^{N_w(t)} Y_{wn} - c T_w(t)
\end{equation}
and let $s^* = s^*(t)$ be the value where $\sup_{s \in [0,t]} [\Delta_w(t) - \Delta_w(s)]$ is reached, i.e., equals $[\Delta_w(t) - \Delta_w(s^*-)]$. Let $Q_w(t) \in \mathbb{R}_{\geq 0}$ denote the queue length at node $w$ at time $t$. Then
\begin{equation}
Q_w(t) = \max\big\{ Q_w(0) + \Delta_w(t),\,\Delta_w(t)-\Delta_w(s^*-) \big\},
\end{equation}
where $Q_w(0)$ is the initial queue length. The maximum is achieved by the first term when $Q_w(0) \geq -\Delta_w(s^*-)$ (the queue length never sojourns at $0$), and by the second term when $Q_w(0) < -\Delta_w(s^*-)$ (the queue length sojourns at $0$ at time $s^*-$). \\
{\bf (5) Initial queue length.}
The \textit{initial queue length} is assumed to be given by
\begin{equation}
\label{initialqueues}
Q_w(0) = 
\left\{\begin{array}{ll} 
\gamma_U r, &w \in U, \\  
\gamma_V r, &w \in V, 
\end{array}
\right.
\end{equation}
where $\gamma_U, \gamma_V > 0$, and $r$ is a parameter that tends to infinity. In order to ensure that the queue lengths at nodes in $V$ always remain of order $r$, we assume that 
\begin{equation}
\label{assumptionparameters}
\frac{\gamma_U}{c- \rho_U} < \frac{\gamma_V}{c-\rho_V}.
\end{equation}
We will often write $Q_U(0)$ and $Q_V(0)$ to indicate the initial queue lengths at nodes in $U$ and $V$, respectively.\\
{\bf (6) Dependence of activation rate on queue length.} 
Let $g_U, g_V \in \mathcal{G}$ with
\begin{equation}
\mathcal{G} = \Big\{g\colon\,\mathbb{R}_{\geq 0} \to \mathbb{R}_{\geq 0}\colon\,
g \text{ non-decreasing and continuous},\, g(0)=0,\, \lim_{x \to \infty} g(x) = \infty\Big\}.
\end{equation}
The deactivation clocks tick at rate $1$, while the activation clocks tick at rate 
\begin{equation}
\label{rint}
r_w(t) = 
\left\{\begin{array}{ll} 
g_U(Q_w(t)), &w \in U, \\ 
g_V(Q_w(t)), &w \in V,
\end{array}
\right. 
\qquad t \geq  0.
\end{equation}
We focus on the particular choice
\begin{equation} \label{aggress1}
\begin{array}{ll}
g_U(x) = B x^{\beta}, &x \in [0, \infty), \\
g_V(x) = B' x^{\beta'}, &x \in [0, \infty),
\end{array}
\end{equation}
with $B,B', \beta, \beta' \in (0, \infty)$. We assume that nodes in $V$ are {\it much more aggressive} than nodes in $U$, namely,
\begin{equation} \label{beta'}
\beta' > \beta +1.
\end{equation}
As we will see later, this ensures that the transition path from $1_U$ to $1_V$ can be decomposed into a succession of transitions on complete bipartite subgraphs. 
\end{definizione}


\subsection{Interference graph}
\label{ss:interference}

Write $\mathbb{P}_{1_U}$ and $\mathbb{E}_{1_U}$ to denote probability and expectation on path space given that the initial configuration is $1_U$ and the initial queue lengths are as in \eqref{initialqueues}. We say that an event occurs \textit{with high probability} if its $\mathbb{P}_{1_U}$-probability tends to 1 as $r \to \infty$.

In \cite[Theorem 1.7]{BdHNS18}, building on results from \cite{BdHNT19}, we analyzed the mean transition time $\mathbb{E}_{1_U} [\tau_{1_V}]$ and the law of $\tau_{1_V}/\mathbb{E}_{1_U} [\tau_{1_V}]$ for the special case where the interference graph is a {\it complete bipartite graph}. They are strongly related to the initial queue lengths $Q_U(0)$ at the nodes in $U$.

\begin{figure}[htbp]
\vspace{0.4cm}
\begin{center}
\setlength{\unitlength}{0.3cm}
\begin{picture}(10,8)(18,0)
{\thicklines
\qbezier(0,-1)(0,3)(0,7)
\qbezier(-1,0)(6,0)(12,0)
\qbezier(0.1,5)(3,0.2)(10,0.2)
}
\put(12.5,-.25){$x$}
\put(-.8,7.6){$\mathcal{P}_{\mathrm{sub}}(x)$}
{\thicklines
\qbezier(17,-1)(17,3)(17,7)
\qbezier(16,0)(23,0)(29,0)
\qbezier(17.1,5)(19,0.2)(23,0)
\qbezier(23,0)(25,0)(27,0)
}
\qbezier[40](17.1,5)(21,5)(23,0)
\put(29.5,-.25){$x$}
\put(16.2,7.6){$\mathcal{P}_{\mathrm{cr}}(x)$}
\put(22.5,-1.7){$\tfrac{1}{C}$}
\put(23,0){\circle*{0.35}}
{\thicklines
\qbezier(34,-1)(34,3)(34,7)
\qbezier(33,0)(40,0)(46,0)
\qbezier(34.1,5)(37,5)(40,5)
\qbezier(40,5)(40,2.5)(40,0)
}
\put(46.5,-.25){$x$}
\put(33.2,7.6){$\mathcal{P}_{\mathrm{sup}}(x)$}
\put(39.7,-1.5){$1$}
\put(40,0){\circle*{0.35}}
\end{picture}
\end{center}
\vspace{0.5cm}
\caption{\small Trichotomy for $x \mapsto \mathcal{P}(x)$: $\beta \in (0, \frac{1}{|U|-1})$ (left);
$\beta = \frac{1}{|U|-1}$ (center); $\beta \in (\frac{1}{|U|-1},\infty)$ (right). The curve in the 
center is convex when $C \in (0,\tfrac12)$ and concave when $C \in (\tfrac12,1)$. The curve 
on the right is the limit of the curve in the center as $C \uparrow 1$.}
\label{fig:trichotomy}
\end{figure}
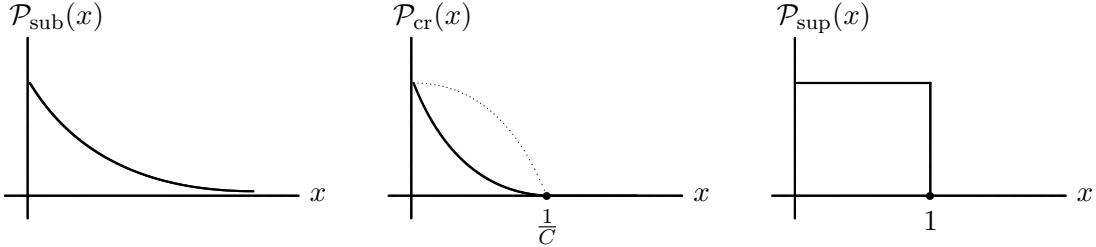

\begin{teorema}{\bf [Transition time for complete bipartite graph \cite[Theorem 1.7]{BdHNS18}]} \label{teoremapaper1} 
Let $G$ be a complete bipartite graph. Suppose that \eqref{aggress1}--\eqref{beta'} hold. Suppose that the initial queue lengths at the nodes in $U$ equal $Q_U(0) = \gamma_U r$.
\begin{itemize}
\item[{\rm (I)}] $\beta \in (0, \frac{1}{|U|-1})$: subcritical regime. The transition time satisfies
\begin{equation}
\mathbb{E}_{1_U} [\tau_{1_V}] = F_{\mathrm{sub}} \, Q_U(0)^{\beta(|U|-1)}\, [1+o(1)], \qquad r \to \infty,
\end{equation}
with $F_{\mathrm{sub}}= \frac{1}{|U|B^{-(|U|-1)}}$,
and
\begin{equation} 
\lim_{r \to \infty} \mathbb{P}_{1_U} \bigg( \frac{\tau_{1_V}} {\mathbb{E}_{1_U}[\tau_{1_V}]} > x \bigg) = \int_x^{\infty} \mathcal{P}_{\mathrm{sub}}(y)\,dy =  e^{-x}, \qquad x \in [0,\infty)
\end{equation}
with
\begin{equation}
\mathcal{P}_{\mathrm{sub}}(z) = e^{-z}, \qquad z \in [0,\infty).
\end{equation}
\item[{\rm (II)}] $\beta = \frac{1}{|U|-1}$: critical regime. The transition time satisfies
\begin{equation}
\mathbb{E}_{1_U} [\tau_{1_V}] = F_{\mathrm{cr}} \,Q_U(0)\, [1+o(1)], \qquad r \to \infty,
\end{equation}
with $F_{\mathrm{cr}} =\frac{1}{|U|B^{-(|U|-1)} + c-\rho_U}$,
and
\begin{equation} 
\begin{split}
\lim_{r \to \infty} \mathbb{P}_{1_U} \bigg( \frac{\tau_{1_V}} {\mathbb{E}_{1_U}[\tau_{1_V}]} > x \bigg) & = \int_x^{\infty} \mathcal{P}_{\mathrm{cr}}(y)\, dy \\
& = \left\{\begin{array}{ll}
(1-C x)^{\frac{1-C}{C}}, &\text{ if } x \in [0, \frac{1}{C}), \\[0.2cm]
0,  &\text{ if } x \in [\frac{1}{C}, \infty),
\end{array}
\right. \qquad x \in [0,\infty),
\end{split}
\end{equation}
with
\begin{equation}
\mathcal{P}_{\mathrm{cr}}(z) = \left\{\begin{array}{ll}
(1-C)(1-C z)^{\frac{1}{C}-2}, &\text{ if } z \in [0, \frac{1}{C}), \\[0.2cm]
0,  &\text{ if } z \in [\frac{1}{C}, \infty),
\end{array}
\right.
\end{equation}
and $C = F_{\mathrm{cr}}\, (c-\rho_U)  \in (0,1)$.

\item[{\rm (III)}] $\beta \in (\frac{1}{|U|-1},\infty)$: supercritical regime. The transition time satisfies
\begin{equation}
\mathbb{E}_{1_U} [\tau_{1_V}] = F_{\mathrm{sup}} \,Q_U(0) \, [1+o(1)], \qquad r \to \infty,
\end{equation}
with $F_{\mathrm{sup}}= \frac{1}{c-\rho_U}$,
and
\begin{equation} 
\lim_{r \to \infty} \mathbb{P}_{1_U} \bigg( \frac{\tau_{1_V}} {\mathbb{E}_{1_U}[\tau_{1_V}]} > x \bigg) = \int_x^{\infty} \mathcal{P}_{\mathrm{sup}}(y) \,dy = \left\{\begin{array}{ll}
1, &\text{ if } x \in [0,1), \\[0.2cm]
0, &\text{ if } x \in [1,\infty),
\end{array}
\right.
\qquad x \in [0,\infty),
\end{equation}
with 
\begin{equation}
\mathcal{P}_{\mathrm{sup}}(z) = \delta_1 (z), \qquad z \in [0,\infty),
\end{equation}
where $\delta_1(z)$ is the Dirac function at 1.
\end{itemize}
\end{teorema}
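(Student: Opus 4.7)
The plan rests on an adiabatic reduction of $\tau_{1_V}$ to a lower-dimensional hitting-time problem. Two structural facts drive it: on the complete bipartite graph the $U$-nodes are \emph{independent} two-state chains (active/inactive with rates $1$ and $g_U(Q_U(t))$) so long as no node in $V$ is active; and the gap $\beta'>\beta+1$ makes the $V$-activation rate $g_V(Q_V(t))\asymp r^{\beta'}$ dominate the $U$-reactivation rate $|U|\,g_U(Q_U(t))\asymp r^{\beta}$. Consequently, during an all-$U$-inactive excursion a $V$-node wins the activation race with probability $1-O(r^{-(\beta'-\beta)})$, and once this happens the remaining $V$-nodes cascade to active and the $U$-nodes all deactivate in a time of order $r^{-\beta'}=o(1)$. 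Hence $\tau_{1_V}$ equals, to leading order, the first hitting time of $K=0$, where $K(t)$ is the number of active $U$-nodes, viewed as a birth--death chain on $\{0,1,\ldots,|U|\}$ with up-rate $(|U|-k)g_U(Q_U(t))$, down-rate $k$, and state $0$ made absorbing.

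\textbf{Effective hazard and fluid queue.}
At a frozen common queue value $q$, the mean hitting time of state $0$ from $|U|$ for this birth--death chain is computed via the standard formula for birth--death hitting times (or by direct recursion) to be
\begin{equation*}
\mathbb{E}_{|U|}[T_0] \;=\; \frac{g_U(q)^{|U|-1}}{|U|}\,(1+o(1)),\qquad g_U(q)\to\infty,
\end{equation*}
in which the factor $g_U(q)^{|U|-1}$ is the combinatorial cost of stringing together $|U|-1$ successive deactivations against much faster reactivations. This identifies the effective hazard of $\tau_{1_V}$ as $\mu(q)=|U|B^{-(|U|-1)}q^{-\beta(|U|-1)}(1+o(1))$. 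Since each $U$-node is active a fraction $g_U(Q_U)/(1+g_U(Q_U))=1-o(1)$ of the time while its queue is large, $Q_U(s)=\gamma_U r-\alpha s+o(r)$ with $\alpha:=c-\rho_U$, valid up to the fluid drain time $s=\gamma_U r/\alpha$. The averaging principle then yields
\begin{equation*}
\mathbb{P}_{1_U}\bigl(\tau_{1_V}>t\bigr) \;\approx\; \exp\!\left(-\frac{|U|B^{-(|U|-1)}}{\alpha}\int_{\gamma_U r-\alpha t}^{\gamma_U r} u^{-\beta(|U|-1)}\,du\right).
\end{equation*}

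\textbf{Three regimes.}
The trichotomy now follows by inspecting $u\mapsto u^{-\beta(|U|-1)}$. (I) If $\beta(|U|-1)<1$ the integrand is bounded, so on the relevant time scale $t\ll\gamma_U r/\alpha$ the integral is linear in $t$, giving the exponential law $\mathcal{P}_{\mathrm{sub}}$ with mean $F_{\mathrm{sub}}Q_U(0)^{\beta(|U|-1)}$. (II) If $\beta(|U|-1)=1$ the integrand is $1/u$, and the antilog produces $(1-Cx)^{(1-C)/C}$ with $C=\alpha/(|U|B^{-(|U|-1)}+\alpha)$, the cut-off at $x=1/C$ being the rescaled time at which the fluid queue hits zero. (III) If $\beta(|U|-1)>1$ the integral diverges at $u=0$, so the hazard explodes in a vanishing window before $\gamma_U r/\alpha$, and $\tau_{1_V}/(\gamma_U r/\alpha)\to 1$ in probability, giving the Dirac law $\delta_1$.

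\textbf{Main obstacle.}
The substantive work is to make the averaging principle quantitative uniformly in $r$: couple the process to the frozen-queue birth--death chain on short time intervals and control the accumulated error; and, separately, analyse the boundary layer near $Q_U=0$, which is irrelevant in the subcritical regime but produces the additive $c-\rho_U$ term in $F_{\mathrm{cr}}^{-1}$ in the critical regime and supplies the \emph{entire} contribution in the supercritical regime. The assumption $\beta'>\beta+1$ is used precisely to ensure that both the error in the "race" (of order $r^{-(\beta'-\beta)}$ per excursion) and the duration of the final cascade to $1_V$ (of order $r^{-\beta'}$) are negligible relative to $\mathbb{E}_{1_U}[\tau_{1_V}]$ in all three regimes; stochastic fluctuations of $Q_U(t)$ around its fluid limit are controlled by standard martingale estimates.
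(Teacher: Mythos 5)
The paper does not prove Theorem~\ref{teoremapaper1}; it is recalled verbatim from \cite[Theorem 1.7]{BdHNS18} (which in turn builds on \cite{BdHNT19}), so there is no in-paper proof to compare your proposal against. I will therefore comment on the proposal on its own terms.

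Your reduction is sound and reproduces every stated constant. The count $K(t)$ of active $U$-nodes is indeed a birth--death chain with up-rate $(|U|-k)g_U(Q_U(t))$ and down-rate $k$ while no $V$-node is active, and the frozen-$q$ mean hitting time $T_{|U|\to 0}\sim g_U(q)^{|U|-1}/|U|$ is correct (a short induction on $T_{|U|-k}\sim k!\,g_U(q)^{k}/[\,|U|(|U|-1)\cdots(|U|-k)\,]$ shows the sum is dominated by $T_1=g_U(q)^{|U|-1}/|U|$). Plugging $g_U(q)=Bq^{\beta}$ gives $\mu(q)=|U|B^{-(|U|-1)}q^{-\beta(|U|-1)}$, and the adiabatic formula $\mathbb{P}_{1_U}(\tau_{1_V}>t)\approx\exp\bigl(-\int_0^t\mu(Q_U(s))\,ds\bigr)$ with $Q_U(s)=\gamma_U r-(c-\rho_U)s$ then yields precisely $e^{-x}$ in the subcritical case, $(1-Cx)^{(1-C)/C}$ in the critical case (using $1/C-1=(1-C)/C$ with $C=\alpha F_{\mathrm{cr}}$), and $\delta_1$ in the supercritical case, with means $F_{\mathrm{sub}}Q_U(0)^{\beta(|U|-1)}$, $F_{\mathrm{cr}}Q_U(0)$, $F_{\mathrm{sup}}Q_U(0)$. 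The identification of $\beta'>\beta+1$ as the condition making the $V$-cascade negligible after the first all-$U$-inactive instant is also correct, and matches how the paper uses this hypothesis (cf.\ Lemma~\ref{blockedU}).

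One inaccuracy in your last paragraph: you attribute the additive $c-\rho_U$ in $F_{\mathrm{cr}}^{-1}$ to a ``boundary layer near $Q_U=0$.'' In your own derivation that term already emerges from the hazard integral --- the $\alpha=c-\rho_U$ enters as the fluid drain rate and is carried through the substitution, producing the exponent $(1-C)/C$ and the cutoff $x=1/C$; the cumulative hazard diverges as $t\uparrow\gamma_U r/\alpha$, so in the critical regime the boundary layer $Q_U\approx 0$ is never actually reached with positive limiting probability and contributes nothing additional. The $q=0$ boundary mechanism (queue hits zero, $g_U(0)=0$ shuts off reactivation, $U$ empties deterministically) is what governs the supercritical regime exclusively. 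As you note yourself, the genuine work is making the adiabatic/averaging step quantitative, particularly in the critical regime where the queue drains on the same $O(r)$ scale as $\mathbb{E}_{1_U}[\tau_{1_V}]$ (the saving grace being that the regeneration cycle --- return of $K$ to $|U|$ --- is $O(1)$, so the queue is still ``slow'' with respect to the regenerative structure). The framework of \cite{FMNS15}, which the present paper invokes elsewhere, is built to handle exactly the subcritical (exponential) case; the critical case requires separate treatment of the time-inhomogeneity.
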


\noindent
Theorem~\ref{teoremapaper1} shows that there is a \emph{trichotomy} (see Fig.~\ref{fig:trichotomy}): depending on the value of $\beta$ the transition exhibits a \textit{subcritical regime}, a \textit{critical regime} and a \textit{supercritical regime}. A heuristic for the critical value is the following: the fraction of joint inactivity time of the nodes in $U$ is of order $(1/r^{\beta})^{|U|} = r^{-\beta |U|}$; since the time it takes to leave the joint inactivity state is of order $r^{-\beta}$, all nodes in $U$ become simultaneously inactive for the first time after a period of order $r^{-\beta}/r^{-\beta |U|}= r^{\beta(|U|-1)}$. Our goal is to extend Theorem~\ref{teoremapaper1} to arbitrary bipartite graphs (see Fig.~\ref{fig:ex} for examples). Note how the mean transition time depends on the actual value of the initial queue lengths at nodes in $U$: for complete bipartite graphs, the initial queue lengths are fixed to be $\gamma_U r$; for arbitrary bipartite graphs, we will see how the mean transition time depends on the way the queue lengths change while activating nodes in $V$.

\begin{figure}[htbp]
\begin{minipage}{0.25\linewidth}
\includegraphics[keepaspectratio=true,width=1\textwidth]{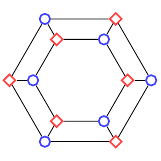}
\end{minipage}
\hfill
\begin{minipage}{0.25\linewidth}
\includegraphics[keepaspectratio=true,width=0.85\textwidth]{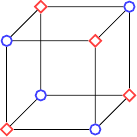}
\end{minipage}
\hfill
\begin{minipage}{0.25\linewidth}
\includegraphics[keepaspectratio=true,width=0.9\textwidth]{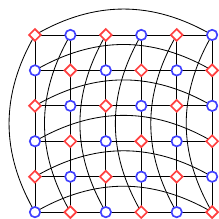}
\end{minipage}
\vspace{0.5cm}
\caption{\small Examples of bipartite graphs: cyclic ladder (left), hypercube (center), even torus (right).}
\label{fig:ex}
\end{figure}

\begin{definizione}{\bf [Key notions 2]} \label{defgeneral}
$\mbox{}$\\
{\bf (1) Neighbors of a node.} 
For a node $v \in V$, we define the set of \textit{neighbors} of $v$ as
\begin{equation} \label{neighbors}
N(v) = \{ u \in U\colon\, uv \in E \}
\end{equation}
and the degree of $v$ as
\begin{equation}
d(v) = |N(v)|.
\end{equation}
{\bf (2) Updated queue lengths.} 
Let $Q_U = \{ Q_u \}_{u \in U}$ be the sequence of queues associated with the nodes in $U$, and $Q_V = \{ Q_v \}_{v \in V}$ the sequence of queues associated with the nodes in $V$. Put $Q=(Q_U, Q_V)$, and let $Q^{k}= (Q_U^{k},Q_V^{k})$ be the pair of sequences representing the \textit{updated queue lengths} after $k$ nodes in $V$ have been activated (see Definition~\ref{updatedql} later for more details).\\
{\bf (3) Transition time and forks.} 
We denote by $\mathcal{T}_{G}^{Q}$ the \textit{transition time} of the graph $G$, i.e., \eqref{transition}, conditional on the initial queue lengths $Q=(Q_U,Q_V)$. It represents the time $\tau_{1_V}$ it takes to hit configuration $1_V$ starting from configuration $1_U$. Given a node $v \in V$, we refer to the {\it fork} of $v$ as the complete bipartite subgraph of $G$ containing only node $v$, its neighbors $N(v) \subseteq U$ and the edges between them. We talk about a $d$-fork when $d(v) = d$ with $d \in \mathbb{N}$. \\
{\bf (4) Nucleation times.}
We denote by $\mathcal{T}_v^Q$ the \textit{nucleation time} of the fork of $v$ conditional on the state of the queues $Q$. It represents the time it takes for the fork of $v$ to deactivate $N(v)$ and activate $v$ and it can be seen as the transition time of the complete bipartite subgraph of $G$ represented by the fork of $v$. Note that, for $v,w \in V$, $\mathcal{T}_v^Q $ and $\mathcal{T}_w^Q$ are dependent random variables when $N(v) \cap N(w) \neq \emptyset$.
\end{definizione}
The difference of wording between transition and nucleation is chosen in order to distinguish between the full transition of $G$ and the successive nucleations of the forks (subgraphs of $G$) of the activating nodes in $V$.


\subsection{Key idea and outline}
\label{ss:outline}

The key idea behind the present paper is to define a \textit{randomized greedy algorithm} that allows us to identify the set of paths $\mathcal{A}$ the network is mostly likely to follow while deactivating the nodes in $U$ and activating the nodes in $V$. We label the nodes in $V$ based on their first activation and we denote by $a^*$ the path that the network follows, i.e., $a^*= \{ v_1^*, \dots, v_N^* \}$ with $v_1^*$ the first node that activates and $v_N^*$ the last. Let $\mathcal{E}(a^*)$ denote the event that one of the paths in $\mathcal{A}$ occurs. We will prove that 
\begin{equation}
\lim_{r\to\infty} \mathbb{P}_{1_U}(\mathcal{E}(a^*)) =1.
\end{equation} 
In particular, we will show that if we condition on the event
\begin{equation}
A_a = \{ \text{the network follows path } a \in \mathcal{A} \},
\end{equation}
then we are able to identify how the mean transition time $\mathbb{E}_{1_U}[\mathcal{T}_{G}^{Q} \,|\, A_a]$ depends on the sequence of nucleation times of the forks of the nodes in $V$, ordered as in the path $a$ (Theorem~\ref{mostlikelypaths} below). We derive the asymptotics of the mean transition time as $r \to \infty$ (Theorem~\ref{meantransitiontime} below) and identify the law of the transition time on the scale of its mean (Theorem~\ref{law} below). To do so, we determine how the queue lengths change along the given path (Theorem~\ref{qlthm} below). Similarly as for the complete bipartite graph in Theorem~\ref{teoremapaper1}, we distinguish between three regimes for the value of $\beta$ (subcritical, critical and supercritical), in which the queues behave differently and, consequently, so does the transition time. 

\paragraph{Outline.}
The remainder of the paper is organized as follows. In Section~\ref{sec:alg} we introduce the algorithm, show that it has two important properties -- \emph{greediness} and \emph{consistency} -- and give an example of how it works. In Section~\ref{sec:theorems} we state our main theorems. In particular, we show how both the mean transition time and the law of the transition time on the scale of its mean can be determined according to the path that the algorithm chooses. In Section~\ref{sec:4} we show how the nucleation times depend on the graph structure and we analyze how the queue lengths at the nodes change along each path that the algorithm chooses. In Section~\ref{sec:algprop} we provide the proof of the two algorithm properties mentioned above. In Section~\ref{sec:thproof} we prove our main theorems. In Appendix A, we show some technical computations for the mean nucleation time in the special setting of independent forks competing for activation.


\section{Algorithm}
\label{sec:alg}

In this section we introduce a randomized algorithm that describes, step by step, how the network behaves while deactivating the nodes in $U$ and activating the nodes in $V$. The presentation is organized into a series of definitions and lemmas. In Section~\ref{subsec:proc} we define how the algorithm works iteratively. In Section~\ref{subsec:prop} we show that the algorithm is greedy and consistent (Propositions~\ref{greedinessprop}--\ref{consistencyprop} below). In Section~\ref{subsec:mot} we explain how the algorithm is used to capture the nucleation of the forks.  An example of a bipartite graph and how the algorithm acts on it are given in Section~\ref{subsec:example}.


\subsection{Definition of the algorithm}
\label{subsec:proc}

The algorithm takes as input the bipartite graph $G = ((U,V), E)$ and gives as output a sequence of triples that is needed to characterise the transition time, namely, 
\begin{equation}
\label{algoutput}
G \rightarrow (Y_k, \bar{d}_k, n_k)_{k=1}^{N}, 
\end{equation}
where $Y_k$ is a random variable with values in $\{1, \dots, N \}$ describing the index of the node in $V$ selected at step $k$, $\bar{d}_k \in \mathbb{N}$ is the degree of the selected node in the graph remaining after $k-1$ steps, and $n_k \in \mathbb{N}$ is a parameter that counts how many possibilities there are at step $k$ to choose the next node in $V$ (uniformly at random) from the remaining nodes with least degree. Sometimes we will write $v_k^*$ instead of $v_{Y_k}$ to emphasise that the network is following a specific order while activating the nodes. The integer $N = |V|$ represents the number of iterations of the algorithm.

\begin{definizione}{\bf [Algorithm]} 
\label{defYk}
Set $G= G_1= ((U_1,V_1),E_1)$. For $k=1, \dots, N$, given $G_k= ((U_k,V_k),E_k)$, find $G_{k+1}=((U_{k+1},V_{k+1}),E_{k+1})$ by iterating the following procedure until $V_{k+1}$ is empty:
\begin{itemize}
\item Start from the graph $G_k$.
\item Look at the nodes in $V_k$ and at their \emph{minimum degree} $\bar{d}_{k}$ in $G_k$. 
\item Pick a node in $V_k$ \emph{uniformly at random} from the ones with minimum degree $\bar{d}_{k}$. 
\item Denote the chosen node by $v_k^*$ and the \emph{number of choices} by $n_{k}$.
\item Eliminate the node $v_k^*$, all its neighbors in $U_k$, together with all their edges. Denote the resulting bipartite graph by $G_{k+1}$.
\end{itemize}
\end{definizione}

We next introduce the notion of admissible paths, which will be relevant to computing the transition time of the graph with the help of the above algorithm.

\begin{definizione}{\bf[Paths and admissible paths]}
Define a \textit{path} $a = (v_1, \dots, v_N)$ as a sequence of activating nodes in $V$, where each node is present exactly once. The set of paths, denoted by $\Omega$, is the set of every possible ordering of the nodes in $V$. Let $\mathcal{A}$ be the set of \textit{admissible paths} defined as the set of all the paths generated by the algorithm with positive probability. If $a^*$ is the path followed by the network, we define $A_a$ to be the event that the network follows the admissible path $a \in \mathcal{A}$ as
\begin{equation}
\label{networkfollowspatha}
A_a = \{ a^* = a \}.
\end{equation}
\end{definizione}

We are now ready to define the transition time along an admissible path.

\begin{definizione}{\bf [Transition time along an admissible path]}
Consider an admissible path  $a=(v_1,\ldots,v_N) \in \mathcal{A}$. The transition time along path $a$ is denoted by $\mathcal{T}_G^{Q} \,|\, A_a $ and it is defined as the transition time of $G$ conditional on the order of activating nodes being as in $a$ and on the initial queue lengths $Q$.
\end{definizione}

The idea of eliminating step by step the nodes in $U$ that deactivate comes from the fact that when a node in $V$ activates, it ``blocks'' all its neighbors in $U$, which with high probability will remain inactive for the rest of the time. This is due to the aggressiveness of the nodes in $V$ compared to the nodes in $U$ (recall \eqref{aggress1}--\eqref{beta'}). 

\begin{lemma}{\bf [Activation sticks]} 
\label{blockedU}
Consider a node $u \in U$ and let $N(u) \subseteq V$ be the set of neighbors of $u$. Denote by $t_u$ the first time a node $v \in N(u)$ activates. Then, with high probability $u$ remains inactive after $t_u$ for the duration of the transition, i.e., $X_u(t) = 0$ for all $t_u \leq t \leq T_G^Q$.
\end{lemma}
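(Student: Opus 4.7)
The plan is to exploit the scale separation built into \eqref{aggress1}--\eqref{beta'}: activation rates at nodes in $V$ are of order $r^{\beta'}$, activation rates at nodes in $U$ are of order $r^{\beta}$, and the gap $\beta'-\beta>1$ is strictly larger than the polynomial order (at most $r$) of the transition time $\tau_{1_V}$. Heuristically, once any $v\in N(u)$ has activated at time $t_u$, its fork will keep the neighbourhood of $u$ occupied with high probability, and in the rare time windows in which all of $N(u)$ are simultaneously inactive some $v'\in N(u)$ wins the race against $u$ with overwhelming probability.

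First, introduce the random ``opportunity set''
\[
\mathcal{W}=\bigl\{t\in[t_u,\tau_{1_V}]\colon X_{v'}(t)=0 \text{ for all } v'\in N(u)\bigr\},
\]
outside of which the hard-core constraint already prevents $u$ from activating; hence it suffices to bound the probability that $u$'s activation clock, which ticks at rate $g_U(Q_u(t))$, fires somewhere in $\mathcal{W}$. Next, one shows that with $\mathbb{P}^r$-probability tending to $1$ there exist constants $0<c_V<C_U<\infty$ such that $Q_u(t)\le C_U r$ and $Q_{v'}(t)\ge c_V r$ for every $v'\in N(u)$ and every $t\in[t_u,\tau_{1_V}]$. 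The upper bound on $Q_u$ follows from the compound Poisson input process together with the crude a priori estimate $\tau_{1_V}=O(r)$ (which, by comparison with the complete bipartite case in Theorem~\ref{teoremapaper1}, holds in every regime). The lower bound on $Q_{v'}$ follows by noting that $Q_{v'}$ can decrease only while $v'$ is active and that the fraction of $[t_u,\tau_{1_V}]$ spent active by each $v'$ is bounded strictly below $\gamma_V/c$. Consequently, on $\mathcal{W}$, $g_U(Q_u(t))=O(r^{\beta})$ while $\sum_{v'\in N(u)}g_V(Q_{v'}(t))=\Omega(r^{\beta'})$.

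Second, decompose $\mathcal{W}$ into its maximal subintervals. At the beginning of each such subinterval the strong Markov property and the elementary competing-exponentials calculation give that the probability that $u$'s clock rings before some $v'\in N(u)$ reactivates is at most
\[
\frac{g_U(Q_u)}{g_U(Q_u)+\sum_{v'\in N(u)}g_V(Q_{v'})}=O\bigl(r^{\beta-\beta'}\bigr).
\]
The number of such subintervals is bounded by the total number of deactivations of the finitely many $v'\in N(u)$ during $[t_u,\tau_{1_V}]$, each deactivating at rate $1$, so this number is $O(\tau_{1_V})=O(r)$ with high probability. A union bound then yields
\[
\mathbb{P}^r\bigl(u\text{ activates in }[t_u,\tau_{1_V}]\bigr)=O\bigl(r\cdot r^{\beta-\beta'}\bigr)=O\bigl(r^{1-(\beta'-\beta)}\bigr)=o(1),
\]
using $\beta'-\beta>1$ from \eqref{beta'}.

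The main obstacle will be the uniform lower bound $Q_{v'}(t)\ge c_V r$ on the full interval $[t_u,\tau_{1_V}]$: close to the absorbing configuration $1_V$ the nodes in $N(u)$ may each have accumulated active times comparable to $r$, and in principle their queues could drain. The resolution will be to couple this bound with the greedy ordering produced by the algorithm of Section~\ref{sec:alg}: whenever we invoke the lower bound, either the fork of $v'$ has not yet nucleated and $Q_{v'}\approx\gamma_V r$ by monotonicity of the inactive queue, or there is at least one not-yet-drained competitor in $N(u)$ whose activation rate still dominates $g_U(Q_u)$ by a factor of order $r^{\beta'-\beta}$. Once this point is dealt with, the rest of the argument is the scale-separation computation driven by \eqref{beta'}.
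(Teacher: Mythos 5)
Your proof takes essentially the same route as the paper's. Both arguments exploit the rate separation $\beta' > \beta + 1$ from \eqref{aggress1}--\eqref{beta'}: in each opportunity window where $u$ could activate, the competing-exponentials computation gives $u$ only a probability of order $r^{\beta-\beta'}$ of winning, and the $O(r)$ such windows over the transition are dominated by the per-window advantage of order $r^{\beta'-\beta}$. The paper phrases the aggregation as the probability that a single worst-case neighbor $v$ wins every competition, raising the per-race probability to the power $Cr$ to get $\mathbb{P}(Z_v<Z_u)^{Cr} \approx e^{-C' r^{1-(\beta'-\beta)}}\to 1$; you instead sum over the maximal subintervals of the opportunity set $\mathcal{W}$ via a union bound. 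The two formulations are equivalent given $\beta'-\beta>1$, and your explicit definition of $\mathcal{W}$ (all of $N(u)$ inactive) is a modest refinement of the paper's worst-case reduction to a single active neighbor, but it does not change the substance of the argument.

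You correctly identify the delicate point: the uniform lower bound $Q_{v'}(t)\ge c_V r$ on $[t_u,\tau_{1_V}]$, needed so that $\sum_{v'\in N(u)} g_V(Q_{v'}(t)) = \Omega(r^{\beta'})$. The paper handles this just as informally: it asserts that the $V$-queues ``remain of order $r$,'' noting that if some $Q_{v'}$ were to drop to $o(r)$ then the transition must already be in the supercritical regime, in which case the competition argument is no longer needed because the transition time is pinned down by the draining of the $U$-queues. Your sketched resolution (``fork not yet nucleated'' versus ``a not-yet-drained competitor'') gestures in the same direction but is not completed, and your quantitative criterion that the active fraction of $v'$ is ``bounded strictly below $\gamma_V/c$'' is not the right drift condition: $Q_{v'}$ drains at rate $c-\rho_V$ while active, so the relevant comparison involves $\rho_V/c$ and the actual length of $[t_u,\tau_{1_V}]$, not $\gamma_V/c$. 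That said, the paper itself leaves this point at the heuristic level, so your proposal matches the paper's proof in both strategy and level of rigor, and the minor imprecision in the drift bookkeeping does not affect the conclusion once the intended scaling is restored.
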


The above lemma will be proved in Section~\ref{subsec:prtwolem} and ensures that the transition along an admissible path can be decomposed into a succession of nucleations associated with the nodes in the path.

\begin{remark}{[\bf Good behavior]} \label{gbremark}
Recall from \cite{BdHNS18} that the queue lengths at nodes in $U$ all have a \textit{good behavior}, in the sense that the queue length at $u \in U$ stays close to its mean until one of the neighbors in $V$ activates or until time $T_U(r) =  \frac{\gamma_U}{c-\rho_U} r \, [1+o(1)]$, $r \to \infty$, the expected time it takes for the queue lengths at nodes in $U$ to hit zero. More precisely, for any $u \in U$, if we denote by $T_{u}^{\mathrm{gb}}$ the minimum between these two times, for $\delta > 0$ small enough and for all $t \in [0, T_{u}^{\mathrm{gb}}]$,
\begin{equation}
\lim_{r \to \infty} \mathbb{P}_{1_U}\bigg(\mathbb{E}_{1_U}[Q_u(t)] - \delta r \leq Q_u(t) \leq \mathbb{E}_{1_U}[Q_u(t)] + 2\delta r \bigg) = 1.
\end{equation}
Since the queue lengths at nodes in $U$ always remain of order $r$ while subcritical or critical nodes are activated, it follows that, for any $u \in U$ and for all $t \in [0, T_{u}^{\mathrm{gb}}]$,
\begin{equation} \label{gbeq}
Q_u(t) =  \mathbb{E}_{1_U}[Q_u(t)]\,[1 + o(1)], \qquad r \to \infty,
\end{equation}
which means that the queue length is always close to its mean for all times smaller than $T_{u}^{\mathrm{gb}}$.
\end{remark}

Any statement involving the transition time or the nucleation times holds for typical values of the queue lengths, i.e., for values compatible with \textit{good behavior}. With the above remark in mind, in accordance with Theorem~\ref{teoremapaper1}, we now define the mean nucleation times associated with the nodes of an admissible path.

\begin{definizione}{\bf [Nucleation times associated with an admissible path]} \label{defnucleationtime}
Suppose that the algorithm generates the admissible path $(v_1^*,\ldots,v_N^*)$. Associated with each step $k$ of the algorithm is the nucleation time $\mathcal{T}_{v_k^*}^{Q^{k-1}}$ of the fork of node $v_k^*$ (see Definition~\ref{defgeneral}), which satisfies, for any $u \in U_k$,
\begin{equation} \label{nucleationalg}
\mathbb{E}_{1_U} [\mathcal{T}_{v_k^*}^{Q^{k-1}}] = F^k \, (Q_u^{k-1})^{1 \wedge \beta(\bar{d}_k-1)}\, [1+o(1)],  \qquad r \to \infty.
\end{equation}
Here $F^k$ is a pre-factor that depends on the degree $\bar{d}_k$, which plays the role of $|U|$ in Theorem~\ref{teoremapaper1}, and on its relation with $\beta$. The term $Q_u^{k-1}$ represents the updated queue length at node $u \in U_k$ in the subgraph $G_k$ and plays the role of the initial queue lengths in Theorem~\ref{teoremapaper1}.
\end{definizione}
Note that \eqref{nucleationalg} is an asymptotic statement about sequences of random variables (see the following remark). The notation $o(1) = o_{\mathbb{P}_{1_U}}(1)$ refers to a random variable determined by the law $\mathbb{P}_{1_U}$ that goes to 0 in distribution as $r \to \infty$. Similarly, for $\alpha > 0$, the notation $o(r^{\alpha}) = o_{\mathbb{P}_{1_U}}(r^{\alpha})$ refers to a random variable determined by the law $\mathbb{P}_{1_U}$ that goes to 0 in distribution when divided by $r^{\alpha}$ as $r \to \infty$. Throughout the paper we write $o(1)$ and $o(r^{\alpha})$ to simplify the notation.

\begin{remark}{\bf[Conditioning]}
\label{conditionalrmk}
Every time we consider the transition time or the nucleation times we are conditioning on the state of the queues, hence all the expectations should be interpreted as conditional expectations. While the initial queue lengths are fixed via \eqref{initialqueues}, the updated queue lengths are random. For $v \in V$, in expressions of the form $\mathcal{T}_v^{Q^{k-1}}$ with $k =1, \dots, N$, the dependence on the random updated queue lengths is indicated by the superscript $Q^{k-1}$ (note that $Q^0$ actually represents the initial queue lengths). More precisely, when we consider step $k$ of the algorithm, or the induced subgraph $G_k$, for $k = 1, \dots, N$, we are conditioning on $Q^{k-1}$ and on the first $k-1$ activating nodes. Throughout the paper for compactness we omit the specific notation for conditional random variables and conditional expectations, but the reader is encouraged to keep it in mind for the statements that follow.

\end{remark}

Intuitively, the sum of the mean nucleation times associated with an admissible path gives the mean transition time along that path. We will see in Section~\ref{subsec:nextnucl} that the pre-factors $F^k$ actually need to be adjusted by certain weights that depend on the graph structure. 


\subsection{Properties of the algorithm} 
\label{subsec:prop}

\begin{definizione}{\bf [Maximum least degree]}\label{maxleast}
Given the sequence $(\bar{d}_k)_{k=1}^N$ generated by the algorithm, let $d^* = \max_{1 \leq k \leq N} \bar{d}_k$ be the \textit{maximum least degree} of the path associated with $(\bar{d}_k)_{k=1}^N$. 
\end{definizione}

The notions of minimum degree $\bar{d}_k$ at step $k$ and maximum least degree $d^*$ can be extended also to non-admissible paths. For a general path, the degree $\bar{d}_k$ at step $k$ is the minimum degree of the remaining nodes in $V$ in the induced subgraph of $G$ obtained by removing the nodes activated in the first $k-1$ steps and their neighbors. We will show that the set of admissible paths $\mathcal{A}$ is the set of the {\it most likely} paths the network follows. The following lemma and two propositions will be proved in Section~\ref{subsec:grconst}.

\begin{lemma}{\bf [Comparing maximum least degrees of different paths]} \label{lemmaab}
Consider two different paths $a,b$ such that $a \in \mathcal{A}$ is admissible. For $k = 1,\ldots,N$, denote by $\bar{d}_{k,a}$ and $\bar{d}_{k,b}$ the minimum degrees at step $k$ in paths $a$ and $b$. Let $d_a^* = \max_{1 \leq k \leq N} \bar{d}_{k,a}$ and $d_b^* = \max_{1 \leq k \leq N} \bar{d}_{k,b}$. Then $d_a^* \leq d_b^*$.
\end{lemma}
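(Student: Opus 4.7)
The plan is to locate, inside the admissible path $a$, a single ``witness step'' $k^\ast$ that realises $d_a^\ast$, and then to show that path $b$ is forced to encounter at least the same degree bound at some step of its own.

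First I would fix $k^\ast\in\{1,\dots,N\}$ with $\bar d_{k^\ast,a}=d_a^\ast$ and set
\[
S:=V\setminus\{v_1^a,\dots,v_{k^\ast-1}^a\}, \qquad U^\ast:=U\setminus\bigcup_{j<k^\ast} N_G(v_j^a).
\]
These are the $V$- and $U$-vertices that survive into the subgraph $G_{k^\ast}^a$ which the algorithm processes at step~$k^\ast$. Because the algorithm picks a minimum-degree vertex there, every $v\in S$ satisfies $|N_G(v)\cap U^\ast|\ge d_a^\ast$. I think of $(S,U^\ast)$ as a dense core that $b$ must eventually dismantle.

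Next I would turn to path $b$ and let $\ell$ be the \emph{smallest} index for which $v_\ell^b\in S$. By minimality of $\ell$ one has $\{v_1^b,\dots,v_{\ell-1}^b\}\subseteq V\setminus S=\{v_1^a,\dots,v_{k^\ast-1}^a\}$, so the $U$-vertices eliminated during the first $\ell-1$ steps of $b$ lie in $\bigcup_{j<k^\ast}N_G(v_j^a)=U\setminus U^\ast$. Hence none of $U^\ast$ has been touched yet, $U^\ast\subseteq U_\ell^b$, and the degree of $v_\ell^b$ inside $G_\ell^b$ satisfies
\[
\bigl|N_G(v_\ell^b)\cap U_\ell^b\bigr|\;\ge\;\bigl|N_G(v_\ell^b)\cap U^\ast\bigr|\;\ge\;d_a^\ast,
\]
where the final inequality uses $v_\ell^b\in S$. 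This yields $\bar d_{\ell,b}\ge d_a^\ast$ and therefore $d_b^\ast\ge d_a^\ast$.

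The step I expect to be the main obstacle is pinning down the right witness inside $b$. Later $S$-picks are of no help, since each $S$-pick can strip vertices out of $U^\ast$ and so lower subsequent $S$-degrees below $d_a^\ast$. What makes the \emph{first} intrusion of $b$ into $S$ work is precisely the construction of $U^\ast$: every vertex outside $S$ has already been processed by the greedy algorithm, so its $G$-neighbourhood is disjoint from $U^\ast$ by definition, and therefore its $b$-elimination leaves the protecting core $U^\ast$ intact.
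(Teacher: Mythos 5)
Your proof is correct and takes a genuinely different route from the paper's.

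The paper argues by contradiction: assuming $d_a^* > d_b^*$, it constructs a chain of witness nodes $w_1, w_2, w_3, \dots$, where each $w_{i+1}$ must have been activated in $b$ strictly earlier than $w_i$ yet still had degree $\geq d_a^*$ at step $k_1^a$ of path $a$, and the finiteness of $V$ eventually forces a contradiction. This recursive construction is somewhat delicate to make airtight (one needs to track precisely which edges have or have not been removed in each path).

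Your argument is direct and, I think, cleaner. The key structural observation is that $(S,U^*)=(V_{k^*}^a,U_{k^*}^a)$ is a ``protected core'': by construction of $U^*$, every vertex in $V\setminus S$ has its $G$-neighborhood disjoint from $U^*$, so processing vertices of $V\setminus S$ in any order cannot erode $U^*$. Combined with the fact that every $v\in S$ has at least $d_a^*$ neighbors inside $U^*$ (this is exactly greediness at step $k^*$ of $a$), the first time $b$ picks a vertex of $S$ that vertex still sees a degree of at least $d_a^*$. This replaces the paper's descent argument with a single well-chosen witness step in $b$, and it makes the role of greediness fully transparent.

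One small point of care: your final deduction $\bar d_{\ell,b}\geq d_a^*$ identifies $\bar d_{\ell,b}$ with the degree of the node actually selected at step $\ell$ of $b$. The paper's prose defines $\bar d_k$ for a general path as the \emph{minimum} degree among all remaining $V$-nodes at step $k$, under which the inequality would go the wrong way. However, the paper's own proof of this lemma (the step ``when $w_1$ is activated at step $k_1^b$, it has degree $d_{k_1^b,b}(w_1)\leq d_b^*$'') also implicitly uses the ``degree of the selected node'' reading, and indeed the lemma is false under the literal ``minimum remaining degree'' reading (a non-greedy path can have a smaller maximum of remaining minimum degrees). So your interpretation matches the intended one, and your proof is sound under it; it would be worth stating this reading explicitly when writing up.
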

\noindent
In other words, given any path $b$, its maximum least degree cannot be smaller than the maximum least degree of an admissible path $a$. We will see how the maximum least degree $d^*$ determines the order of the mean transition time. Depending on how $\beta$ is related to $d^*$, we distinguish three different regimes: 
\begin{equation}
\begin{array}{lll}
&\text{\emph{subcritical:}} &\beta \in (0, \frac{1}{d^*-1}),\\
&\text{\emph{critical:}} &\beta = \frac{1}{d^*-1},\\
&\text{\emph{supercritical:}} &\beta \in (\frac{1}{d^*-1}, \infty).
\end{array}
\end{equation}

Note that $d^* =0$ means that there are no edges in the graph, while $d^*=1$ means that each node in $V$ has at most one neighbor in $U$ at the moment of its activation, which implies that the transition occurs in time $O(1)$. The most interesting scenarios to investigate occur then when $d^* \geq 2$, which will be an implicit assumption throughout the paper.

The algorithm is \emph{greedy}, in the sense that it always chooses a node that adds the least to the order of the total transition time along the path, simply because this node is likely to be the first to activate.

\begin{proposizione}{\bf [Greediness]} \label{greedinessprop}
The order of the mean transition time along any admissible path is the smallest possible.
\end{proposizione}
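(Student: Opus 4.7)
The plan is to express the mean transition time along any path $a=(v_1,\dots,v_N) \in \Omega$ as a sum of the mean nucleation times of the forks of $v_1,\dots,v_N$, with prefactors depending on the local graph structure as will be derived in Section~\ref{subsec:nextnucl}. Using Theorem~\ref{teoremapaper1} in the form \eqref{nucleationalg},
\begin{equation*}
\mathbb{E}^r[\mathcal{T}_{v_k}^{Q^{k-1}}] = F^k\,(Q_U^{k-1})^{1\wedge\beta(\bar{d}_k-1)}\,[1+o(1)], \qquad r\to\infty,
\end{equation*}
together with the fact that $Q_U^{k-1}=\Theta(r)$ for each $k$ (as will follow from Theorem~\ref{qlthm}), I would conclude that the leading order of the total mean transition time along $a$ is $r^{1\wedge\beta(d_a^{*}-1)}$, where $d_a^{*}=\max_{1\leq k\leq N}\bar{d}_k$ is the maximum least degree along $a$ (Definition~\ref{maxleast}).

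Given this reduction, the heart of the proof becomes a comparison of $d^{*}$ across paths. Here I would invoke Lemma~\ref{lemmaab}, which asserts that $d_a^{*}\leq d_b^{*}$ for every admissible $a \in \mathcal{A}$ and every alternative $b \in \Omega$. In the \emph{subcritical} regime $\beta(d^{*}-1)<1$, the exponent $d\mapsto 1\wedge\beta(d-1)$ is strictly increasing, so the inequality $d_a^{*}\leq d_b^{*}$ immediately yields optimality at leading order, with a strict gain whenever $d_a^{*}<d_b^{*}$. When $d_a^{*}=d_b^{*}$ the leading orders coincide and the comparison descends to the multiplicative prefactor obtained by summing the $F^k$ over the steps that realise the maximum; at this level, a step-by-step exchange argument on the algorithm can be used to show that the greedy path is no worse.

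In the \emph{critical} and \emph{supercritical} regimes the leading exponent saturates at $1$, and all paths contribute mean transition times of order $r$. Here the comparison must use the full weighted sum $\sum_k F^k Q_U^{k-1}$ together with the random evolution of the queue lengths from Theorem~\ref{qlthm}. My plan is an exchange argument: given a path $b$, locate the first step at which $b$ picks a node that is not of minimum degree, swap it with a later step that does, and verify via Theorem~\ref{qlthm} that the resulting sum does not increase. The argument is facilitated by the fact that $F_{\mathrm{sup}}=1/(c-\rho_U)$ does not depend on the degree, and $F_{\mathrm{cr}}$ is an explicit decreasing function of the contribution $d\,B^{-(d-1)}$, so the comparison reduces to tracking the $Q_U^{k-1}$'s.

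\textbf{Main obstacle.} The delicate point is precisely the simultaneous dependence of $F^k$, $\bar{d}_k$, and the random queue lengths $Q_U^{k-1}$ on the ordering: when a swap is performed, all three quantities change together, and one has to show that the net effect is monotone in the desired direction. This requires a coupling between the queue-length processes along the greedy path and along the alternative, which in turn relies on Lemma~\ref{blockedU} (so that activation of a node in $V$ definitively freezes its neighbors in $U$) and on the scaling supplied by Theorem~\ref{qlthm}. Once this monotonicity is established, combining it with the leading-order comparison from Lemma~\ref{lemmaab} closes the proof.
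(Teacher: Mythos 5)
Your core argument matches the paper's: invoke Lemma~\ref{lemmaab} to get $d_a^{*}\leq d_b^{*}$ for an admissible $a$ versus any $b$, then observe that the order of the mean transition time is $r^{1\wedge\beta(d^{*}-1)}$, which is non-decreasing in $d^{*}$, so the admissible path has the smallest possible order of $r$. That is in fact the entirety of the paper's proof, because the paper interprets ``shortest possible'' at the level of the order in $r$ only (its proof states this explicitly: ``in the sense that it has the smallest order of $r$ possible'').

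Where you diverge is that you go on to attempt a prefactor-level comparison when $d_a^{*}=d_b^{*}$, via an exchange argument and a coupling of queue-length processes, and you correctly flag the simultaneous dependence of $F^k$, $\bar{d}_k$, and $Q_U^{k-1}$ on the ordering as a genuine obstacle. The paper does not attempt any of this and does not claim it: indeed, Theorem~\ref{meantransitiontime} shows the prefactor in the subcritical and critical regimes is path-dependent, and Proposition~\ref{consistencyprop} only asserts that different \emph{admissible} paths give the same \emph{order}, not the same prefactor. So the extra machinery you sketch is not needed for the statement as the paper understands it, and would require substantially more work to carry through. If you restrict to the order-in-$r$ reading, your first two paragraphs already suffice, and the exchange/coupling program can be dropped.
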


\noindent
The algorithm is \emph{consistent}, in the sense that $d^*$ is unique. Different admissible paths lead to the same order of the mean transition time.

\begin{proposizione}{\bf [Consistency]} \label{consistencyprop}
All the admissible paths lead to the same order of the mean transition time.
\end{proposizione}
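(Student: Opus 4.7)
The plan is to reduce consistency to the symmetric application of Lemma \ref{lemmaab}, then read off the leading order of the mean transition time from the nucleation-time formula \eqref{nucleationalg} and the maximum least degree $d^{*}$.

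First I would observe that if $a, b \in \mathcal{A}$ are two admissible paths, then Lemma \ref{lemmaab} applies in both directions: taking $a$ admissible and $b$ arbitrary gives $d_{a}^{*} \leq d_{b}^{*}$, and reversing the roles (since $b$ too is admissible) gives $d_{b}^{*} \leq d_{a}^{*}$. Hence $d_{a}^{*} = d_{b}^{*} =: d^{*}$, so the maximum least degree is an invariant of $\mathcal{A}$ rather than of the specific admissible path. This is the structural content on which the rest of the argument hinges.

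Next I would decompose the mean transition time along an admissible path $a = (v_{1}^{*}, \dots, v_{N}^{*})$ as a weighted sum of the mean nucleation times in \eqref{nucleationalg}, one for each step $k$. The exponent $1 \wedge \beta(\bar{d}_{k} - 1)$ in \eqref{nucleationalg} is non-decreasing in $\bar{d}_{k}$, so any step with $\bar{d}_{k} < d^{*}$ contributes only at a lower asymptotic order in $r$, and the dominant contribution to $\mathbb{E}^{r}[\mathcal{T}_{G}^{Q} \mid A_{a}]$ comes from the step(s) at which $\bar{d}_{k} = d^{*}$. Since the initial queue lengths are of order $r$ by \eqref{initialqueues}, and Theorem \ref{qlthm} (to be stated in Section \ref{sec:theorems}) will show that the updated queue lengths $Q_{U}^{k-1}$ remain of order $r$ throughout, each dominant nucleation is itself of order $r^{1 \wedge \beta(d^{*}-1)}$. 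Combining with the fact that $d^{*}$ is path-independent on $\mathcal{A}$, every admissible path produces a mean transition time of the same leading order $r^{1 \wedge \beta(d^{*}-1)}$, which is exactly the claim of the proposition.

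The main obstacle, and the step I would need to handle with care, is justifying that the updated queue lengths $Q_{U}^{k-1}$ scale as $r$ uniformly along every admissible path. Different admissible paths can traverse different intermediate subgraphs $G_{k}$, so in principle the distributions of the queues that play the role of the initial queues in Theorem \ref{teoremapaper1} at step $k$ may differ. However, using the sandwich $\gamma_{V} r \leq Q_{w}(0) \leq \gamma_{U} r$ from \eqref{initialqueues}, the linear drift of queues during active/inactive periods, and the fact that each intermediate nucleation lasts a time of order at most $r^{1 \wedge \beta(d^{*}-1)}$, one obtains uniform two-sided bounds of the form $Q_{U}^{k-1} = \Theta(r)$ with $\mathbb{P}^{r}$-probability tending to $1$. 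This, together with the invariance $d_{a}^{*} = d^{*}$ from Lemma \ref{lemmaab}, concludes the plan.
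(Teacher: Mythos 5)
Your proposal is correct and takes essentially the same route as the paper: the core step is the symmetric application of Lemma~\ref{lemmaab} to conclude $d_a^* = d_b^*$ for any two admissible paths, after which the order of the mean transition time follows from the $d^*$-dependence in the nucleation-time exponent. The paper's proof is simply the two-sentence observation that Lemma~\ref{lemmaab} gives equality for any pair of admissible paths; your additional care about the updated queue lengths staying of order $r$ is sound for the subcritical and critical regimes (and is handled in Theorem~\ref{qlthm}), though in the supercritical regime the queues drop to $o(r)$ after the first supercritical nucleation and the order-$r$ transition time there comes instead from the hitting time of zero.
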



\subsection{Structure of the algorithm}
\label{subsec:mot}

It is intuitive that a node in $V$ activates because it is the one whose complete bipartite fork has the fastest nucleation. Note that this depends on the activation and deactivation Poisson clocks, and on the queue length processes. We will see in Theorem~\ref{mostlikelypaths} below that, with high probability, the network follows an admissible path. Hence a node in $V$ activates because it is the one whose complete bipartite fork has the fastest nucleation among the nodes with minimum degree.

\begin{definizione}{\bf [Next nucleation time]} \label{mintime}
On the event that the network follows one of the admissible paths, given that $k-1$ nodes in $V$ have already been activated, define the time the network subsequently takes to activate the $k$-th node in $V$ by
\begin{equation}
\label{nntminimum}
\bar{\tau}_k = \mathrm{min}_{v \in V_k} \mathcal{T}_{v}^{Q^{k-1}},
\end{equation}
where $V_k$ is the set of inactive nodes in $V$ produced by the algorithm after $k-1$ iterations.
\end{definizione}

Note that if we condition on the event $A_a$ that the network follows the admissible path $a = (v_1, \dots, v_N) \in \mathcal{A}$, then the $k$-th activating node $a_k$ is the node that realizes the minimum in \eqref{nntminimum}, and its nucleation time is $\mathcal{T}_{a_k}^{Q^{k-1}} = \bar{\tau}_k$. By keeping track of which nodes have been picked, in Section~\ref{subsec:uql} we will compute the updated queue lengths after each activation in $V$, of which we are now able to give a precise definition (recall Definition~\ref{defgeneral}(2)).

\begin{definizione}{\bf [Updated queue lengths]} \label{updatedql}
On the event that the network follows one of the admissible paths, for $k = 1, \dots, N$, define the \textit{updated queue lengths} $Q^{k-1}$ at step $k$ after $k-1$ nodes have been activated by
\begin{equation}
Q^{k-1} = \left(Q_U^{k-1}, Q_V^{k-1}\right) = \left(\{Q_u^{k-1}\}_{u \in U}, \{Q_v^{k-1}\}_{v \in V}\right),
\end{equation}
where 
$$\left\{Q_u^{k-1}\right\}_{u \in U} = \left\{Q_u\left(\sum_{l=1}^{k-1} \bar{\tau}_l \right) \right\}_{u \in U}$$ 
and 
$$\left\{Q_v^{k-1}\right\}_{v \in V} = \left\{Q_v\left(\sum_{l=1}^{k-1} \bar{\tau}_l \right) \right\}_{v \in V}$$ 
are vectors that represent the updated queue lengths at nodes in $U$ and $V$, respectively.
\end{definizione}

When a node in $V$ activates, its fork can be of three different types depending on how its degree is related to $\beta$. 

\begin{definizione}{\bf [Subcritical, critical and supercritical nodes]}
\label{def:regimes}
Given that $k-1$ nodes in $V$ have already been activated, consider the $k$-th activating node and its fork of degree $\bar{d}_k$. 
\begin{itemize}
\item If $\beta(\bar{d}_k-1) <1$, then the node (or its fork) is subcritical.
\item If $\beta(\bar{d}_k-1) =1$, then the node (or its fork) is critical.
\item If $\beta(\bar{d}_k-1) >1$, then the node (or its fork) is supercritical.
\end{itemize}
\end{definizione}

In the subcritical and the critical regime, the nucleation time of a node from Definition~\ref{mintime} is with high probability a minimum over the nodes with least degree in $V_k$. Indeed, nodes with least degree activate first with high probability. The following lemma will be proved in Section~\ref{subsec:prtwolem}.

\begin{lemma}{\bf [Activation selects low degree]}
\label{lemmavw}
For $k = 1, \dots, N$, given $v,w \in V_k$ such that $d_k(w) > d_k(v)= \bar{d}_k$ and $\beta(\bar{d}_k-1) \leq 1$, the conditional probability of $w$ activating before $v$ satisfies
\begin{equation}
\lim_{r \to \infty} \mathbb{P}_{1_U} \big(\mathcal{T}_w^{Q^{k-1}} < \mathcal{T}_v^{Q^{k-1}}\big) = 0.
\end{equation}
\end{lemma}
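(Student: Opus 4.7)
The plan is to compare the marginal laws of the two nucleation times supplied by Theorem \ref{teoremapaper1} via the asymptotic identity \eqref{nucleationalg}, and then to deduce the desired conclusion from a simple union bound, so that the dependence between $\mathcal{T}_v^{Q^{k-1}}$ and $\mathcal{T}_w^{Q^{k-1}}$ generated by shared neighbors in $N(v)\cap N(w)$ never enters the argument. Write $\bar d=\bar d_k$, $d_w=d_k(w)>\bar d$, and $q=Q_U^{k-1}$, and split the argument according to which of the two subcases allowed by $\beta(\bar d-1)\le 1$ applies.

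In the subcritical case $\beta(\bar d-1)<1$, formula \eqref{nucleationalg} yields $\mathbb{E}^r[\mathcal{T}_v^{Q^{k-1}}]=\Theta\bigl(q^{\beta(\bar d-1)}\bigr)$ and $\mathbb{E}^r[\mathcal{T}_w^{Q^{k-1}}]=\Theta\bigl(q^{1\wedge\beta(d_w-1)}\bigr)$; since $d_w>\bar d$ the exponent for $w$ strictly exceeds that for $v$, so $\mathbb{E}^r[\mathcal{T}_v^{Q^{k-1}}]/\mathbb{E}^r[\mathcal{T}_w^{Q^{k-1}}]\to 0$ as $r\to\infty$. For arbitrary $M>0$ the union bound gives
\[
\mathbb{P}^r\bigl(\mathcal{T}_w^{Q^{k-1}}<\mathcal{T}_v^{Q^{k-1}}\bigr)\le\mathbb{P}^r\bigl(\mathcal{T}_w^{Q^{k-1}}<M\,\mathbb{E}^r[\mathcal{T}_v^{Q^{k-1}}]\bigr)+\mathbb{P}^r\bigl(\mathcal{T}_v^{Q^{k-1}}>M\,\mathbb{E}^r[\mathcal{T}_v^{Q^{k-1}}]\bigr).
\]
By Theorem \ref{teoremapaper1} applied to the fork of $v$, the second summand converges to the tail $\int_M^\infty \mathcal{P}_v(z)\,dz$ of the appropriate limit density and can be made arbitrarily small by choosing $M$ large. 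By Theorem \ref{teoremapaper1} applied to the fork of $w$, the first summand converges to the mass assigned by the limit density of $\mathcal{T}_w^{Q^{k-1}}/\mathbb{E}^r[\mathcal{T}_w^{Q^{k-1}}]$ to $\{0\}$; this mass is $0$ in every regime for $w$, because none of $\mathcal{P}_{\mathrm{sub}}$, $\mathcal{P}_{\mathrm{cr}}$, $\mathcal{P}_{\mathrm{sup}}$ charges the origin.

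In the critical case $\beta(\bar d-1)=1$, we have $\beta(d_w-1)>1$, so $w$ is supercritical; both means are of order $q$, with prefactors $F_{\mathrm{cr}}^v$ for $v$ and $F_{\mathrm{sup}}=1/(c-\rho_U)$ for $w$. The key algebraic identity, read off from the definition $C_v=F_{\mathrm{cr}}^v(c-\rho_U)$ in Theorem \ref{teoremapaper1}, is $F_{\mathrm{cr}}^v/C_v=F_{\mathrm{sup}}$, which identifies the right endpoint of the support of the limit law of $\mathcal{T}_v^{Q^{k-1}}/q$ with the concentration point of $\mathcal{T}_w^{Q^{k-1}}/q$. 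For $\delta>0$ set $c_\delta=(F_{\mathrm{sup}}-\delta)q$; then
\[
\mathbb{P}^r\bigl(\mathcal{T}_w^{Q^{k-1}}<\mathcal{T}_v^{Q^{k-1}}\bigr)\le\mathbb{P}^r\bigl(\mathcal{T}_w^{Q^{k-1}}<c_\delta\bigr)+\mathbb{P}^r\bigl(\mathcal{T}_v^{Q^{k-1}}\ge c_\delta\bigr).
\]
The first term vanishes by the Dirac concentration of $\mathcal{T}_w^{Q^{k-1}}/q$ at $F_{\mathrm{sup}}$; the second converges, by the critical tail in Theorem \ref{teoremapaper1}, to $\bigl(1-C_v(F_{\mathrm{sup}}-\delta)/F_{\mathrm{cr}}^v\bigr)^{(1-C_v)/C_v}=(\delta/F_{\mathrm{sup}})^{(1-C_v)/C_v}$, which vanishes as $\delta\downarrow 0$.

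The principal obstacle is the critical case, where both mean nucleation times have the same order $\Theta(q)$ and the comparison rests on the exact algebraic matching between the critical and supercritical prefactors of Theorem \ref{teoremapaper1}. Throughout, following Remark \ref{conditional2}, everything is understood conditional on $Q^{k-1}$ and on the first $k-1$ activations; since the estimates are uniform in $q$ once $q=\Theta(r)$, the randomness of the updated queue lengths does not interfere with the argument.
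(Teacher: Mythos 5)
Your proof is correct and takes essentially the same route as the paper's. In the subcritical case your union bound with threshold $M\,\mathbb{E}^r[\mathcal{T}_v^{Q^{k-1}}]$ mirrors the paper's truncation argument on the ratios $\mathcal{X}_{v,\cdot}^{k-1}$, and in the critical case you reach the same structural fact the paper invokes from \cite{BdHNS18} --- that the support endpoint of the critical law coincides with the Dirac concentration point of the supercritical law --- but you derive it explicitly from the identity $F_{\mathrm{cr}}^v/C_v = 1/(c-\rho_U) = F_{\mathrm{sup}}$ in Theorem~\ref{teoremapaper1}, which makes that step more self-contained than the paper's terse citation.
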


In the supercritical regime the situation is more delicate. If at step $k$ the least degree fork has degree $\bar{d}_k$ such that $\beta(\bar{d}_k-1) >1$, then the mean nucleation time of the next activating fork is the same for all the remaining forks in the graph. The network does not distinguish between the nodes according to their degree anymore, since all possibilities contribute equally to the total mean transition time. Indeed, we know from Theorem~\ref{teoremapaper1} that the mean nucleation time is given by the expected time it takes for the queues in $U$ to hit zero. Hence, after the nucleation of the first supercritical fork, all the queues in $U$ are of order $o(r)$ and the transition occurs very fast (see Section~\ref{subsec:uql} for more details). 

In Section~\ref{sec:theorems} we will see how the transition time can be determined given the set of admissible paths. Moreover, we will identify the mean transition time along each path and its law on the scale of its mean. Given a path, we know in which order the nodes activate. In Section~\ref{sec:thproof} we will see how we can identify the nucleation time of a node given in Definition~\ref{mintime} with the nucleation time of the complete bipartite fork of the activating node, as written in \eqref{nucleationalg}. The sum of all the nucleation times gives us the transition time of the graph. Not all the terms in the sum contribute significantly in the limit as $r \to \infty$. We will need to identify which are the leading order terms. The answer depends on the sequence of degrees $(\bar{d}_{k})_{k=1}^N$ generated by the algorithm and on how the queue lengths change along the path.


\subsection{Example}
\label{subsec:example}

Consider the bipartite graph $G=((U,V),E)$ with $|U| = 6$ and $|V| = 4$ in Fig.~\ref{fig:switch1}. This graph serves as a simple example of how the algorithm works.

\vspace{0.2cm}
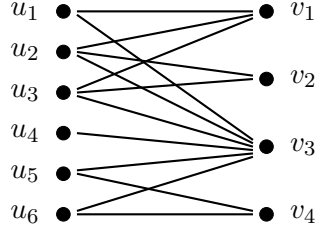
\begin{figure}[htbp]
\begin{center}
\begin{tikzpicture}[scale = 0.9]
\draw[fill] (0,0) circle (0.1);
\draw[fill] (0,1.8) circle (0.1);
\draw[fill] (0,2.4) circle (0.1);
\draw[fill] (0,3) circle (0.1);
\draw[fill] (0,0.6) circle (0.1);
\draw[fill] (0,1.2) circle (0.1);
\draw[fill] (3,0) circle (0.1);
\draw[fill] (3,1) circle (0.1);
\draw[fill] (3,2) circle (0.1);
\draw[fill] (3,3) circle (0.1);
\node [left] at (-0.2,0) {$u_6$};
\node [left] at (-0.2,0.6) {$u_5$};
\node [left] at (-0.2,1.2) {$u_4$};
\node [left] at (-0.2,1.8) {$u_3$};
\node [left] at (-0.2,2.4) {$u_2$};
\node [left] at (-0.2,3) {$u_1$};
\node [right] at (3.2,0) {$v_4$};
\node [right] at (3.2,1) {$v_3$};
\node [right] at (3.2,2) {$v_2$};
\node [right] at (3.2,3) {$v_1$};
\draw[thick] (0.2,3) -- (2.8,3);
\draw[thick] (0.2,2.95) -- (2.8,1.1);
\draw[thick] (0.2,2.45) -- (2.8,2.95);
\draw[thick] (0.2,2.4) -- (2.8,2.05);
\draw[thick] (0.2,2.35) -- (2.8,1.05);
\draw[thick] (0.2,1.85) -- (2.8,2.9);
\draw[thick] (0.2,1.8) -- (2.8,2);
\draw[thick] (0.2,1.75) -- (2.8,1);
\draw[thick] (0.2,1.2) -- (2.8,0.95);
\draw[thick] (0.2,0.65) -- (2.8,0.9);
\draw[thick] (0.2,0.6) -- (2.8,0.05);
\draw[thick] (0.2,0.05) -- (2.8,0.85);
\draw[thick] (0.2,0) -- (2.8,0);
\end{tikzpicture}
\end{center}
\caption{{\small The initial bipartite graph $G= G_1= ((U_1,V_1), E_1)$.}}
\label{fig:switch1}
\end{figure}

\medskip\noindent
\underline{{\bf $k=1$}}. 
We start with $G= G_1= ((U_1,V_1),E_1)$. There are two nodes $v_2, v_4$ with minimum degree $\bar{d}_1 = 2$, so $n_1 = 2$. Pick uniformly at random one of them (with probability $\frac{1}{n_1} = \frac{1}{2}$), say $Y_1 = 2$. Eliminate node $v_2$, all its neighbors $u_2, u_3$, and all their edges $u_2v_1, u_2v_2, u_2v_3, u_3v_1, u_3v_2, u_3v_3$. Denote the new bipartite graph by $G_2=((U_2,V_2),E_2)$. The nucleation time associated with this node satisfies, for any $u \in U_1$,
\begin{equation} \label{1}
\mathbb{E}_{1_U} [\mathcal{T}_{v_{Y_1}}^{Q^0}] = \mathbb{E}_{1_U} [\mathcal{T}_{v_2}^{Q^0}] = F^1\, (Q_u^0)^{1 \wedge \beta}\, [1+o(1)], \qquad r \to \infty.
\end{equation}

\medskip\noindent
\underline{{\bf $k=2$}}.
Node $v_1$ has the minimum degree $\bar{d}_2 = 1$, so $Y_2 = 1$. Eliminate node $v_1$, all its neighbors, and all their edges. Denote the new bipartite graph by $G_3=((U_3,V_3),E_3)$. The nucleation time associated with this node satisfies, for any $u \in U_2$,
\begin{equation} \label{2}
\mathbb{E}_{1_U} [\mathcal{T}_{v_{Y_2}}^{Q^1}] = \mathbb{E}_{1_U} [\mathcal{T}_{v_1}^{Q^1}] = F^2\, (Q_u^1)^0 \, [1+o(1)] = O(1), \qquad r \to \infty.
\end{equation}

\medskip\noindent
\underline{{\bf $k=3$}}.
Node $v_4$ has the minimum degree $\bar{d}_3 = 2$, so $Y_3 = 4$. Eliminate node $v_4$, all its neighbors, and all their edges. Denote the new bipartite graph by $G_4=((U_4,V_4),E_4)$. The nucleation time associated with this node satisfies, for any $u \in U_3$,
\begin{equation} \label{3}
\mathbb{E}_{1_U} [\mathcal{T}_{v_{Y_3}}^{Q^2}] = \mathbb{E}_{1_U} [\mathcal{T}_{v_4}^{Q^2}] = F^3\, (Q_u^2)^{1 \wedge \beta}\, [1+o(1)], \qquad r \to \infty.
\end{equation}

\medskip\noindent
\underline{{\bf $k=4$}}.
Node $v_3$ is the only node left, with degree $\bar{d}_4 = 1$, so $Y_4 = 3$. Eliminate node $v_3$, all its neighbors, and all their edges, after which the empty graph is left. The nucleation time associated with this node satisfies, for any $u \in U_4$,
\begin{equation} \label{4}
\mathbb{E}_{1_U} [\mathcal{T}_{v_{Y_4}}^{Q^3}] = \mathbb{E}_{1_U} [\mathcal{T}_{v_3}^{Q^3}] = F^4\, (Q_u^3)^0 \, [1+o(1)] = O(1), \qquad r \to \infty.
\end{equation}
The above scenario forms a path that is described by nodes in $V$ activating in the order $v_2, v_1, v_4, v_3$ (see Fig.~\ref{fig:scenario}). 

\begin{figure}[htbp] \label{switch2} 
\begin{center}
\begin{tikzpicture}[scale = 0.9]
\draw[fill] (0,0) circle (0.1);
\draw[fill] (0,0.6) circle (0.1);
\draw[fill] (0,1.2) circle (0.1);
\node at (0,1.8) {$\times$};
\node at (0,2.4) {$ \times $};
\draw[fill] (0,3) circle (0.1);
\draw[fill] (3,0) circle (0.1);
\draw[fill] (3,1) circle (0.1);
\node at (3,2) {$ \times $};
\draw[fill] (3,3) circle (0.1);
\node [left] at (-0.2,0) {$u_6$};
\node [left] at (-0.2,0.6) {$u_5$};
\node [left] at (-0.2,1.2) {$u_4$};
\node [left] at (-0.2,3) {$u_1$};
\node [right] at (3.2,0) {$v_4$};
\node [right] at (3.2,1) {$v_3$};
\node [right] at (3.2,3) {$v_1$};
\draw[thick] (0.2,3) -- (2.8,3);
\draw[thick] (0.2,2.95) -- (2.8,1.1);
\draw[thick] (0.2,1.2) -- (2.8,0.95);
\draw[thick] (0.2,0.65) -- (2.8,0.9);
\draw[thick] (0.2,0.6) -- (2.8,0.05);
\draw[thick] (0.2,0.05) -- (2.8,0.85);
\draw[thick] (0.2,0) -- (2.8,0);
\draw[fill] (6,0) circle (0.1);
\draw[fill] (6,0.6) circle (0.1);
\draw[fill] (6,1.2) circle (0.1);
\node at (6,1.8) {$\times$};
\node at (6,2.4) {$ \times $};
\node at (6,3) {$\times$};
\draw[fill] (9,0) circle (0.1);
\draw[fill] (9,1) circle (0.1);
\node at (9,2) {$ \times $};
\node at (9,3) {$\times$};
\node [left] at (5.8,0) {$u_6$};
\node [left] at (5.8,0.6) {$u_5$};
\node [left] at (5.8,1.2) {$u_4$};
\node [right] at (9.2,0) {$v_4$};
\node [right] at (9.2,1) {$v_3$};
\draw[thick] (6.2,1.2) -- (8.8,0.95);
\draw[thick] (6.2,0.65) -- (8.8,0.9);
\draw[thick] (6.2,0.6) -- (8.8,0.05);
\draw[thick] (6.2,0.05) -- (8.8,0.85);
\draw[thick] (6.2,0) -- (8.8,0);
\node at (12,0) {$ \times $};
\node at (12,0.6) {$ \times $};
\draw[fill] (12,1.2) circle (0.1);
\node at (12,1.8) {$\times$};
\node at (12,2.4) {$ \times $};
\node at (12,3) {$\times$};
\node at (15,0) {$ \times $};
\draw[fill] (15,1) circle (0.1);
\node at (15,2) {$ \times $};
\node at (15,3) {$\times$};
\node [left] at (11.8,1.2) {$u_4$};
\node [right] at (15.2,1) {$v_3$};
\draw[thick] (12.2,1.2) -- (14.8,0.95);
\end{tikzpicture}
\end{center}
\caption{{\small The sequence of bipartite graphs $G_2= ((U_2,V_2),E_2)$, $G_3= ((U_3,V_3),E_3)$, $G_4= ((U_4,V_4),E_4)$ generated by the algorithm.}}
\label{fig:scenario}
\end{figure}
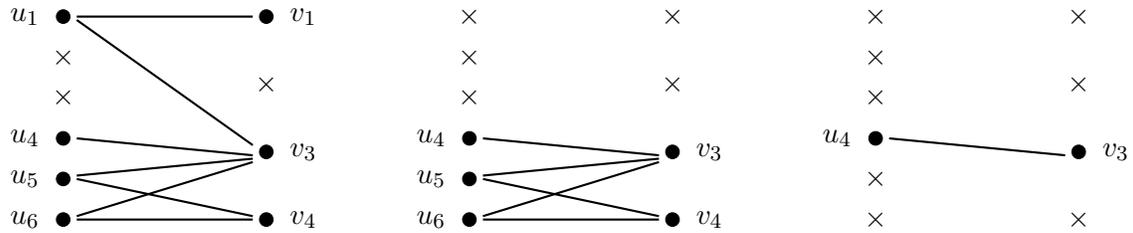

Note that the algorithm may pick node $v_4$ at the first step by setting $Y_1 = 4$, since the choice of the node with minimum degree is made uniformly at random. If so, then the algorithm follows a different path. At the first step, $Y_1 =4$ and $\mathbb{E}_{1_U} [\mathcal{T}_{v_4}^{Q^0}] =  F^1 \,(Q_u^0)^{1 \wedge \beta}\, [1+o(1)]$ for any $u \in U_1$. At the second step, $Y_2 = 2$ and $\mathbb{E}_{1_U} [\mathcal{T}_{v_2}^{Q^1}] = F^2 \,(Q_u^1)^{1 \wedge \beta}\, [1+o(1)]$ for any $u \in U_2$. At the third step, $Y_3 = 1$ and $\mathbb{E}_{1_U} [\mathcal{T}_{v_1}^{Q^2}] = O(1)$. At the fourth step, $Y_4 = 3 $ and $\mathbb{E}_{1_U} [\mathcal{T}_{v_3}^{Q^3}] = O(1)$. This choice leads to a different path, where the nodes in $V$ activate in the order $v_4, v_2, v_1, v_3$.

Each possible scenario is identified with a path in the algorithm (admissible path), described by the nodes in $V$ according to the order of their first activation. The total mean transition time along a path can be thought of as a sum of the mean nucleation times associated with each activating node in the path (see Theorem~\ref{mostlikelypaths}). We will prove in Section~\ref{subsec:grconst} that all the admissible paths lead to the \emph{same} order of the mean transition time.


\section{Transition time: main theorems}
\label{sec:theorems}

In this section we present our main theorems regarding the transition time. In Section~\ref{subsec:mostlike} we show that $\mathcal{E}$, the event that the network follows the algorithm, has high probability (Theorem~\ref{mostlikelypaths}(i) below). We analyze the contributions along a given admissible path, noting that not all the nucleation times are significant for the total mean transition time (Theorem~\ref{mostlikelypaths}(ii) below). In Section~\ref{subsec:meantr} we compute the asymptotics of the mean transition time, including the pre-factor, focusing on the significant terms only (Theorem~\ref{meantransitiontime} below). In Section~\ref{subsec:lawtr} we identify the law of the transition time divided by its mean, which turns out to be a convolution of the laws found for the complete bipartite graph in Theorem~\ref{teoremapaper1} (Theorem~\ref{law} below). There is again a trichotomy, depending on the value of $\beta$. Proofs will be given in Section~\ref{sec:thproof}.


\subsection{Most likely paths}
\label{subsec:mostlike}

Recall that $\Omega$ is the set of all possible paths and $\mathcal{A} \subseteq \Omega $ is the set of admissible paths. Denote by $\mathcal{A}_{sc}$ the subset of admissible paths \emph{truncated at the first supercritical node} (if there is any). Recall that, according to Definition~\ref{def:regimes}, a supercritical node is a node that is activated through a supercritical fork. If $a= (v_1, \dots, v_N) $ is an element of $\mathcal{A}$, then $a_{sc} = (v_1, \dots, v_{sc})$ is an element of $\mathcal{A}_{sc}$, where $v_{sc}$ denotes the last node of each truncated ordering. We allow this node to be any of the remaining supercritical nodes. 

\begin{definizione}{\bf [The network follows the algorithm]} \label{eventE}
Denote by $a^* = (v^*_1, \dots, v^*_N)$ the path followed by the network and define
\begin{equation}
\mathcal{E}(a^*) =
\Big\{ \exists \, a_{sc} = (v_1, \dots, v_{sc}) \in \mathcal{A}_{sc}\colon\, v_1 = v_1^*, \dots, v_{sc} = v^*_{sc} \Big\} 
\end{equation} 
as the event that the network follows any of the admissible paths up to the first supercritical node (if there is any).
\end{definizione}

Our first main theorem consists of three statements and shows how the algorithm helps us find the mean transition time of the network. The first statement holds for all three regimes. The second and third statements hold for the subcritical and the critical regime only (for which the network follows the algorithm until the last activating node). The idea is that the mean transition time of the network can be seen as a weighted sum of the mean nucleation times associated with each activation and of negligible terms representing the time it takes after each activation to bring the network back in the configuration with all the nodes in $U$ active. For the second statement, we denote by $\mathcal{A}_k$ the set of (partial) admissible paths until step $k$. For the third statement, recall that $A_a$ is the event that the network follows the admissible path $a \in \mathcal{A}$, as defined in \eqref{networkfollowspatha}. Note that in both the statements we take the expectation $\mathbb{E}_Q$ on the right-hand side: this averages over the random values $Q^1, \dots, Q^{N-1}$ of the updated queue lengths on which we condition via the nucleation times appearing in each term of the sums. For the supercritical regime we do not need any statement, because the mean transition time is known from \cite{BdHNS18} to be the expected time it takes for the queue lengths to hit zero.

\begin{teorema}{\bf [Most likely paths]} \label{mostlikelypaths}
Consider the bipartite graph $G = ((U,V),E)$ with initial queue lengths $Q^0=(Q_U^0,Q_V^0)$ as in \eqref{initialqueues}.
\begin{itemize}
\item[{\rm (i)}] 
With high probability the network follows the algorithm, i.e.,
\begin{equation}
\lim_{r \to \infty} \mathbb{P}_{1_U}(\mathcal{E}(a^*)) = 1.
\end{equation}
\end{itemize}
Consider $\beta \in (0, \frac{1}{d^*-1}]$: subcritical or critical regime. 
\begin{itemize}
\item[{\rm (ii)}]
With high probability the transition time of $G$ given the initial queue lengths $Q^0$ satisfies
\begin{equation}
\label{sumnucl}
\mathbb{E}_{1_U}[\mathcal{T}_G^{Q^0} \mathbbm{1}_{\mathcal{E}(a^*)}] = \mathbb{E}_Q \left[ \sum_{k=1}^N 
\sum_{\substack{i_1, \dots, i_k\colon \\ (v_{i_1},\dots, v_{i_k}) \in \mathcal{A}_k}}
\bigg(\prod_{l = 1}^k \frac{1}{n_l}\bigg) \, f_k \, \mathbb{E}_{1_U}[\mathcal{T}_{v_{i_k}}^{Q^{k-1}}
\mathbbm{1}_{\mathcal{E}(a^*)}] \right] [1+o(1)], \qquad r \to \infty,
\end{equation}
where $n_k \in \mathbb{N}$ is the number of possible nodes that the algorithm can pick at step $k$, while the factor $f_k \in (0,1)$ (to be identified in Theorem~\ref{meantransitiontime}) comes from the fact that the node activating at step $k$ is the one that activates first among the $n_k$ nodes with the same least degree. Both $n_k$ and $f_k$ depend on the sequence of nodes that have been activated before step $k$.
\item[{\rm (iii)}] 
With high probability the transition time of $G$ along an admissible path $a \in \mathcal{A}$ given the initial queue lengths $Q^0$ satisfies
\begin{equation}
\mathbb{E}_{1_U}[\mathcal{T}_{G}^{Q^0} \,|\, A_a ] = \mathbb{E}_Q \left[\sum_{k=1}^{N} f_k\,  \mathbb{E}_{1_U}[\mathcal{T}_{v_k}^{Q^{k-1}}]\right] [1+o(1)], \qquad r \to \infty.
\end{equation}
\end{itemize}
\end{teorema}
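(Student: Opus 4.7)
The three parts build on one another: (i) is a structural statement about which paths the network can realise, while (ii) and (iii) convert that structural statement into a quantitative decomposition of the mean transition time.

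\emph{Part (i).} My plan is to argue by induction on the step index $k$, showing that as long as we have not yet met a supercritical activation, the $k$-th node $v^*_k$ that activates in the network coincides with a node of minimum degree in the residual graph $G_k$ produced by the algorithm. For the base case $k=1$ I invoke Lemma~\ref{lemmavw} with $G_1=G$: any node $w$ whose degree exceeds $\bar{d}_1$ loses the race with some node of least degree with probability tending to $1$. For the inductive step, Lemma~\ref{blockedU} guarantees that all neighbours in $U$ of the node that has just activated remain inactive for the remainder of the transition, so the graph that governs future activations is exactly $G_k$; applying Lemma~\ref{lemmavw} to $G_k$ then forces $v^*_k$ to have minimum degree $\bar{d}_k$, so that some admissible $a_{sc}\in\mathcal{A}_{sc}$ matches the prefix realised by the network. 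A union bound over the at most $N$ steps and the finitely many alternative choices at each step then yields $\mathbb{P}^r(\mathcal{E}(a^*))\to1$ as $r\to\infty$.

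\emph{Part (ii).} Here $\beta\in(0,\tfrac{1}{d^*-1}]$, so no supercritical node ever appears and the algorithm governs the behaviour for all $N$ steps. I would write the telescopic decomposition
\begin{equation*}
\mathcal{T}_G^{Q^0} \;=\; \sum_{k=1}^{N} \bar{\tau}_k, \qquad \bar{\tau}_k \;=\; \min_{v\in V_k}\mathcal{T}_v^{Q^{k-1}},
\end{equation*}
and use (i) to restrict the minimum at each step, up to an event of vanishing probability, to the $n_k$ nodes of least degree $\bar{d}_k$ in $G_k$. Conditional on the prefix $(v_{i_1},\dots,v_{i_{k-1}})$, the node $v_{i_k}$ is selected as the winner of the $k$-th race with probability $1/n_k+o(1)$ by exchangeability among the competing least-degree forks (whose nucleation-time scalings are identical by Theorem~\ref{teoremapaper1}). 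Conditional on $v_{i_k}$ being the winner, the mean of $\bar{\tau}_k$ equals the mean of the minimum of $n_k$ competing nucleation times, which I would write as a factor $f_k\in(0,1)$ times the unconditional mean $\mathbb{E}^r[\mathcal{T}_{v_{i_k}}^{Q^{k-1}}]$; in the independent-fork case this is exactly the computation carried out in Appendix~A, and more generally $f_k$ is determined by the law of the minimum of the competing nucleation times. Iterating over prefixes and summing yields the double sum in \eqref{sumnucl}, with the product $\prod_{l=1}^{k}1/n_l$ arising as the probability of observing a given prefix.

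\emph{Part (iii).} This is essentially a change of probability space: having conditioned on $A_a$ we know the full sequence $(v_1,\dots,v_N)$ deterministically, so only the single prefix corresponding to $a$ survives in the double sum of \eqref{sumnucl}, and the prefix probability $\prod_{l=1}^{N}1/n_l$ is absorbed into the normalisation of the conditional expectation. What remains at each step $k$ is the factor $f_k$ (still arising from the min-over-competitors that takes place at step $k$, independently of which competitor is destined to win) multiplied by $\mathbb{E}^r[\mathcal{T}_{v_k}^{Q^{k-1}}]$, giving the announced formula.

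\emph{Main obstacle.} The delicate point will be the identification of the factor $f_k$ and its independence, to leading order in $r$, of the precise identity of the winning node. Competing forks of least degree at step $k$ may share queues in $U_k$, so their nucleation times are not independent and the distribution of the minimum does not factorise; only after appealing to Theorem~\ref{teoremapaper1} (to reduce each competing fork to the same scaling regime) and to the consistency statement of Proposition~\ref{consistencyprop} (to guarantee that the leading order does not depend on the admissible realisation chosen) can one conclude that a common factor $f_k$ suffices. Controlling the $o(1)$ error terms in the decomposition, uniformly over the finitely many admissible prefixes, is the other technical ingredient and will be treated in Section~\ref{sec:thproof}.
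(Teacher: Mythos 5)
Your part~(i) tracks the paper's own argument: Lemma~\ref{lemmavw} at each step, Lemma~\ref{blockedU} to propagate the structure, and a union bound over the finitely many steps and choices. That is correct and is essentially what the paper does, though it writes it less formally than your induction.

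For parts (ii)--(iii), however, there are two gaps. First, your telescoping decomposition $\mathcal{T}_G^{Q^0}=\sum_{k=1}^N\bar{\tau}_k$ is not exact. Between the moment $v_{k-1}^*$ activates and the moment the $k$-th nucleation attempt can be meaningfully clocked, the network must first return to the configuration with all of $U_k$ active: immediately after the $(k-1)$-th nucleation, other nodes of $U_k$ may still be inactive because they were mid-attempt. The paper's recursion is $\mathcal{T}_{G_k}^{Q^{k-1}}=\bar{\tau}_k+R_{U_{k+1}}^k+\mathcal{T}_{G_{k+1}}^{Q^k}$, and the return times $R_{U_k}^{k-1}$ are disposed of only because Lemma~\ref{returntime} shows they are $o(1)$. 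Omitting them entirely, without a reason, leaves a hole in the bookkeeping.

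Second, and more substantively, your ``main obstacle'' paragraph correctly identifies the dependence problem (overlapping least-degree forks share nodes in $U_k$, so the minimum does not factorise), but the tools you propose to resolve it -- Theorem~\ref{teoremapaper1} and Proposition~\ref{consistencyprop} -- do not do the job. Theorem~\ref{teoremapaper1} gives only the \emph{marginal} law of each fork's nucleation time, and Proposition~\ref{consistencyprop} concerns comparability of the \emph{orders} across admissible paths, not the joint law of competing nucleation times at a fixed step. The paper's mechanism is Proposition~\ref{aindep} (asymptotic independence of forks), which shows that conditioning on some shared nodes of a fork being momentarily inactive does not change the mean or the law of its nucleation time to leading order; this is what licenses treating overlapping forks as independent and transferring the Appendix~A computation of $f_k$ to the general case via Proposition~\ref{meannextnucleationtime} and Corollary~\ref{pk}. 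Likewise, your ``exchangeability'' claim for the factor $1/n_k$ is not automatic: the competing least-degree forks may have different sets of $U_k$-neighbours, and exchangeability of the joint law at leading order is exactly what Proposition~\ref{aindep} provides. Without invoking it, the identification of $f_k$ -- and hence the conclusion of (ii) and (iii) -- is not justified.
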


Theorem~\ref{mostlikelypaths} will be proved in Section~\ref{subsec:mlp}. Note that the mean transition time of the graph $G$ given the initial queue lengths $Q^0$ can be split as
\begin{equation}
\label{eqsplittransition}
\mathbb{E}_{1_U}[\mathcal{T}_{G}^{Q^0}] = \mathbb{E}_{1_U}[\mathcal{T}_{G}^{Q^0} \mathbbm{1}_{\mathcal{E}(a^*)}]  
+ \mathbb{E}_{1_U}[\mathcal{T}_{G}^{Q^0}\mathbbm{1}_{\mathcal{E}(a^*)^C}].
\end{equation}
The second term on the right-hand side represents the mean transition time when the network does \emph{not} follow the algorithm, and equals
\begin{equation}
\mathbb{E}_{1_U}[\mathcal{T}_{G}^{Q^0}\mathbbm{1}_{\mathcal{E}(a^*)^C}] 
=\mathbb{E}_{1_U}[\mathcal{T}_{G}^{Q^0} \,|\, \mathcal{E}(a^*)^C] \, \mathbb{P}_{1_U}(\mathcal{E}(a^*)^C).
\end{equation}
Even though we know from Theorem~\ref{mostlikelypaths}(i) that $\mathbb{P}_{1_U}(\mathcal{E}(a^*)^C)$ tends to zero as $r \to \infty$, a priori this term may still affect the total mean transition time, since the conditional expectation may be substantial. In what follows we focus on the first term on the right-hand side of \eqref{eqsplittransition}, since this captures the \emph{typical} behavior of the network. \textit{All the results in the present paper are conditional on the high probability event $\mathcal{E}(a^*)$ that the network follows the algorithm. We will omit the conditional notation to facilitate the reading. The reader should always keep this in mind while going through the statements and the proofs.}

We will see in Theorem~\ref{meantransitiontime} below that, in the supercritical regime, the mean transition time is the expected time it takes for the queues in $U$ to hit zero, independently of which path the network took before activating the first supercritical node. Theorem~\ref{mostlikelypaths}(ii)-(iii) give us a way, in the subcritical regime and the critical regime, to split the total mean transition time into a sum of mean nucleation times of successive forks, by taking into account all the admissible paths, each with its own probability. Namely,
\begin{equation}
\begin{split}
\mathbb{E}_{1_U}[\mathcal{T}_G^{Q^0}\mathbbm{1}_{\mathcal{E}(a^*)}] 
& =  \mathbb{E}_{1_U} \bigg[\sum_{a \in \mathcal{A}}\mathcal{T}_{G}^{Q^0} \mathbbm{1}_{A_a}  \,\big|\, \mathcal{E}(a^*)\bigg] \, \mathbb{P}_{1_U}(\mathcal{E}(a^*))\\
& = \sum_{a \in \mathcal{A}} \mathbb{E}_{1_U}[\mathcal{T}_{G}^{Q^0}  \,|\,  A_a ]\, \mathbb{P}_{1_U}(A_a \,|\, \mathcal{E}(a^*))  \, \mathbb{P}_{1_U}(\mathcal{E}(a^*)).
\end{split}
\end{equation}
The above expression allows us to compute the mean transition time along a single admissible path. Note that, the probability that the network follows any path $a \in \mathcal{A}$ conditional on the event that it follows the algorithm is given by the probability that the algorithm generates path $a$, i.e., 
\begin{equation}
\mathbb{P}_{1_U}(A_a \,|\, \mathcal{E}(a^*)) = \prod_{k=1}^{N} \frac{1}{n_k}.
\end{equation}
Recall that, by Proposition~\ref{consistencyprop}, that the order of the mean transition time does not depend on which path is followed.


\subsection{Mean of the transition time}
\label{subsec:meantr}

Consider an admissible path $a \in \mathcal{A}$ and the event $A_a$ that the network follows this path. Recall that $d^* = \max_{1 \leq k \leq N} \bar{d}_k$ is the maximum degree among the sequence of minimum degrees $(\bar{d}_k)_{k=1}^N$. Let $v_k^*$ be the $k$-th activating node in path $a$. According to Definition~\ref{defnucleationtime},  for any $u \in U_k$ the mean nucleation time $\mathbb{E}_{1_U}[\mathcal{T}_{v_k^*}^{Q^{k-1}}]$ is given by
\begin{equation} \label{nt&q}
\mathbb{E}_{1_U}[\mathcal{T}_{v_k^*}^{Q^{k-1}}] =
\left\{\begin{array}{ll} 
F^{k}_{\textrm{sub}} \,(Q_u^{k-1})^{\beta(\bar{d}_k-1)}\,[1+o(1)], &\text{ if } \beta \in \big(0, \frac{1}{\bar{d}_k-1}\big),\\[0.2cm]
F^{k}_{\textrm{cr}} \,Q_u^{k-1}\,[1+o(1)], &\text{ if } \beta = \frac{1}{\bar{d}_k-1},\\[0.2cm]
F^{k}_{\textrm{sup}} \,Q_u^{k-1}\,[1+o(1)], &\text{ if } \beta \in \big(\frac{1}{\bar{d}_k-1}, \infty\big),
\end{array}
\right. \qquad r \to \infty,
\end{equation}
where $F^{k}_{\textrm{sub}}, F^{k}_{\textrm{cr}}, F^{k}_{\textrm{sup}}$ are constants depending on $\bar{d}_k, B, c, \rho_U$. Namely,
\begin{equation}
F^{k}_{\textrm{sub}} =\frac{1}{\bar{d}_k B^{-(\bar{d}_k -1)}},
\qquad F^{k}_{\textrm{cr}} = \frac{1}{\bar{d}_k B^{-(\bar{d}_k -1)} + c-\rho_U},
\qquad F^{k}_{\textrm{sup}} = \frac{1}{ c-\rho_U}.
\end{equation} 
Note that $F^{k}_{\textrm{sub}}$ really depends on $k$, while $F^{k}_{\textrm{cr}} = \frac{1}{ (1/\beta +1) B^{-(1/\beta)} + c-\rho_U}$ is the same for every critical node, and $F^{k}_{\textrm{sup}} = F_{\textrm{sup}}$ is independent of $k$. Moreover, note that the first mean nucleation time depends on the initial queue lengths $Q_U^0$ at the nodes in $U$, but in general the mean nucleation time associated with a fork depends on the queue lengths at the nodes in $U$ at the moment the fork starts the nucleation.

Our second main theorem identifies the mean transition time along a given path. 

\begin{teorema}{\bf [Mean transition time]} 
\label{meantransitiontime}
Consider the bipartite graph $G = ((U,V),E)$ with initial queue lengths $Q^0=(Q_U^0,Q_V^0)$ as in \eqref{initialqueues}. The transition time of the graph $G$ given the initial queue lengths $Q^0$ satisfies the following.
\begin{itemize}
\item[{\rm (I)}] $\beta \in (0, \frac{1}{d^*-1})$: subcritical regime.
\begin{equation}
\mathbb{E}_{1_U}[\mathcal{T}_G^{Q^0} \,|\, A_a] = \sum_{\substack{1 \leq k \leq N \\ k: \,\bar{d}_k = d^*}} f_k\, \frac{ \gamma_U^{\beta(d^*-1)}}{d^*B^{-(d^*-1)}}\, r^{\beta(d^*-1)}\,[1+o(1)], \qquad r \to \infty,
\end{equation}
with
\begin{equation}
\label{fkdef1}
f_k = \frac{1}{n_k}.
\end{equation}
\item[{\rm (II)}] $\beta = \frac{1}{d^*-1}$: critical regime. Then
\begin{equation}
\mathbb{E}_{1_U}[\mathcal{T}_G^{Q^0} \,|\, A_a] = \sum_{\substack{1 \leq k \leq N \\ k: \,\bar{d}_k = d^*}} 
f_k \, \frac{\gamma_U^{(k)}}{d^*B^{-(d^*-1)}+c-\rho_U}\, r\, [1+o(1)], \qquad r \to \infty,
\end{equation}
with
\begin{equation}
\label{fkdef2}
f_k = \frac{ \bar{d}_k B^{-(\bar{d}_k -1)} + c-\rho_U}{n_k \bar{d}_k B^{-(\bar{d}_k -1)} + c-\rho_U}
\end{equation}
and
\begin{equation} \label{defgammaupdate}
\gamma_U^{(k)} = \gamma_U - (c-\rho_U) \sum_{\substack{1 \leq i \leq k \\ i: \, \bar{d}_i = d^*}} f'_i,
\end{equation}
where for a critical node $v_i$ the coefficient $f'_i$ is defined in a recursive way as 
\begin{equation}
f_i' = \frac{1}{n_{i} \bar{d}_{i} B^{-(\bar{d}_{i}-1)} + c-\rho_U} \, \Bigg(\gamma_U - (c-\rho_U) \sum_{\substack{1 \leq j \leq i-1 \\ j:\, \bar{d}_j = d^*}} f_j'\Bigg) > 0,
\end{equation}
\begin{equation} 
f'_1 = 
\left\{\begin{array}{ll} 
\frac{1}{n_1} \,\frac{1}{\bar{d}_1 B^{-(\bar{d}_1 -1)}} \,\gamma_U, & \text{ if } \bar{d}_1 < d^*, \\[0.2cm]
\frac{1}{n_1\bar{d}_1 B^{-(\bar{d}_1 -1)} + c-\rho_U} \, \gamma_U, & \text{ if } \bar{d_1} = d^*.
\end{array} \right.
\end{equation}
\item[{\rm (III)}] $\beta \in (\frac{1}{d^*-1}, \infty)$: supercritical regime.
\begin{equation}
\mathbb{E}_{1_U}[\mathcal{T}_G^{Q^0}] = \frac{\gamma_U}{c-\rho_U} r\, [1+o(1)], \qquad r \to \infty.
\end{equation}
\end{itemize}
\end{teorema}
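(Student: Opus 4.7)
The plan is to deduce each of the three regimes from the decomposition established in Theorem~\ref{mostlikelypaths}, combined with the per-fork asymptotics \eqref{nt&q} (which in turn come from Theorem~\ref{teoremapaper1}), and then to keep careful bookkeeping of how the queue lengths $Q_U^{k-1}$ at the nodes in $U$ evolve along an admissible path. Concretely, Theorem~\ref{mostlikelypaths}(iii) tells us that, up to $[1+o(1)]$ corrections, $\mathbb{E}^r[\mathcal{T}_G^{Q^0}|A_a] = \sum_{k=1}^N f_k\,\mathbb{E}^r[\mathcal{T}_{v_k}^{Q^{k-1}}]$, so the task reduces to (a) identifying the leading terms in this sum, (b) computing the weights $f_k$, and (c) controlling $Q_U^{k-1}$.

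For the \emph{subcritical} regime ($\beta<\tfrac{1}{d^*-1}$), each mean nucleation time is of order $(Q_U^{k-1})^{\beta(\bar d_k-1)}=O(r^{\beta(\bar d_k-1)})$, so only indices with $\bar d_k=d^*$ contribute to leading order, all other terms being negligible by a factor $r^{-\beta}$ at least. Since the cumulative time elapsed before step $k$ is of order $r^{\beta(d^*-1)}=o(r)$, the queue lengths at $U$ remain $\gamma_U r\,[1+o(1)]$ throughout the path (recall that the workload decreases at speed at most $c-\rho_U$). Plugging $Q_U^{k-1}=\gamma_U r[1+o(1)]$ into the subcritical branch of \eqref{nt&q} and summing only over $k$ with $\bar d_k=d^*$ yields the stated formula; the weight $f_k=1/n_k$ comes directly from the fact that, among the $n_k$ equally likely least-degree nodes at step $k$, the network picks the one with the smallest nucleation clock, which divides the mean by $n_k$ for the subcritical exponential-type distribution described in Theorem~\ref{teoremapaper1}(I).

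For the \emph{critical} regime ($\beta=\tfrac{1}{d^*-1}$) the same strategy applies, but now the nucleation times of $d^*$-forks are themselves of order $r$, so the queue lengths genuinely shrink between successive critical activations. I would proceed inductively along the path. Writing $Q_U^{k-1}=\gamma_U^{(k)}\,r\,[1+o(1)]$, the subcritical forks (those with $\bar d_k<d^*$) consume only an $o(r)$ amount of time and thus do not alter $\gamma_U^{(k)}$ to leading order; a critical fork at step $i$ with $\bar d_i=d^*$ has, once one accounts for the simultaneous $n_i$ competing forks, an effective mean nucleation time $f'_i\,r[1+o(1)]$ with $f'_i$ defined recursively as in the statement. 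The decrement in the queue level is $(c-\rho_U)f'_i\,r\,[1+o(1)]$, giving exactly the recursion for $\gamma_U^{(k)}$ in \eqref{defgammaupdate}. The form of the weight $f_k=(\bar d_kB^{-(\bar d_k-1)}+c-\rho_U)/(n_k\bar d_kB^{-(\bar d_k-1)}+c-\rho_U)$ is obtained by computing the mean of the minimum of $n_k$ i.i.d.\ critical nucleation times with the law $\mathcal{P}_{\mathrm{cr}}$ of Theorem~\ref{teoremapaper1}(II); a short computation using $(1-Cx)^{(1-C)/C}$ for the tail shows precisely this prefactor (this is the computation promised for Appendix A).

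For the \emph{supercritical} regime ($\beta>\tfrac{1}{d^*-1}$), by Theorem~\ref{mostlikelypaths}(i) the algorithm is followed up to the first supercritical activation, and all preceding subcritical/critical nucleations take time $o(r)$, hence leave $Q_U=\gamma_U r\,[1+o(1)]$. By the supercritical branch of \eqref{nt&q} the first supercritical nucleation has mean $\gamma_U r/(c-\rho_U)\,[1+o(1)]$ and deterministically drives the queues of its $U$-neighbors to zero; after that, Lemma~\ref{blockedU} and the aggressiveness assumption \eqref{beta'} ensure the remaining activations happen on an $o(r)$ time scale, so the total mean transition time is dominated by this single term. The main obstacle I anticipate is the critical regime: verifying rigorously that the $o(r)$-contributions from sub-critical forks really do not perturb $\gamma_U^{(k)}$, and that the minimum-over-$n_k$-clocks computation yielding $f_k$ and $f'_i$ stays consistent through the recursion, will require the detailed queue-length analysis of Theorem~\ref{qlthm} and careful use of the critical tail $\mathcal{P}_{\mathrm{cr}}$.
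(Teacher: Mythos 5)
Your proposal follows essentially the same route as the paper's proof: it applies the path decomposition of Theorem~\ref{mostlikelypaths}(iii), keeps only the $\bar d_k=d^*$ terms, tracks the queue-length recursion (the paper's Theorem~\ref{qlthm}), and obtains the weights $f_k$ from the minimum-over-$n_k$-clocks computation (Proposition~\ref{meannextnucleationtime} / Appendix~A). The only ingredient you leave implicit is that treating the $n_k$ competing forks as i.i.d.\ is justified by the asymptotic-independence result (Proposition~\ref{aindep}), since in general these forks share $U$-nodes; this should be cited explicitly before invoking the Appendix~A formulas.
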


Theorem~\ref{meantransitiontime} will be proved in Section~\ref{subsec:tr}. Both in the subcritical and the supercritical regime, Theorem~\ref{meantransitiontime} provides \emph{explicit formulas} for the mean transition time in terms of the parameters $c,\gamma_U, \rho_U$ and $B,\beta$ in our model (recall Section~\ref{ss:model}) and the sequence of numbers $(\bar{d}_k,n_k)_{k=1}^N$ that are produced by the algorithm (recall \eqref{algoutput}), with $d^* = \max_{1 \leq k \leq N} \bar{d}_k$.  In the critical regime, however, the formula is more delicate, since the pre-factor depends on how long the critical nucleations take. Indeed, $\gamma_U^{(k)}$ in \eqref{defgammaupdate} represents the mean updated queue length at nodes in $U$ at step $k$ (see Section~\ref{subsec:uql} for more details). Note that the mean transition time in the subcritical and the critical regime depends on the path, while in the supercritical regime it does not.


\subsection{Law of the transition time}
\label{subsec:lawtr}

Theorem~\ref{mostlikelypaths} shows how the mean transition time along an admissible path is a sum of terms related to the successive mean nucleation times of complete bipartite subgraphs of $G$. Theorem~\ref{meantransitiontime} tells us that, depending on the value of $\beta$, this sum reduces to a smaller sum of only a few significant terms. It also tells us how to compute the pre-factors of these terms. 

\begin{definizione}{\bf [Multiplicity of $d^*$]}
In the subcritical regime, consider an admissible path $a \in \mathcal{A}$ and its associated degree sequence $(\bar{d}_k)_{k=1}^N$. Write $m_{\mathrm{sub}}^a$ to denote the multiplicity of $d^*$ in the path $a$, i.e.,
\begin{equation} 
\label{defk1}
m_{\mathrm{sub}}^a = |\{k\colon\,\bar{d}_k = d^*\}|.
\end{equation}
\end{definizione}

Our third main theorem identifies the law of $\mathcal{T}_G^{Q^0 }/\mathbb{E}_{1_U}[\mathcal{T}_G^{Q^0}]$. Recall the laws $\mathcal{P}_{\mathrm{sub}},\mathcal{P}_{\mathrm{cr}},\mathcal{P}_{\mathrm{sup}}$ introduced in \eqref{teoremapaper1}. Write $\circledast$ to denote convolution.

\begin{teorema}{\bf [Law of the transition time]} \label{law}
Consider the bipartite graph $G = ((U,V),E)$ with initial queue lengths $Q^0=(Q_U^0,Q_V^0)$ as in \eqref{initialqueues}.
The transition time of the graph $G$ given the initial queue lengths $Q^0$ satisfies the following.
\begin{itemize}

\item[{\rm (I)}] $\beta \in (0, \frac{1}{d^*-1})$: subcritical regime. With $f_k$ as in \eqref{fkdef1} and $m_{\mathrm{sub}}^a$ as in \eqref{defk1},
\begin{equation} \label{lawtransitiontime1}
\lim_{r \to \infty} \mathbb{P}_{1_U} \bigg( \frac{\mathcal{T}_G^{Q^0}}{\mathbb{E}_{1_U} [\mathcal{T}_G^{Q^0} \,|\, A_a]} > x  \,\big|\,  A_a \bigg) = \int_x^{\infty} \bigg(\circledast_{k = 1}^{m_{\mathrm{sub}}^a} \mathcal{P}_{\mathrm{sub}}^{f_{k}, S_{\mathrm{sub}}^a}\bigg) (y) dy, \qquad x \in [0,\infty),
\end{equation}
with 
\begin{equation}
\mathcal{P}_{\mathrm{sub}}^{f_k, S_{\mathrm{sub}}^a}(z) =  \frac{S_{\mathrm{sub}}^a}{f_k} \exp\left(-\frac{S_{\mathrm{sub}}^a}{f_k}\,z\right), 
\qquad z \in [0,\infty),
\end{equation} 
and $S_{\mathrm{sub}}^a=\sum_{i \colon\,\bar{d}_{i}=d^*} f_{i}$.

\item[{\rm (III)}] $\beta \in (\frac{1}{d^*-1}, \infty)$: supercritical regime.

\begin{equation} 
\lim_{r \to \infty} \mathbb{P}_{1_U} \bigg( \frac{\mathcal{T}_G^{Q^0}}{\mathbb{E}_{1_U} [\mathcal{T}_G^{Q^0}]}> x \bigg) = \int_x^{\infty} \mathcal{P}_{\mathrm{sup}}(y) \,dy = \left\{\begin{array}{ll}
1, &\text{ if } x \in [0,1), \\[0.2cm]
0, &\text{ if } x \in [1,\infty),
\end{array}
\right.
\qquad x \in [0,\infty),
\end{equation}
with 
\begin{equation}
\mathcal{P}_{\mathrm{sup}}(z) = \delta_1 (z), \qquad z \in [0,\infty),
\end{equation}
where $\delta_1(z)$ is the Dirac function at 1.
\end{itemize}
\end{teorema}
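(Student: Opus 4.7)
The plan is to handle the subcritical and supercritical regimes separately, reducing each to the single-fork limit laws of Theorem \ref{teoremapaper1}.

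\textbf{Subcritical regime.} First I would combine Theorem \ref{mostlikelypaths}(iii) with Theorem \ref{meantransitiontime}(I) to reduce the transition time along $a \in \mathcal{A}$, to leading order, to the sum of the waiting times $\bar{\tau}_k$ at the $m_{\mathrm{sub}}^a$ steps $k$ with $\bar{d}_k = d^*$; steps with $\bar{d}_k < d^*$ contribute $O(r^{\beta(\bar{d}_k-1)}) = o(r^{\beta(d^*-1)})$ and may be dropped. At such a step, Definition \ref{mintime} and Lemma \ref{lemmavw} identify $\bar{\tau}_k$ with the minimum of $n_k$ competing $d^*$-fork nucleations, each distributed as $\mathcal{T}_v^{Q^{k-1}}$. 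By Theorem \ref{teoremapaper1}(I), each such nucleation time divided by its mean $F_{\mathrm{sub}}^k\,(Q_U^{k-1})^{\beta(d^*-1)}$ converges in law to $\mathrm{Exp}(1)$; hence the minimum of $n_k$ asymptotically i.i.d.\ copies, divided by the same mean, converges to $\mathrm{Exp}(n_k)$. Equivalently, $\bar{\tau}_k$ divided by $f_k F_{\mathrm{sub}}^k (Q_U^{k-1})^{\beta(d^*-1)}$ converges in law to $\mathrm{Exp}(1)$ with $f_k = 1/n_k$.

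\textbf{Asymptotic independence and assembly of the convolution.} The main obstacle is establishing that the rescaled waiting times $(\bar{\tau}_k)_{k:\,\bar{d}_k=d^*}$ are asymptotically independent, since $\bar{\tau}_k$ depends on the random updated queue lengths $Q_U^{k-1}$, which themselves depend on earlier steps. Here I would invoke Theorem \ref{qlthm} and the queue-length concentration of Remark \ref{gbremark} to replace $Q_U^{k-1}$ by its deterministic leading-order value $\gamma_U r\,[1+o(1)]$, uniformly in $k$; this is legitimate because the entire subcritical transition lasts $o(r)$, so the queues drain only negligibly. With the queues effectively frozen and the forks of successive activating nodes disjoint (by the elimination step of the algorithm), the memoryless property of the activation and deactivation Poisson clocks decouples the steps. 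Normalizing by the total mean $\mathbb{E}^r[\mathcal{T}_G^{Q^0}\mid A_a]$, which is proportional to $S_{\mathrm{sub}}^a$, each summand becomes exponential with mean $f_k/S_{\mathrm{sub}}^a$, whose density is exactly $\mathcal{P}_{\mathrm{sub}}^{f_k, S_{\mathrm{sub}}^a}$. The density of the sum is, by definition, the convolution $\circledast_{k=1}^{m_{\mathrm{sub}}^a} \mathcal{P}_{\mathrm{sub}}^{f_k, S_{\mathrm{sub}}^a}$, and integrating this over $[x,\infty)$ yields the claimed survival function.

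\textbf{Supercritical regime.} Here Theorem \ref{meantransitiontime}(III) identifies $\mathbb{E}^r[\mathcal{T}_G^{Q^0}]$ with the deterministic drain time $(\gamma_U/(c-\rho_U))\,r[1+o(1)]$. The argument would proceed by noting that, for the first supercritical fork to activate, Theorem \ref{teoremapaper1}(III) gives that its nucleation time divided by its mean converges in law to $\delta_1$, i.e., to $1$ in probability; all nucleations preceding this one are subcritical and contribute $o(r)$, while all nucleations following it take place on queues of size $o(r)$ (the first supercritical nucleation having drained them) and hence also contribute $o(r)$. Therefore $\mathcal{T}_G^{Q^0}/\mathbb{E}^r[\mathcal{T}_G^{Q^0}] \to 1$ in probability, which is exactly the $\delta_1$ law claimed.
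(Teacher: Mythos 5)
Your subcritical argument is in the spirit of the paper's proof and gets the decomposition right, but you derive the exponential law of each $\bar{\tau}_k$ by treating it as the minimum of $n_k$ ``asymptotically i.i.d.'' copies of a $d^*$-fork nucleation time. This step needs justification: at a fixed step $k$ the $n_k$ competing forks may share nodes in $U$, so they are not independent, and their asymptotic independence is exactly the content of Proposition~\ref{aindep} (and the resulting exponential law of $\bar{\tau}_k$ is Proposition~\ref{lawnextnucleationtime}, proved via the metastability framework of \cite{FMNS15}). The paper invokes these directly rather than taking a minimum of nearly-independent exponentials. Your route can be made to work once Proposition~\ref{aindep} is cited, and your subsequent assembly of the convolution (freezing $Q_U^{k-1}\approx\gamma_U r$, decoupling successive steps by memorylessness, and matching the resulting rates to $\mathcal{P}_{\mathrm{sub}}^{f_k,S_{\mathrm{sub}}^a}$) matches the paper's.

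The supercritical argument, however, has a genuine gap. You assert that ``all nucleations preceding this one are subcritical and contribute $o(r)$.'' That need not hold: in the supercritical regime $\beta(d^*-1)>1$, there may well be steps $k$ before the first supercritical node with $\bar{d}_k<d^*$ but $\beta(\bar{d}_k-1)=1$, i.e.\ \emph{critical} nucleations. These take time of order $r$ with a non-degenerate limiting law (Theorem~\ref{teoremapaper1}(II)), so they cannot be dropped. The reason $\mathcal{T}_G^{Q^0}/\mathbb{E}^r[\mathcal{T}_G^{Q^0}]\to 1$ anyway is not that earlier steps are negligible, but that the critical steps drain the queues, so the subsequent supercritical nucleation is correspondingly shorter; the fluctuations cancel because the total time up to the supercritical nucleation is, to leading order, the deterministic drain time of the queues at the relevant nodes in $U$. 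This is precisely the paper's argument: the mean is the drain time $T_U=[1+o(1)]\,\gamma_U r/(c-\rho_U)$, and by the good-behavior concentration (Remark~\ref{gbremark}) $\mathbb{P}^r(\mathcal{T}_G^{Q^0}>T_U)\to 0$, which gives the Dirac law. Your phrasing would need to be replaced by this drain-time concentration argument to be correct in general.
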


Theorem~\ref{law} will be proved in Section~\ref{subsec:law}. There we will also see why there is no statement for the critical regime (II).


\subsection{Discussion}

Analyzing the transition time for arbitrary bipartite graphs is much harder than for complete bipartite graphs. The key idea is to view the transition time as a sum of subsequent nucleation times for complete bipartite subgraphs. The order in which nodes activate in $V$ is \emph{random}, because it depends on the fluctuations of the activation rates via the queue lengths. However, with high probability the nodes with the least number of active neighbors in $U$ activate first. After each activation, the underlying bipartite graph changes according to which node is activated and which nodes are deactivated. Hence the subsequent activations in $V$ depend on how this graph changes, as well as on the evolution of the network, since the queue lengths (and hence the activation rates) change with time as well. 

To keep track of this evolution, we defined a randomized greedy algorithm in Section~\ref{sec:alg}. If we run the algorithm once, then it generates a specific path of activating nodes in $V$. This is enough to determine the leading order of the transition time as $r \to \infty$, since it only depends on the maximum least degree $d^*$, which is the same for all the admissible paths. Moreover, given $d^*$, we can immediately determine whether we are in the subcritical, the critical or the supercritical regime. If we are interested in the pre-factor of the mean transition time and in its law, then we need to generate all the admissible paths. Theorem~\ref{mostlikelypaths} shows that we can split the mean transition time into a weighted sum over all the admissible paths of the mean nucleation times associated with each activation in the paths. Theorem~\ref{meantransitiontime} gives the mean transition time conditional on the path and shows that the outcome is non-trivial both in the subcritical and the critical regime.
Theorem~\ref{law} gives the law conditional on the path, but fails to capture the critical regime. The reason is that there are intricate dependencies between the subsequent nucleation times along the path.


\section{Nucleation times and queue lengths}
\label{sec:4}

In Section~\ref{subsec:aindep} we introduce the concept of asymptotic independence of forks and we show that in the subcritical and critical regime competing forks can be treated as if they were independent, in the limit as $r \to \infty$ (Proposition~\ref{aindep}). In Section~\ref{subsec:nextnucl} we study the mean and the law of the next nucleation time by using concepts from metastability and results from Section~\ref{subsec:aindep} (Propositions~\ref{meannextnucleationtime} and \ref{lawnextnucleationtime} below). In Section~\ref{subsec:uql} we show how the queue lengths change depending on which node activates in $V$ (Theorem~\ref{qlthm} below). Throughout the section, recall the notation discussed in Remark~\ref{conditionalrmk}.

\subsection{Asymptotic independence of forks} 
\label{subsec:aindep}

In this section we show that, in the limit as $r \to \infty$, forks can be treated as being independent of each other even when they share some nodes. We introduce the concept of \textit{asymptotic independence} of forks, which allows us to treat overlapping forks as if they were independent in the limit as $r \to \infty$. We show that the nucleation time of a fork is asymptotically not influenced by the behavior of other forks sharing nodes with it.

In \cite{BdHNS18} it is shown that, as soon as all the nodes in $U$ of a complete bipartite graph are simultaneously inactive, the first node in $V$ (and subsequently all the others nodes) activate in a very short time interval, negligible compared to the time it takes to deactivate all the nodes in $U$. Hence, the time it takes for the nodes in $U$ to be all simultaneously inactive is the same as the time it takes to activate the first node in $V$, up to an error term that is negligible as $r \to \infty$. In our setting, to study the nucleation times of forks it is enough to study the time it takes to deactivate all their respective nodes in $U$, without considering the set $V$.

\begin{proposizione}{\bf [Asymptotic independence]} 
\label{aindep}
In the subcritical or critical regimes, consider the graph $G_k$, the updated queue lengths $Q^{k-1}$ and the $\bar{d}_k$-fork $W$, where $\bar{d}_k$ is the minimum degree of the nodes in $V_k$. Let $\{u_1, \dots,  u_{\bar{d}_k} \}$ denote the subset of nodes in $U_k$ belonging to fork $W$. For $\alpha < \bar{d}_k$, consider the first time $t = t(\alpha) \in \big[ \sum_{j=1}^{k-1} \bar{\tau}_j, \sum_{j=1}^{k} \bar{\tau}_j \big)$ when $\alpha$ nodes $\{s_1, \dots, s_{\alpha} \}  \subset \{u_1, \dots,  u_{\bar{d}_k} \}$ belonging to fork $W$ are simultaneously inactive. Recall that $T_{W}^{Q^{k-1}}$ is the time it takes $W$ to nucleate, conditional on the updated queue lengths $Q^{k-1}$, and denote by $T_{W_{\alpha}}^{Q(t)}$ the time it takes $W$ to nucleate starting from time $t$ with $\alpha$ nodes inactive, conditional on the updated queue lengths $Q(t)$. The two following statements hold.
\begin{itemize}
\item[(i)] Starting at time $t$ from a state with $\alpha$ nodes inactive,
\begin{equation}
\mathbb{E}_{1_U} [T_{W_{\alpha}}^{Q(t)}] = \mathbb{E}_{1_U} [T_{W}^{Q^{k-1}} ] \, [1+o(1)], \qquad r \to \infty.
\end{equation}
\item[(ii)] Starting at time $t$ from a state with $\alpha$ nodes inactive,
\begin{equation}
\lim_{r \to \infty} \mathbb{P}_{1_U} (T_{W_{\alpha}}^{Q(t)} > x ) = \lim_{r \to \infty} \mathbb{P}_{1_U} (T_{W}^{Q^{k-1}} > x), \qquad x \in [0, \infty).
\end{equation}
\end{itemize}
\end{proposizione}

\begin{proof} We prove the two statements separately.
\begin{itemize}
\item[(i)]
We denote by $\mathcal{S}$ the event that after time $t$ all the nodes of $W$ that are still active become simultaneously inactive before any of the inactive nodes in $ \{s_1, \dots, s_{\alpha} \}$ activates again. We know from Theorem~\ref{teoremapaper1} that the time $T_{\mathcal{S}}$ it takes for $\bar{d}_k - \alpha$ nodes to become simultaneously inactive behaves as an exponential random variable with mean of order $r^{\beta(\bar{d}_k - \alpha-1)}$ as $r \to \infty$, while the time it takes for one of the $\alpha$ inactive nodes to activate is an exponential random variable with mean of order $1/r^{\beta}$. Hence the probability of $\mathcal{S}$ is of order $r^{-\beta(\bar{d}_k - \alpha)} = o(1)$. If $\mathcal{S}$ occurs, then clearly $T_{W_{\alpha}}^{Q(t)}  = T_{\mathcal{S}}$, and hence we have
\begin{equation}
\mathbb{E}_{1_U} [T_{W_{\alpha}}^{Q(t)}  \,|\, \mathcal{S}] = O\big(r^{\beta(\bar{d}_k - \alpha -1)}\big) = o\big(r^{\beta(\bar{d}_k-1)}\big), \qquad r \to \infty.
\end{equation}
On the other hand, if the complementary event $\mathcal{S}^C$ occurs, then the expected time $t'$ for the network to reach the configuration with all the nodes $u_1, \dots, u_{\bar{d}_k}$ active is $o(1)$, and from there it takes time $\mathbb{E}_{1_U} [T_{W}^{Q(t+t')}]$ for $W$ to nucleate. Hence 
\begin{equation}
\mathbb{E}_{1_U} [T_{W_{\alpha}}^{Q(t)} \,|\, \mathcal{S}^C] = o(1)+\mathbb{E}_{1_U} [T_{W}^{Q(t+t')}] = o(1)+\mathbb{E}_{1_U} [T_{W}^{Q^{k-1}}] \, [1+o(1)], \qquad r \to \infty,
\end{equation}
since, in the subcritical or critical regime, the queue lengths at time $t+t'$ are of the same order as the queue lengths at time $\sum_{j=1}^{k-1} \bar{\tau}_j$.
Putting the two complementary events together, we obtain that
\begin{equation}
\begin{split}
\mathbb{E}_{1_U} [T_{W_{\alpha}}^{Q(t)}] & = \mathbb{E}_{1_U} \big[T_{W_{\alpha}}^{Q(t)}   \,|\,\mathcal{S}\big]\, \mathbb{P}_{1_U}(\mathcal{S}) 
+ \mathbb{E}_{1_U} \big[T_{W_{\alpha}}^{Q(t)}  \,|\,  \mathcal{S}^C\big]\, 
\mathbb{P}_{1_U}(\mathcal{S}^C)\\
&= o\big(r^{\beta(\bar{d}_k-1)}\big)o(1) + \big(o(1) + \mathbb{E}_{1_U} [T_{W}^{Q^{k-1}}]\, [1+o(1)]\big)\,[1-o(1)] \\
&= \mathbb{E}_{1_U} [T_{W}^{Q^{k-1}}] \, [1+o(1)], \qquad r \to \infty.
\end{split}
\end{equation}
\item[(ii)]
Using the complementary events $\mathcal{S}$ and $\mathcal{S}^C$, we can write, for all $x \geq 0$,
\begin{equation}
\begin{split}
\lim_{r \to \infty} \mathbb{P}_{1_U} (T_{W_{\alpha}}^{Q(t)} > x )  = & \lim_{r \to \infty} \mathbb{P}_{1_U} (\{T_{W_{\alpha}}^{Q(t)} > x \} \cap \mathcal{S} ) +  \lim_{r \to \infty} \mathbb{P}_{1_U} (\{T_{W_{\alpha}}^{Q(t)} > x \} \cap \mathcal{S}^C  )  \\
=& \lim_{r \to \infty} \mathbb{P}_{1_U} (T_{W}^{Q^{k-1}} > x),
\end{split}
\end{equation}
since $\lim_{r \to \infty} \mathbb{P}_{1_U}(\mathcal{S}) = 0$ and, conditional on $\mathcal{S}^C$, with high probability the network reaches the initial configuration in a negligible time $t'$ after time $t$. Hence it behaves as if at time $t$ all nodes in $U$ were active.
\end{itemize}
\end{proof} 

The above proposition shows that, in the limit as $r \to \infty$, the mean nucleation time of a fork $W$ and its law are not influenced by the fact that some of its nodes are simultaneously inactive at some time. The intuition is that, as $r \to \infty$, the nucleation of a fork is so hard to achieve and takes so long that sharing some nodes with other forks does not help to make the nucleation happen appreciably faster. The network tends to quickly reach the metastable initial configuration with all the nodes in $U$ active, and hence the nucleation time of $W$ can be seen as the time it takes to deactivate all the nodes in $U$ starting from all of them being active. In particular, in case of overlapping forks, the nucleation time of $W$ is not influenced by the behavior of other forks sharing nodes with $W$.

\subsection{Next nucleation time}
\label{subsec:nextnucl}

Given the graph $G_k$, consider the next nucleation time 
\begin{equation}
\bar{\tau}_k = \mathrm{min}_{v \in V_k} \{\mathcal{T}_{v}^{Q^{k-1}} \}
\end{equation} 
from Definition~\ref{mintime}. When the network activates a node, it activates the node that completes the fastest nucleation among the $n_k$ nodes with least degree. We want to find an expression for $\mathbb{E}_{1_U}[\bar{\tau}_k]$. 

In Appendix A we show the computations for the mean next nucleation time in the case when the competing forks are independent of each other. Recall that in the subcritical regime we are considering a minimum of nucleation times that are exponential random variables, while in the critical regime we are considering a minimum of nucleation times that follow a truncated polynomial law (see Theorem~\ref{teoremapaper1}). By using Proposition~\ref{aindep}, we are also able to give explicit asymptotics for the mean next nucleation time without assuming the forks being independent. 

Each nucleation of a fork can be seen as a successful escape from a metastable state, which is represented by the initial configuration where the nodes in $U_k$ in the fork are active and the node in $V_k$ in the fork is inactive. When considering multiple forks, we can view the network as an irreducible Markov process on a state space $\Omega$. The first nucleation can be described by a regenerative process where the Markov process leaves a metastable state $x_0$ (with all the nodes in $U_k$ active) and reaches a set $S$, which represents the set of states where at least one of the forks of minimum degree has all the nodes in $U$ simultaneously inactive. The set $S$ is rare for the Markov process, in the sense that the fraction of time spent in $S$ is small. Indeed, once the process reaches state $S$, with high probability the first nucleation happens in time $o(1)$ after one of the nodes of minimum degree in $V_k$ activates. Denote by $T^k_{x_0 \rightarrow S}$ the time it takes to go from $x_0$ to $S$, and note that 
\begin{equation}
\label{hittingtimenucleation}
\mathbb{E}_{1_U}[\bar{\tau}_k] = \mathbb{E}_{1_U}[T^k_{x_0 \rightarrow S}]\, [1+o(1)], \qquad r \to \infty.
\end{equation}

\begin{lemma}{\bf [Mean return time to metastable state]} \label{returntime}
For $k = 1, \dots, N$, suppose that $k-1$ nodes in $V$ have already been activated. Then, with high probability the time $R_{U_k}^x$ it takes for the network $G_k$ to reach the configuration with all the nodes in $U_k$ active (the metastable state $x_0$) starting from any other configuration $x$ is negligible, i.e., with high probability
\begin{equation}
\mathbb{E}_{1_U} [R_{U_k}^x] = o(1), \qquad r \to \infty.
\end{equation}
In particular, let $R_{U_k}^{k-1}$ be the time it takes for the network $G_k$ to reach the configuration with all the nodes in $U_k$ active starting from the moment the $(k-1)$-th node in $V$ activated. Then, with high probability
\begin{equation}
\mathbb{E}_{1_U} [R_{U_k}^{k-1}] = o(1), \qquad r \to \infty.
\end{equation}
\end{lemma}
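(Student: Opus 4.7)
The crucial input is that, during the $k$-th nucleation phase on $G_k$, each node in $U_k$ is active at any given instant with probability $1-O(r^{-\beta})$, while the rate at which any $v\in V_k$ activates is at most $O(r^{-\beta(\bar d_k-1)})$ by Theorem \ref{teoremapaper1}. A union bound over $U_k$ and $V_k$ shows that, at the instant just after the $(k-1)$-th activation, the configuration restricted to $G_k$ coincides with the metastable state $x_0$ (all $U_k$ active, all $V_k$ inactive) with high probability, whence $R_{U_k}^{k-1}=0$ with high probability.

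To handle the atypical starting configuration, I would argue that on any $x$ where a (necessarily small) subset $S\subseteq U_k$ is inactive and no $v\in V_k$ is active, each $u\in S$ has activation rate $g_U(Q_u)=\Theta(r^\beta)$, since with high probability the queue lengths stay of order $r$ throughout the phase. Hence the time until every $u\in S$ has activated is stochastically dominated by the maximum of $|S|$ i.i.d.\ exponentials of rate $\Theta(r^\beta)$, giving
\begin{equation}
\mathbb{E}^r\bigl[R_{U_k}^x\,\big|\,x\bigr]=O\bigl(r^{-\beta}\log|S|\bigr)=o(1).
\end{equation}
Within this short window, the expected number of $V_k$-activations is at most $|V_k|\,r^{-\beta}\cdot r^{-\beta(\bar d_k-1)}=|V_k|\,r^{-\beta\bar d_k}=o(1)$, so no $V_k$-interference occurs. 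If instead some $v\in V_k$ is already active in $x$, a deactivation time of $O(1)$ may be required before the previous argument applies, but this contributes to $\mathbb{E}^r[R_{U_k}^x]$ only on an event of probability $o(1)$ and is thus absorbed into the $o(1)$ bound.

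For the general statement, the configurations $x$ realized with non-negligible $\mathbb{P}^r$-probability during the $k$-th nucleation phase are precisely those to which the above bound applies, so the conclusion $\mathbb{E}^r[R_{U_k}^x]=o(1)$ holds with high probability over the realized $x$; the second statement is the special case $x=X\bigl(\sum_{l=1}^{k-1}\bar\tau_l\bigr)$, for which Lemma \ref{blockedU} further guarantees that the $U$-nodes removed when passing from $G_{k-1}$ to $G_k$ remain inactive. The main obstacle is justifying the occupancy statements for $U_k$- and $V_k$-nodes that underlie the argument; this requires Proposition \ref{aindep} to decouple forks sharing $U_k$-nodes, combined with the nucleation asymptotics in Theorem \ref{teoremapaper1} to estimate the per-node occupancy probabilities uniformly over time within the phase.
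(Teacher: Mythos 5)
Your core reasoning matches the paper's: in the subcritical and critical regime the queue lengths at nodes in $U$ remain of order $r$, so an inactive node in $U_k$ reactivates at rate $\Theta(r^\beta)$, and the return time to $x_0$ is of order $r^{-\beta}=o(1)$. The paper phrases this as a rate competition, namely that an inactive $u'$ activates before an active $u$ deactivates with probability tending to $1$, and then iterates over the finitely many nodes of $U_k$; your stochastic-domination bound by a maximum of $|S|$ i.i.d.\ $\mathrm{Exp}(\Theta(r^\beta))$ variables is the same estimate in slightly different form.

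Two small issues. First, your domination step controls the time until every $u\in S$ has activated \emph{at least once}, which is not the first hitting time of $x_0$: one must also rule out that some node of $U_k$ (active at the start or just reactivated) \emph{de}activates within the $O(r^{-\beta})$ window, otherwise the set of inactive nodes can grow. This is the same union bound you already carry out for $V_k$-interference ($|U_k|\cdot O(r^{-\beta})=o(1)$), but you omit it; the paper's formulation delivers it directly, since ``$u'$ activates before $u$ deactivates'' is exactly the simultaneity statement needed. Second, your opening claim that the configuration at the instant of the $(k-1)$-th nucleation coincides with $x_0$ whp, so that $R_{U_k}^{k-1}=0$ whp, is stronger than the lemma requires (which only asserts $\mathbb{E}^r[R_{U_k}^{k-1}]=o(1)$), and is trickier than it looks because it conditions the occupancy law at a random stopping time; the paper sidesteps this entirely by bounding $\mathbb{E}^r[R_{U_k}^x]$ uniformly in $x$, without invoking Proposition~\ref{aindep} at all, whereas you lean on it to justify the occupancy estimates. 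Neither issue changes the approach, but the second makes your proof do work the lemma does not need, while the first is a genuine (if easily filled) logical gap.
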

\begin{proof}
Recall that at any time $t$, the activation and deactivation of each node $u \in U$ are described by random variables with rates $g_U(Q_u(t))$ and $1$, respectively. Hence, each node $u \in U_k$ takes on average one unit of time to deactivate and $1/g_U(Q_u(t))$ to activate. Since in the subcritical and critical regime the queue lengths at any node at any moment are of order $r$ (see Section~\ref{subsec:uql} for more details), we can say that $1/g_U(Q_u(t)) = o(1)$. Suppose that, at some time $t$, node $u \in U_k$ is active and node $u' \in U_k$ is inactive, i.e., $X_u(t) = 1$ and $X_{u'}(t) = 0$. Since
\begin{equation}
\lim_{r \to \infty} \mathbb{P}_{1_U} (u' \text{ activates} < u \text{ deactivates}) = 1,
\end{equation}
and there is a finite number of nodes in $U_k$, with high probability, starting from any configuration $x$, all the nodes in $U_k$ will be active on average in $o(1)$. Hence, as $r \to \infty$, $\mathbb{E}_{1_U} [R_{U_k}^x] = o(1)$, and in particular $\mathbb{E}_{1_U} [R_{U_k}^{k-1}] = o(1)$.
\end{proof}

We are now ready to state a result for the mean next nucleation time in the subcritical and the critical regime.

\begin{proposizione}{\bf [Mean next nucleation time]} \label{meannextnucleationtime}
Consider the graph $G_k$. Recall that $\bar{d}_k$ is the minimum degree of a node in $V_k$ and $n_k$ is the number of forks of degree $\bar{d}_k$ in $G_k$.
\begin{itemize}
\item[{\rm (I)}] $\beta \in \big(0, \frac{1}{\bar{d}_k-1}\big)$: subcritical regime. The mean next nucleation time satisfies, for any $u \in U_k$,
\begin{equation} \label{meansubcriticaltau}
\mathbb{E}_{1_U} [\bar{\tau}_k] = f_k \, \mathbb{E}_{1_U}[\mathcal{T}_{v_k^*}^{Q^{k-1}}] = f_k\, F^{k}_{\mathrm{sub}}\, (Q_u^{k-1})^{\beta(\bar{d}_k -1)} \,[1+o(1)],\qquad r \to \infty,
\end{equation}
with 
\begin{equation}  \label{fksubcritical}
f_k =\frac{1}{n_k}.
\end{equation}
\item[{\rm (II)}] $\beta = \frac{1}{\bar{d}_k-1}$: critical regime. The mean next nucleation time satisfies, for any $u \in U_k$,
\begin{equation} \label{meancriticaltau}
\mathbb{E}_{1_U} [\bar{\tau}_k] = f_k \,\mathbb{E}_{1_U}[\mathcal{T}_{v_k^*}^{Q^{k-1}}]  = f_k\,  F^{k}_{\textrm{cr}} \, Q_u^{k-1} \,[1+o(1)],\qquad r \to \infty,
\end{equation}
with
\begin{equation} \label{fkcritical}
f_k = \frac{ \bar{d}_k B^{-(\bar{d}_k -1)} + c-\rho_U}{n_k \bar{d}_k B^{-(\bar{d}_k -1)} + c-\rho_U}.
\end{equation}
\end{itemize}
\end{proposizione}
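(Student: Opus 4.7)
The plan is to reduce $\bar{\tau}_k$ to the minimum of the nucleation times of the $n_k$ minimum-degree forks, treat those forks as asymptotically independent copies of a single nucleation with the law from Theorem~\ref{teoremapaper1}, and then compute the mean of the minimum explicitly in each regime. First, Lemma~\ref{lemmavw} ensures that any node in $V_k$ of degree strictly greater than $\bar{d}_k$ activates before a minimum-degree one with probability $o(1)$, so up to a $1+o(1)$ factor the minimum can be taken over the $n_k$ forks of degree exactly $\bar{d}_k$. Lemma~\ref{returntime} says that between failed attempts the system returns to $1_{U_k}$ in $o(1)$ time, and Proposition~\ref{aindep} says that partial inactivity of nodes within a fork does not shift its nucleation-time distribution asymptotically; combining the two, the $n_k$ competing nucleation times may be treated as i.i.d.\ copies of a single nucleation whose asymptotic law is given by Theorem~\ref{teoremapaper1}, up to multiplicative $1+o(1)$ corrections.

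In the subcritical regime each $\mathcal{T}_{v_j}^{Q^{k-1}}$ is asymptotically exponential with mean $\mu_{\mathrm{sub}}=F^k_{\mathrm{sub}}(Q_U^{k-1})^{\beta(\bar{d}_k-1)}$. The minimum of $n_k$ independent exponentials with rate $1/\mu_{\mathrm{sub}}$ is exponential with rate $n_k/\mu_{\mathrm{sub}}$, which gives
\begin{equation}
\mathbb{E}^r[\bar{\tau}_k] = \frac{1}{n_k}\,\mathbb{E}^r[\mathcal{T}_{v_k^*}^{Q^{k-1}}]\,[1+o(1)],
\end{equation}
i.e.\ \eqref{meansubcriticaltau} with $f_k=1/n_k$ as in \eqref{fksubcritical}.

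In the critical regime Theorem~\ref{teoremapaper1}(II) gives the limiting tail $\lim_{r\to\infty}\mathbb{P}^r(\mathcal{T}_{v_j}^{Q^{k-1}}/\mu_{\mathrm{cr}}>x) = (1-Cx)^{(1-C)/C}$ for $x\in[0,1/C)$, with $\mu_{\mathrm{cr}}=F^k_{\mathrm{cr}}\,Q_U^{k-1}$ and $C=F^k_{\mathrm{cr}}(c-\rho_U)$. By asymptotic independence the tail of $\bar{\tau}_k/\mu_{\mathrm{cr}}$ converges to $(1-Cx)^{n_k(1-C)/C}$, and the substitution $u=1-Cx$ gives
\begin{equation}
\int_0^{1/C}(1-Cx)^{n_k(1-C)/C}\,dx = \frac{1}{C}\int_0^1 u^{n_k(1-C)/C}\,du = \frac{1}{n_k-(n_k-1)C}.
\end{equation}
Plugging in $C=(c-\rho_U)/[\bar{d}_k B^{-(\bar{d}_k-1)}+c-\rho_U]$, the denominator simplifies to $[n_k\bar{d}_k B^{-(\bar{d}_k-1)}+c-\rho_U]/[\bar{d}_k B^{-(\bar{d}_k-1)}+c-\rho_U]$, so
\begin{equation}
\mathbb{E}^r[\bar{\tau}_k] = \frac{\mu_{\mathrm{cr}}}{n_k-(n_k-1)C}\,[1+o(1)] = f_k\,\mathbb{E}^r[\mathcal{T}_{v_k^*}^{Q^{k-1}}]\,[1+o(1)],
\end{equation}
with $f_k$ exactly as in \eqref{fkcritical}, which is \eqref{meancriticaltau}.

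The main obstacle will be justifying the asymptotic-independence step when the $n_k$ minimum-degree forks share $U$-nodes with each other, since Proposition~\ref{aindep} is stated for a single fork conditioned on partial inactivity rather than for the joint law of several competing forks. The strategy is to use Lemma~\ref{returntime} to argue that each unsuccessful attempt by any fork is followed by an $o(1)$-time regeneration at $1_{U_k}$, so that the competing nucleations can be coupled to truly independent copies with an error that is negligible relative to $\mathbb{E}^r[\mathcal{T}_{v_k^*}^{Q^{k-1}}]$. A secondary technical point, especially in the critical regime, is to upgrade the convergence in distribution given by Theorem~\ref{teoremapaper1} to convergence of means of the minimum; uniform integrability from the compact support of the limiting law should suffice, but the polynomial tail makes this slightly more delicate than in the subcritical case, where the exponential limit law gives uniform integrability essentially for free.
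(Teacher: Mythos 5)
Your proof follows the same route as the paper: discard non-minimum-degree forks via Lemma~\ref{lemmavw}, invoke Proposition~\ref{aindep} (together with the $o(1)$ regeneration times of Lemma~\ref{returntime}) to treat the $n_k$ competing nucleations as asymptotically i.i.d.\ with the law from Theorem~\ref{teoremapaper1}, and then compute the mean of the minimum of $n_k$ i.i.d.\ copies --- exponential in~(I), truncated-polynomial in~(II). Your explicit computation in the critical case agrees with the paper's Appendix~\ref{appA}: the paper differentiates the tail and integrates the density, while you integrate the tail directly, but both yield $f_k = 1/(n_k-(n_k-1)C)$, which simplifies to \eqref{fkcritical}. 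Two remarks. First, the paper also supplies a second, self-contained proof of~(I) that you do not mention: it views $\bar{\tau}_k$ as a mean hitting time of a rare set $S$ for a Markov process near its metastable state $x_0$, and computes $\mathbb{E}^r[\bar{\tau}_k]$ from the stationary measure $\pi(S)$; the paper points out that this regenerative argument breaks down in~(II) precisely because the queues move on scale $r$, which is why it is offered for the subcritical case only. Second, the gap you flag at the end --- that Proposition~\ref{aindep} addresses a single fork conditioned on partial inactivity rather than the joint law of several competing forks --- is a real subtlety, but the paper's own proof handles it with exactly the one-sentence appeal to Proposition~\ref{aindep} you describe and nothing more; your sketch of a regeneration-based coupling and the uniform-integrability point for upgrading the distributional limit to a limit of means are at least as detailed as what the paper provides on those steps.
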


\begin{proof}
By Proposition~\ref{aindep}, in the limit as $r \to \infty$ we may consider arbitrarily overlapping forks as if they were independent of each other. Therefore the computations for the mean next nucleation time carried out in Appendix~\ref{appA} for the case of independent forks can be used for the case of overlapping forks as well. For completeness, in the subcritical regime (I) we offer a proof that uses a different argument, which cannot be used in the critical regime (II) because the queues are changing on scale $r$ over time. 

Consider the stationary distribution $\pi$ of the Markov process mentioned above and recall that $x_0$ represents the metastable state with all the nodes in $U_k$ active. For any $u \in U_k$ the probability of the set $S$ is given by 
\begin{equation} \label{Sj}
\pi(S) =  \sum_{j=1}^{n_k} \pi(S_j) \,  [1+o(1)] 
= n_k \bigg(\frac{1}{B\, (Q_u^{k-1})^{\beta}}\bigg)^{\bar{d}_k} \,  [1+o(1)] , \qquad r \to \infty,
\end{equation}
where $S_j$ is the state in which the $j$-th fork has all its nodes simultaneously inactive. The terms representing multiple forks with all their nodes simultaneously inactive contribute in a negligible way to $\pi(S)$. Moreover, for $j = 1, \dots, n_k$, since the stationary distribution $\pi(S_j)$ can be interpreted as the long run proportion of time spent in state $S_j$, for any $u \in U_k$ we can write
\begin{equation}
\begin{split}
\pi(S_j) &=  \frac{\mathbb{E}_{1_U}[\text{time spent in } S_j ]}{\mathbb{E}_{1_U}[\text{time spent in } S_j ] 
+ \mathbb{E}_{1_U} [T^k_{x_0\rightarrow S_j}]}  \,  [1+o(1)] \\
& =  \frac{ \frac{1}{\bar{d}_k} \frac{1}{B\, (Q_u^{k-1})^{\beta}}}{  \frac{1}{\bar{d}_k} 
\frac{1}{B\, (Q_u^{k-1})^{\beta}} 
+  F_{\mathrm{sub}}^k \, (Q_u^{k-1})^{\beta (\bar{d}_k -1)}  } \,  [1+o(1)] \\
&=  \frac{ \frac{1}{\bar{d}_k} \frac{1}{B\, (Q_u^{k-1})^{\beta}} }
{ F_{\mathrm{sub}}^k \, (Q_u^{k-1})^{\beta (\bar{d}_k -1)}  } \, [1+o(1)] 
=  \bigg(\frac{1}{B \, (Q_u^{k-1})^{\beta}}\bigg)^{\bar{d}_k} \, [1+o(1)], \qquad r \to \infty.
\end{split}
\end{equation}
This proves \eqref{Sj}. 

Using the same type of argument, we can compute $\mathbb{E}_{1_U} [T_{x_0 \rightarrow S}]$. Indeed, for any $u \in U_k$,
\begin{equation}
\begin{aligned}
\pi(S) &=  \frac{\mathbb{E}_{1_U}[\text{time spent in } S ]}{\mathbb{E}_{1_U}[\text{time spent in } S ] 
+ \mathbb{E}_{1_U} [T^k_{x_0 \rightarrow S}]} \\ 
&= 
\frac{ \frac{1}{\bar{d}_k} \frac{1}{B \, (Q_u^{k-1})^{\beta}} }
{  \frac{1}{\bar{d}_k} \frac{1}{B \, (Q_u^{k-1})^{\beta}} 
+  \mathbb{E}_{1_U} [T^k_{x_0\rightarrow S}]  } \,  [1+o(1)] 
= \frac{ \frac{1}{\bar{d}_k} \frac{1}{B \, (Q_u^{k-1})^{\beta}} }
{  \mathbb{E}_{1_U} [T^k_{x_0\rightarrow S}] } \,  [1+o(1)] ,
\qquad r \to \infty,
\end{aligned}
\end{equation}
hence, by inverting it, we get
\begin{equation} \label{termsx}
\begin{split}
\mathbb{E}_{1_U} [T^k_{x_0\rightarrow S}] & = \frac{\frac{1}{\bar{d}_k} \frac{1}{B\, (Q_u^{k-1})^{\beta}} }{\pi(S)} \, [1+o(1)]
= \frac{\frac{1}{\bar{d}_k} \frac{1}{B\, (Q_u^{k-1})^{\beta}} }
{n_k \left(\frac{1}{B \, (Q_u^{k-1})^{\beta}}\right)^{\bar{d}_k}}\, 
[1+o(1)] \\
&= f_k\, F^{k}_{\mathrm{sub}}\, (Q_u^{k-1})^{\beta(\bar{d}_k -1)} \,[1+o(1)],\qquad r \to \infty,
\end{split}
\end{equation}
with
\begin{equation}
f_k = \frac{1}{n_k}.
\end{equation}
The proof is completed by using \eqref{hittingtimenucleation}.
\end{proof}

\begin{corollario}{\bf [Pre-factor adjustment]} \label{pk}
Given the graph $G_k$, conditional on the next activating node of degree $\bar{d}_k$,
\begin{equation}
\mathbb{E}_{1_U}[\bar{\tau}_k \,|\,  Y_k = i_k] = \mathbb{E}_{1_U}\Big[\mathrm{min}_{v \in V_k} \mathcal{T}_{v}^{Q^{k-1}} 
\,\big|\, Y_k = i_k\Big] = f_k \,\mathbb{E}_{1_U}[\mathcal{T}_{v_{i_k}}^{Q^{k-1}}], \qquad r \to \infty,
\end{equation}
where $f_k$ is as in \eqref{fksubcritical} or \eqref{fkcritical} when a subcritical node or a critical node activates, respectively.
\end{corollario}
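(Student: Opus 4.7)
The plan is to deduce the corollary directly from Proposition~\ref{meannextnucleationtime}, using the asymptotic independence result of Proposition~\ref{aindep} and the exchangeability among the $n_k$ minimum-degree forks of $G_k$. First, I would observe that all $n_k$ forks of degree $\bar{d}_k$ in $G_k$ share the same degree and feel the same queue-length environment $Q_U^{k-1}$. Applying Theorem~\ref{teoremapaper1} to each such fork in isolation shows that the nucleation times
\begin{equation}
\{\mathcal{T}_v^{Q^{k-1}} : v \in V_k,\ d_k(v) = \bar{d}_k\}
\end{equation}
have, to leading order, a common law depending only on $\bar{d}_k$ and $Q_U^{k-1}$. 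In particular, $\mathbb{E}^r[\mathcal{T}_{v_{i_k}}^{Q^{k-1}}] = \mathbb{E}^r[\mathcal{T}_{v_k^*}^{Q^{k-1}}]\,[1+o(1)]$ for every index $i_k$ with $d_k(v_{i_k}) = \bar{d}_k$.

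Next I would invoke Proposition~\ref{aindep} to treat these $n_k$ nucleation times as asymptotically independent, even when the underlying forks overlap in $U_k$. The collection is therefore an i.i.d. family to leading order, and by the exchangeability of an i.i.d. family the joint law of $(\min_v \mathcal{T}_v^{Q^{k-1}}, Y_k)$ factorises: conditioning on which minimum-degree index $i_k$ wins the race does not alter the distribution of the winning time. Hence
\begin{equation}
\mathbb{E}^r\Big[\min_{v \in V_k} \mathcal{T}_v^{Q^{k-1}} \,\Big|\, Y_k = i_k\Big] = \mathbb{E}^r\Big[\min_{v \in V_k} \mathcal{T}_v^{Q^{k-1}}\Big]\,[1+o(1)], \qquad r \to \infty.
\end{equation}
Nodes of degree strictly greater than $\bar{d}_k$ contribute only negligibly to the minimum by Lemma~\ref{lemmavw}, so restricting to minimum-degree competitors in the argument above is costless.

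Finally, the right-hand side is exactly $\mathbb{E}^r[\bar{\tau}_k]$, and Proposition~\ref{meannextnucleationtime} evaluates it as $f_k\,\mathbb{E}^r[\mathcal{T}_{v_k^*}^{Q^{k-1}}]\,[1+o(1)]$, with $f_k$ given by \eqref{fksubcritical} in the subcritical regime and by \eqref{fkcritical} in the critical regime. Combining with the identification $\mathbb{E}^r[\mathcal{T}_{v_k^*}^{Q^{k-1}}] = \mathbb{E}^r[\mathcal{T}_{v_{i_k}}^{Q^{k-1}}]\,[1+o(1)]$ from the first step yields the claim. The only real subtlety is the factorisation of the joint law of $(\min, \arg\min)$ in the presence of overlapping forks, but this is precisely what Proposition~\ref{aindep} delivers, so no further technical obstacle arises.
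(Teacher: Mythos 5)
Your proposal is correct and fills in, in the natural way, what the paper leaves implicit: the paper's proof is the single sentence ``The claim follows from Proposition~\ref{meannextnucleationtime},'' and your argument supplies exactly the missing glue --- Proposition~\ref{aindep} to reduce to an asymptotically i.i.d.\ family of minimum-degree nucleation times, exchangeability to show that $\min$ and $\arg\min$ decouple so the conditional expectation matches the unconditional one, Lemma~\ref{lemmavw} to discard higher-degree competitors, and then Proposition~\ref{meannextnucleationtime} for the value. This is the same route the authors intend; you have simply written out the steps.
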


\begin{proof}
The claim follows from Proposition~\ref{meannextnucleationtime}.
\end{proof}

In the subcritical regime (I), the queue lengths do not change on scale $r$ and therefore the renewal theory developed in \cite{FMNS15} applies, which is tailored to exponential behavior in metastable regimes. In the critical regime (II), however, the queue lengths do change on scale $r$ and \cite{FMNS15} does not apply. For details, see Section~\ref{subsec:uql}.  

Recall that $\Omega$ is the state space of the Markov process and that, in our notation, $\bar{\tau}_k = T_{x_0\rightarrow S}$.

\begin{definizione}{\bf [Recurrence property]} 
Let $H > 0$ and $h \in (0,1)$. We say that the pair $(x_0, S)$ satisfies $\mathrm{Rec}(H,h)$ if
\begin{equation} \label{rec}
\sup_{x \in \Omega} \mathbb{P} \big( T_{x \rightarrow \{x_0, S\}} > H \big) \leq h.
\end{equation}
\end{definizione} 

\begin{proposizione}{\bf [Law of the next nucleation time in the subcritical regime \cite[Theorem 2.3]{FMNS15}]} \label{lawnextnucleationtime}
Consider the pair $(x_0,S)$ such that $\mathrm{Rec}(H,h)$ holds for $0<H < \mathbb{E}_{1_U} [\bar{\tau}_k]$, with $\epsilon = H/\mathbb{E}_{1_U} [\bar{\tau}_k]$ and $h$ sufficiently small. Then there exist functions $C(\epsilon, h)$ and $\lambda(\epsilon, h)$, satisfying $C(\epsilon, h), \lambda(\epsilon, h) \to 0$ as $\epsilon, h \downarrow 0$, such that, for any $t > 0$, 
\begin{equation}
\bigg| \, \mathbb{P} \bigg( \frac{\bar{\tau}_k}{\mathbb{E}_u [\bar{\tau}_k]} > t \bigg) - e^{-t} \, \bigg| \leq C e^{-(1-\lambda)t}.
\end{equation}
\end{proposizione}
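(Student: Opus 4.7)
The plan is to apply Theorem 2.3 of \cite{FMNS15} as a black box to the appropriate reduced Markov chain, after checking that its hypotheses are satisfied in our setting. The result in \cite{FMNS15} is a general statement about hitting-time distributions of rare sets for irreducible Markov chains satisfying a uniform recurrence condition toward the metastable state, and it asserts an exponential law up to explicit error terms controlled by the two small parameters $\epsilon$ and $h$ appearing in the recurrence hypothesis. Since the statement of the proposition is literally the conclusion of that theorem, the only genuine work is to cast our setting in the framework of \cite{FMNS15}.

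First I would make precise the underlying Markov process. In the subcritical regime $\beta(\bar{d}_k-1)<1$, the nucleation timescale $\bar{\tau}_k$ is of order $r^{\beta(\bar{d}_k-1)}=o(r)$, which is much shorter than the timescale on which the queue lengths $Q_U^{k-1}$ change (see Section~\ref{subsec:uql}). Therefore, on the relevant time window, the queue lengths may be treated as frozen at their values at the beginning of step $k$, and the activity process $X(t)$ reduces to an irreducible continuous-time Markov chain on the finite state space $\mathcal{X}$ restricted to $G_k$, with constant activation rates $g_U(Q_U^{k-1})$ and unit deactivation rates. Within this chain, the metastable state is $x_0 = 1_{U_k}$ (all nodes in $U_k$ active, all in $V_k$ inactive), and the rare set $S$ consists of all configurations where some fork of minimum degree has its $U_k$-nodes simultaneously inactive, as used in the proof of Proposition~\ref{meannextnucleationtime}.

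Next I would verify the recurrence condition $\mathrm{Rec}(H,h)$ required by \cite{FMNS15}. By Lemma~\ref{returntime}, starting from any configuration $x$, the mean time $R^x_{U_k}$ to reach $x_0$ is $o(1)$ with high probability as $r\to\infty$, and the probability of hitting $S$ from a non-metastable state before returning to $x_0$ is bounded by a power of $1/r$. Hence, for a judicious choice $H=r^{\delta}$ with $0<\delta<\beta(\bar{d}_k-1)$, one has $\sup_x \mathbb{P}(T_{x\to\{x_0,S\}}>H)\le h$ with $h=h(r)\downarrow 0$, while $\epsilon=H/\mathbb{E}^r[\bar{\tau}_k]\downarrow 0$ as $r\to\infty$. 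Combined with the estimate $\mathbb{E}^r[\bar{\tau}_k] = f_k F^k_{\mathrm{sub}}(Q_U^{k-1})^{\beta(\bar{d}_k-1)}[1+o(1)]$ from Proposition~\ref{meannextnucleationtime}, which ensures that the set $S$ is indeed rare for the chain, the hypotheses of \cite[Theorem 2.3]{FMNS15} are met.

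Finally, invoking that theorem yields the exponential approximation with the stated constants $C(\epsilon,h)$ and $\lambda(\epsilon,h)$ both tending to zero. The main obstacle in this program is the verification of $\mathrm{Rec}(H,h)$ uniformly over all initial configurations: one has to rule out degenerate trajectories along which the system might loiter far from $x_0$ for macroscopic time without hitting $S$. This is handled by combining the $o(1)$ return-time estimate of Lemma~\ref{returntime} with the asymptotic-independence argument of Proposition~\ref{aindep}, which shows that partial simultaneous inactivities of subsets of $U_k$ do not alter the nucleation statistics in the limit. Once this uniform recurrence is established, the remainder is a mechanical application of the general theorem.
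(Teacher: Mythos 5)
Your proposal is correct and follows essentially the same route as the paper: verify $\mathrm{Rec}(H,h)$ via Lemma~\ref{returntime}, note that $\epsilon = H/\mathbb{E}^r[\bar{\tau}_k]\downarrow 0$ thanks to Proposition~\ref{meannextnucleationtime}, and then invoke \cite[Theorem 2.3]{FMNS15} as a black box. The only cosmetic difference is your choice $H=r^\delta$ where the paper simply takes $H=1$ (both give $\epsilon,h\downarrow 0$), and your explicit framing of the frozen-queue reduced chain, which the paper leaves implicit.
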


\begin{proof}
We choose $H$ to be a constant, and without loss of generality set $H=1$. We claim that the pair $(x_0, S)$ satisfies the property $\mathrm{Rec}(H,h)$ with $h$ sufficiently small. Indeed, starting from any configuration $x \in \Omega$, the network reaches the set $\{x_0, S\}$ in a small time which is $o(1)$.

If the starting configuration $x$ is one of the configurations $S_j$, $j=1, \dots, n_k$, corresponding to the set $S$, then we are done. Otherwise, by Lemma~\ref{returntime}, the metastable state $x_0$ attracts in time $o(1)$ every configuration $x$ for which some forks have some nodes in $U$ inactive. It is therefore immediate that $T_{x \rightarrow \{x_0, S\}}$ is smaller than $H$ with high probability, which is what we need in order to claim that \eqref{rec} holds when $h$ is sufficiently small. Note that we can let $h \downarrow 0$ as  $r \to \infty$.

We recover from Proposition~\ref{meannextnucleationtime} that the ratio between $H$ and the mean next nucleation time is sufficiently small. Indeed, $\epsilon = H/\mathbb{E}_{1_U} [\bar{\tau}_k] \downarrow 0$ as $r \to \infty$. Hence a straightforward application of \cite[Theorem 2.3]{FMNS15} allows us to conclude that the law of the next nucleation time divided by its mean is exponential with unit rate as $r \to \infty$. 
\end{proof}

\subsection{Updated queue lengths}
\label{subsec:uql}

In this section we analyze in more detail how the mean queue lengths change over time and how they affect the mean nucleation times associated with each step of the algorithm. Since the queue lengths have a \textit{good behavior} (see Remark~\ref{gbremark}), we will often approximate them by their mean, or vice versa, at the cost of an error term that is negligible as $r \to \infty$. Note that in case of activation of a supercritical node, the queue lengths become of order less than $r$, but at that point we do not need any more control on their behavior since we know how the transition occurs.

We start with initial queue lengths $Q^0 = (Q_U^0, Q_V^0)$ where $Q_U(0)=\gamma_U r$ and $Q_V(0) = \gamma_V r$, with $\gamma_U > \gamma_V \geq 0$. We are interested in studying how the queue lengths change along a fixed path, depending on which types of forks we encounter at each activation. Fix an admissible path and consider the sequence of nodes activating in $V$. 

Similarly to \eqref{nt&q}, the next nucleation time $\bar{\tau}_k = \min_{v \in V_k} \mathcal{T}_v^{Q^{k-1}}$ (recall Definition~\ref{mintime}) satisfies
\begin{equation} \label{mnnt&ql}
\mathbb{E}_{1_U} [\bar{\tau}_k] = f'_k \, r^{1 \wedge \beta(\bar{d}_k-1)}\,[1+o(1)], \qquad r \to \infty,
\end{equation}
where $f'_k$ depends on $f_k$, on the constants $F^{k}_{\textrm{sub}}, F^{k}_{\textrm{cr}}, F^{k}_{\textrm{sup}}$ (for the three regimes, respectively), and on the updated queue lengths. The following theorem shows how the queue lengths change according to which type of node activates in $V$. 
  
Recall the notation $o(r^{\alpha}) = o_{\mathbb{P}_{1_U}}(r^{\alpha})$ for $\alpha \geq 0$, which refers to a random variable determined by the law $\mathbb{P}_{1_U}$ that goes to 0 in distribution when divided by $r^{\alpha}$ as $r \to \infty$.

\begin{teorema}{\bf [Mean updated queue length]} \label{qlthm}
Let $(\bar{d}_k)_{k=1}^N$ be the sequence of degrees in a fixed admissible path and $d^* = \max_{1 \leq k \leq N} \bar{d}_k$. \begin{itemize} 
\item[{\rm (I)}] 
$\beta \in (0, \frac{1}{d^*-1})$: subcritical regime. After step $k$, the mean queue length at any node $u \in U$ is
\begin{equation} \label{qlsubcritical}
\mathbb{E}_{1_U} [Q_u^{k}] = \gamma_U r \,[1 + o(1)], \qquad r \to \infty.
\end{equation}
\item[{\rm (II)}] 
$\beta = \frac{1}{d^*-1}$: critical regime. After step $k$, the mean queue length at any node $u \in U$ is
\begin{equation} \label{qlcritical}
\mathbb{E}_{1_U} [Q_u^{k}] =\gamma_U^{(k)} r \,[1 + o(1)], \qquad r \to \infty,
\end{equation}
with 
\begin{equation} \label{qlcriticalrv}
\gamma_U^{(k)}= \gamma_U - (c-\rho_U) \sum_{\substack{1 \leq i \leq k \\ i:\, \bar{d}_i = d^*}} f'_i > 0,
\end{equation}
where for a critical node $v_i$ the coefficient $f'_i$ is defined in a recursive way as 
\begin{equation} \label{recursivef'}
f_i' = \frac{1}{n_{i} \bar{d}_{i} B^{-(\bar{d}_{i}-1)} + c-\rho_U} \, \bigg(\gamma_U - (c-\rho_U) \sum_{\substack{1 \leq j \leq i-1 \\ j:\, \bar{d}_j = d^*}} f_j'\bigg) > 0,
\end{equation}
\begin{equation} 
f'_1 = 
\left\{\begin{array}{ll} 
\frac{1}{n_1} \,\frac{1}{\bar{d}_1 B^{-(\bar{d}_1 -1)}} \,\gamma_U, & \text{ if } \bar{d}_1 < d^*, \\[0.2cm]
\frac{1}{n_1\bar{d}_1 B^{-(\bar{d}_1 -1)} + c-\rho_U} \, \gamma_U, & \text{ if } \bar{d_1} = d^*.
\end{array} \right.
\end{equation}
\item[{\rm (III)}] 
$\beta \in (\frac{1}{d^*-1}, \infty)$: supercritical regime. After step $k$, the mean queue length at any node $u \in U$, if any supercritical node in $V$ has activated, is
\begin{equation} \label{qlsupercritical}
\mathbb{E}_{1_U} [Q_u^{k}] = o(r), \qquad r \to \infty.
\end{equation}
\end{itemize}
\end{teorema}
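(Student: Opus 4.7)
The plan is to derive the queue-length dynamics explicitly in terms of the elapsed activation times, and then plug in the asymptotics of $\mathbb{E}^r[\bar{\tau}_l]$ from Proposition~\ref{meannextnucleationtime} separately in each of the three regimes. The first step is to show that, for any node $u \in U$ still present in the subgraph $G_{k+1}$, one has
\begin{equation*}
Q_u(t) = Q_u(0) + \sum_{n=0}^{N_u(t)} Y_{un} - c\int_0^t X_u(s)\,ds,
\end{equation*}
so that $\mathbb{E}^r[Q_u(t)] = Q_u(0) + \rho_U t - c\,\mathbb{E}^r[T_u(t)]$. Since $u$ has no active neighbor in $V$ as long as it is still present in the subgraph, its activation rate $g_U(Q_u(t))$ is of order $r^{\beta}$ (by the good-behavior statement of Remark~\ref{gbremark}), while its deactivation rate is $1$. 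Hence the expected fraction of time $u$ spends inactive is $O(r^{-\beta}) = o(1)$, giving $\mathbb{E}^r[T_u(t)] = t\,[1+o(1)]$ and therefore the basic drift identity $\mathbb{E}^r[Q_u(t)] = Q_u(0) - (c-\rho_U)\,t\,[1+o(1)]$, valid as long as $Q_u(t)$ stays strictly positive.

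The second step is to sum over activations. By Definition~\ref{updatedql}, $Q_U^{k} = Q_U(\sum_{l=1}^{k}\bar{\tau}_l)$, so
\begin{equation*}
\mathbb{E}^r[Q_U^{k}] = \gamma_U r - (c-\rho_U)\sum_{l=1}^{k}\mathbb{E}^r[\bar{\tau}_l]\,[1+o(1)].
\end{equation*}
From \eqref{mnnt&ql}, $\mathbb{E}^r[\bar{\tau}_l]$ is of order $r^{1\wedge \beta(\bar{d}_l-1)}$. In the subcritical regime every exponent satisfies $\beta(\bar{d}_l-1)<1$, so each summand is $o(r)$ and \eqref{qlsubcritical} follows immediately. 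In the supercritical regime, as soon as the first supercritical fork appears, its mean nucleation time is $F^{k}_{\mathrm{sup}}\,Q_U^{k-1} = \frac{Q_U^{k-1}}{c-\rho_U}$, which by the drift identity above is exactly the time needed to drive the remaining queues to $0$; this yields \eqref{qlsupercritical}.

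The genuinely delicate case is the critical regime, which I would handle by induction on $k$. Assuming $\mathbb{E}^r[Q_U^{i-1}] = \gamma_U^{(i-1)}\,r\,[1+o(1)]$, I combine Proposition~\ref{meannextnucleationtime}(II) with the critical line of \eqref{nt&q} to write
\begin{equation*}
\mathbb{E}^r[\bar{\tau}_i] = f_i F^{i}_{\mathrm{cr}}\,\mathbb{E}^r[Q_U^{i-1}]\,[1+o(1)] = \frac{\gamma_U^{(i-1)}}{n_i\bar{d}_iB^{-(\bar{d}_i-1)}+c-\rho_U}\,r\,[1+o(1)],
\end{equation*}
using the algebraic identity $f_i F^{i}_{\mathrm{cr}} = (n_i\bar{d}_iB^{-(\bar{d}_i-1)}+c-\rho_U)^{-1}$. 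For a subcritical step ($\bar{d}_i<d^*$) the analogous formula uses Proposition~\ref{meannextnucleationtime}(I) and yields $\mathbb{E}^r[\bar{\tau}_i] = o(r)$, so such steps do not affect $\gamma_U^{(\cdot)}$. Dividing by $r$ and reinserting into the sum formula above produces precisely the recursion \eqref{recursivef'} for $f'_i$ and the update rule \eqref{qlcriticalrv}. The base case $f'_1$ splits into the two displayed formulas according to whether $\bar{d}_1=d^*$ or $\bar{d}_1<d^*$.

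The main obstacle will be upgrading the expectation-level argument in the first step to a pathwise statement, since the critical pre-factor $F^{i}_{\mathrm{cr}}$ applied at step $i$ requires the \emph{actual} queue $Q_U^{i-1}$ (not merely its mean) to be close to $\gamma_U^{(i-1)} r$. This is precisely what the good-behavior estimate of Remark~\ref{gbremark} delivers, but it has to be propagated through the induction together with Lemma~\ref{blockedU}, which guarantees that nodes in $U\setminus U_{k+1}$ stay blocked and therefore do not re-enter the dynamics that drive the remaining queues. Finally, one must check the positivity $\gamma_U^{(k)}>0$ asserted in \eqref{qlcriticalrv}: in the critical regime the queues at the remaining nodes in $U$ cannot hit $0$ before step $N$, because if they did then the nucleation rate would effectively become supercritical, contradicting the critical time order $\Theta(r)$ derived in Theorem~\ref{meantransitiontime}(II).
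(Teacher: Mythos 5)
Your proof takes essentially the same approach as the paper: express $\mathbb{E}^r[Q_U^k]$ via the linear drift $-(c-\rho_U)$ over the elapsed nucleation times, substitute the asymptotics of $\mathbb{E}^r[\bar{\tau}_l]$ from Proposition~\ref{meannextnucleationtime}, and iterate (the paper does this step by step for $k=1,2,\dots$; you package it as an explicit sum plus induction, which is equivalent). Your explicit derivation of the drift identity from $\mathbb{E}^r[T_u(t)]=t[1+o(1)]$ and your remark about propagating the good-behavior estimate are both implicit in the paper via the reference to \cite{BdHNS18} and Remark~\ref{gbremark}.

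One small flaw: your justification of the positivity $\gamma_U^{(k)}>0$ appeals to the time order $\Theta(r)$ from Theorem~\ref{meantransitiontime}(II), but that theorem is proved \emph{using} the present result, so the argument is circular. Fortunately, positivity is immediate from the recursion itself: whenever $\bar{d}_k=d^*$ one has
\begin{equation*}
\gamma_U^{(k)} \;=\; \gamma_U^{(k-1)} - (c-\rho_U) f'_k
\;=\; \gamma_U^{(k-1)}\cdot \frac{n_k \bar{d}_k B^{-(\bar{d}_k-1)}}{n_k \bar{d}_k B^{-(\bar{d}_k-1)} + c-\rho_U},
\end{equation*}
which is a positive fraction times $\gamma_U^{(k-1)}$, so positivity propagates by induction from $\gamma_U^{(0)}=\gamma_U>0$. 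Replace your dynamical argument with this algebraic one and the proof is clean.
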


\begin{proof}
We treat the three regimes separately.

\medskip\noindent
(I) $\beta \in (0, \frac{1}{d^*-1})$. All the nodes in $V$ are subcritical, in particular the first node $v_1 \in V$. Then $\mathbb{E}_{1_U}[{\bar{\tau}_1}] = o(r)$ as $r \to \infty$. The mean queue length at any node $u \in U$ after node $v_1$ activates is (recall Section~\ref{ss:model})
\begin{equation}
\begin{split}
\mathbb{E}_{1_U}[Q_u(\bar{\tau}_1)] 
&= \mathbb{E}_{1_U}[\gamma_U r - (c-\rho_U) {\bar{\tau}_1}] =  \gamma_U r - (c-\rho_U) \mathbb{E}_{1_U}[{\bar{\tau}_1}] \\
& = \gamma_U r \, [1+o(1)], \qquad r \to \infty,
\end{split}
\end{equation}
which means that after the first activation the mean queue lengths are the same as before, up to an error term $o(1)$. Iterating this reasoning, we conclude that the mean queue lengths remain approximately the same as long as we activate subcritical nodes in $V$.

\medskip\noindent
(II) $\beta = \frac{1}{d^*-1}$. If the first node $v_1 \in V$ is subcritical, then the time it takes to nucleate its fork does not influence the mean queue lengths by much, as seen in (I). Without loss of generality, we may therefore assume that $v_1$ is critical. Then $\mathbb{E}_{1_U}[{\bar{\tau}_1}] = f'_1 r $ is of order $r$. The mean queue length at any node $u \in U$ after node $v_1$ activates is
\begin{equation}
\begin{split}
\mathbb{E}_{1_U}[Q_u(\bar{\tau}_1)] 
&= \mathbb{E}_{1_U}[\gamma_U r - (c-\rho_U)\bar{\tau}_1]= \gamma_U r - (c-\rho_U) \mathbb{E}_{1_U}[{\bar{\tau}_1}] \\
&= (\gamma_U - (c-\rho_U)f'_1)r\,[1+o(1)] = \gamma_U^{(1)}r\,[1+o(1)], \qquad r \to \infty,
\end{split}
\end{equation}
where $\gamma_U^{(1)} = \gamma_U - (c-\rho_U)f'_1 > 0$. 

If the second node $v_2 \in V$ is subcritical, then again the time it takes to nucleate its fork does not influence the mean queue lengths by much. Assume therefore that $v_2$ is critical. Then the fork requires a nucleation time of order $r$, namely, $\mathbb{E}_{1_U}[{\bar{\tau}_2}] = f'_2 r$. The mean queue length at any node $u \in U$ after node $v_2 \in V$ activates is
\begin{equation}
\begin{split}
\mathbb{E}_{1_U}[Q_u(\bar{\tau}_1 + \bar{\tau}_2)] 
&= \mathbb{E}_{1_U}[\gamma_U r - (c-\rho_U) (\bar{\tau}_1+\bar{\tau}_2)] =  \gamma_U r - (c-\rho_U)(\mathbb{E}_{1_U}[{\bar{\tau}_1}] +\mathbb{E}_{1_U}[{\bar{\tau}_2}] ) \\
& = (\gamma_U - (c-\rho_U)(f'_1 + f'_2))r \, [1+o(1)] = \gamma_U^{(2)}r \, [1+o(1)], \qquad r \to \infty,
\end{split}
\end{equation}
where $\gamma_U^{(2)}= \gamma_U - (c-\rho_U)(f'_1 + f'_2) > 0$. 

More generally, we have that, for any node $u \in U$,
\begin{equation}
\mathbb{E}_{1_U}[Q_u^{k}] = \gamma_U^{(k)} r \,[1 + o(1)], \qquad r \to \infty,
\end{equation}
with 
\begin{equation}
\gamma_U^{(k)}= \gamma_U - (c-\rho_U) \sum_{\substack{1 \leq i \leq k \\ i:\, \bar{d}_i = d^*}} f'_i > 0,
\end{equation}
where the last sum is over all the critical nodes activated up to step $k$. Each of them contributes with a positive coefficient $f_i'$ which is given by the recursive relation
\begin{equation} \label{f'def}
\begin{split}
f_i' & = f_{i} \, F_{\mathrm{cr}}^{i} \, \gamma_U^{(i-1)} \\
& = \frac{\bar{d}_{i} B^{-(\bar{d}_{i}-1)}+ c-\rho_U}{n_{i} \bar{d}_{i} B^{-(\bar{d}_{i}-1)} + c-\rho_U}\, \frac{1}{\bar{d}_{i} B^{-(\bar{d}_{i}-1)}+ c-\rho_U} \, \gamma_U^{(i-1)} \\
& = \frac{1}{n_{i} \bar{d}_{i} B^{-(\bar{d}_{i}-1)} + c-\rho_U} \, \Bigg(\gamma_U - (c-\rho_U) \sum_{\substack{1 \leq j \leq i-1 \\ j:\, \bar{d}_j = d^*}} f_j'\Bigg).
\end{split}
\end{equation}
Note that the coefficients $f_k'$ introduced in \eqref{mnnt&ql} are defined for every $k = 1, \dots, N$, but in the above computations we are only interested in the ones associated with the critical nodes. Note that
\begin{equation} 
f'_1 = 
\left\{\begin{array}{ll} 
\frac{1}{n_1} \,\frac{1}{\bar{d}_1 B^{-(\bar{d}_1 -1)}} \,\gamma_U, & \text{ if } \bar{d}_1 < d^*, \\[0.2cm]
\frac{1}{n_1\bar{d}_1 B^{-(\bar{d}_1 -1)} + c-\rho_U} \, \gamma_U, & \text{ if } \bar{d_1} = d^*.
\end{array} \right.
\end{equation}

\medskip\noindent
(III) $\beta \in (\frac{1}{d^*-1}, \infty)$. If the first node $v_1 \in V$ is subcritical, then its nucleation time does not influence the mean queue lengths by much, as seen in (I). If $v_1$ is critical, then the mean queue lengths decrease but remain of order $r$, as seen in (II). We therefore assume that $v_1$ is supercritical. Then $\mathbb{E}_{1_U}[{\bar{\tau}_1}] = T_U(r) = \frac{\gamma_U}{c-\rho_U} r\,[1+o(1)]$, as $r \to \infty.$ Indeed, from Theorem~\ref{teoremapaper1} we know that the mean nucleation time of a supercritical fork is given by the expected time it takes for the queue length to hit zero. This holds for every supercritical node in $V$ and therefore it is true also for $\mathbb{E}_{1_U}[{\bar{\tau}_1}]$. Hence, the mean queue length at any node $u \in U$ after node $v_1 \in V$ activates is
\begin{equation}
\mathbb{E}_{1_U}[Q_u(\bar{\tau}_1)] = \mathbb{E}_{1_U}[\gamma_U r - (c-\rho_U) \bar{\tau}_1] 
= \gamma_U r - (c-\rho_U) \mathbb{E}_{1_U}[\bar{\tau}_1]= o(r), \qquad r \to \infty.
\end{equation}
More generally, the mean queue lengths become $o(r)$ as soon as the first supercritical node is activated, independently of what was activated before. Thus, after any step $k$ the mean queue length at any node $u \in U$, if any supercritical node has activated, is
\begin{equation}
\mathbb{E}_{1_U}[Q_u^{k}] = o(r), \qquad r \to \infty.
\end{equation}
\end{proof}

In summary, we have shown that if we activate a subcritical node, then we do not change the mean queue lengths at nodes in $U$ by much: they only decrease by a factor $o(1)$. On the other hand, if we activate a critical node, then the mean queue lengths drop significantly, but still remain of order $r$. Finally, if we activate a supercritical node, then the mean queue lengths become $o(r)$, and remain so during all the successive nucleations. Recall that by Remark~\ref{gbremark} we can approximate the queue lengths with their mean, hence with the help of \eqref{mnnt&ql} we know how to relate the mean next nucleation times of the forks to the updated queue lengths after each activation. Hence we know that, once we activate a node that contributes order $r$ to the total mean transition time, we can ignore the contribution of all the previous and all the subsequent subcritical nodes. Once we activate a supercritical node, we can ignore the contribution of all the subsequent nodes, since their queue lengths are $o(r)$.


\section{Analysis of the algorithm}
\label{sec:algprop}

In Section~\ref{subsec:rec} we describe how the algorithm acts on an arbitrary bipartite graph. (In Section~\ref{subsec:example} we already illustrated this via an example.) In Section~\ref{subsec:grconst} we prove the greediness and the consistency of the algorithm. In Section~\ref{subsec:complexity} we discuss the algorithm complexity.


\subsection{Recursion}
\label{subsec:rec}

Consider the graph $G= G_1= ((U_1,V_1),E_1)$. The first node activating in $V_1$ is the one with the least degree, since this requires the least number of nodes in $U_1$ to become simultaneously inactive. Since the expected time until $m$ nodes in $U_1$ are simultaneously inactive is of order $r^{1 \wedge \beta(m-1)}$, the first node to activate in $V_1$ is with high probability $v_{Y_1}$ such that $d(v_{Y_1}) = \bar{d}_1 = \min_{v \in V_1} d(v)$, where $d(v)$ denotes the degree of node $v$ in the graph $G_1$. We make the algorithm pick as first node a node $v_{Y_1}$ with least degree in $V_1$.  If there are multiple nodes with the same least degree, then the algorithm chooses one of them uniformly at random. If the least degree $\bar{d}_1$ is such that $\beta(\bar{d}_1 -1) > 1$, then the algorithm chooses a node uniformly at random among all nodes in $V_1$. Let $G_1'(U_1',V_1')$ be  the complete bipartite subgraph of $G_1$ with $U_1' =  \{ u \in U_1 : uv_{Y_1} \text{ is an edge of } G_1 \}$ and $V_1' =  \{ v_{Y_1} \}$.  According to Theorem~\ref{teoremapaper1}, the associated nucleation time $\mathcal{T}_{v_{Y_1}}^{Q^0}$ satisfies, for any $u \in U_1'$,
\begin{equation}
\mathbb{E}_{1_U} [\mathcal{T}_{v_{Y_1}}^{Q^0}] 
= F^1 \, (Q_u^0)^{1 \wedge \beta(\bar{d}_1-1)}\, [1+o(1)],  \qquad r \to \infty.
\end{equation}

Reasoning as above, we see that the algorithm picks as second node a node $v_{Y_2}$ with the least number of active neighbors left in $G$. Consider the bipartite graph $G_2 = ((U_2, V_2), E_2)$ with $U_2 = U_1 \setminus U_1'$, $V_2 = V_1 \setminus V_1' = V_1 \setminus \{v_{Y_1} \}$ and $E_2 = \{uv: u \in U_2, v \in V_2 \}$. If we denote by $d_2(v)$ the degree of a node $v \in V_2$ in $G_2$, then $v_{Y_2}$ is such that $d_2(v_{Y_2}) = \bar{d}_2 = \min_{v \in V_2} d_2(v)$. If there are multiple nodes with the same least degree, then the algorithm again chooses one uniformly at random. If the least degree $\bar{d}_2$ is such that $\beta(\bar{d}_2 -1) > 1$, then we choose a node uniformly at random among all nodes in $V_2$. Let $G_2'(U_2',V_2')$ be the complete bipartite subgraph of $G$ with $U_2' =  \{u \in U_2\colon\, uv_{Y_2} \text{ is an edge of } G_2 \}$ and $V_2' =  \{ v_{Y_2} \}$. The associated nucleation time $ \mathcal{T}_{v_{Y_2}}^{Q^1}$ satisfies, for any $u \in U_2'$,
\begin{equation}
\mathbb{E}_{1_U} [\mathcal{T}_{v_{Y_2}}^{Q^1}] = F^2 \, (Q_u^1)^{1 \wedge \beta(\bar{d}_2-1)}\, [1+o(1)],  
\qquad r \to \infty.
\end{equation}

Iterating this procedure until all the nodes in $V_1$ are active, we find an admissible path. Note that, depending on the choice the algorithm makes at each step, there may be multiple admissible paths.


\subsection{Greediness and consistency}
\label{subsec:grconst}

We first prove Lemma~\ref{lemmaab}. After that we prove Propositions~\ref{greedinessprop} and \ref{consistencyprop}.

\begin{proof}[Proof of Lemma~\ref{lemmaab}]
The proof is by contradiction. Suppose that $d_a^* > d_b^*$. Denote by $d_{k,a}(v)$ and $d_{k,b}(v)$ the degrees of node $v \in V_k$ at step $k=1, \dots, N$ in paths $a$ and $b$, respectively.

Consider the node $w_1 \in V$ such that, at some step $k_1^a$ in path $a$, $d_{k_1^a,a}(w_1) = \bar{d}_{k_1^a,a} = d_a^*$. Then $d(w_1) \geq d_a^*$ in $G$. On the other hand, in path $b$, when $w_1$ is activated at some step $k_1^b$, it has degree $d_{k_1^b,b}(w_1) \leq d_b^*$. This implies that some of the edges of $w_1$ (at least $d_a^* - d_b^*$ edges) have already been processed via previous forks in path $b$. At least one of these forks must have nucleated before the fork of $w_1$, in path $b$ but not in path $a$, say, the fork of $w_2$. Hence there exists a node $w_2 \in V$ such that, at some step $k_2^b < k_1^b$ in path $b$, $d_{k_2^b,b}(w_2) \leq d_b^*$. This node has not yet been activated at step $k_1^a$ in path $a$, so it must be that $d_{k_1^a,a}(w_2) \geq d_a^*$, otherwise the algorithm would choose node $w_2$ before node $w_1$. Say that node $w_2$ will be activated at step $k_2^a > k_1^a$ in path $a$. Then, $d(w_2) \geq d_a^*$ in $G$. As before, this implies that some of its edges have already been processed with previous forks in path $b$. Again, at least one of these forks must have nucleated before the fork of $w_2$, in path $b$ but not in path $a$, say, the fork of $w_3$. Hence there exists a node $w_3 \in V$ such that, at some step $k_3^b < k_2^b$ in path $b$, $d_{k_3^b,b}(w_3) \leq d_b^*$. This node has not yet been activated at step $k_2^a$ in path $a$, nor at step $k_1^a$, so $d_{k_1^a,a}(w_3) \geq d_{k_1^a,a}(w_1)\geq d_a^*$, otherwise the algorithm would choose node $w_3$ before node $w_1$. Hence $d(w_3) \geq d_a^*$ in $G$. 

We can iterate this argument. Since there are only $N$ nodes in $V$, we get a contradiction after we have considered all the nodes. 
\end{proof}

We are now able to prove the greediness and the consistency of the algorithm.

\begin{proof}[Proof of Proposition~\ref{greedinessprop}]
By Lemma~\ref{lemmaab}, we know that the maximum least degree of an admissible path is the smallest possible. We know that the order of the mean transition time along a path is related to $d^*$ and depends on the value of $\beta$. Hence, Lemma~\ref{lemmaab} implies that the mean transition time along an admissible path is the shortest possible, in the sense that it has the smallest order of $r$ possible.
\end{proof}

\begin{proof}[Proof of Proposition~\ref{consistencyprop}]
Lemma~\ref{lemmaab} proves equality for any two admissible paths. This leads to the same order of the mean transition time.
\end{proof}

Despite the fact that $d^*$ does not depend on which admissible path the algorithm generates, its multiplicity does. Fig.~\ref{fig:multi} shows a graph on which the algorithm can generate two different paths with the same maximum least degree but with different multiplicities. 

\begin{figure}[htbp]
\begin{center}
\begin{tikzpicture}[scale = 0.9]
\draw[fill] (0,0) circle (0.1);
\draw[fill] (0,0.6) circle (0.1);
\draw[fill] (0,1.2) circle (0.1);
\draw[fill] (0,1.8) circle (0.1);
\draw[fill] (0,2.4) circle (0.1);
\draw[fill] (0,3) circle (0.1);
\draw[fill] (0,3.6) circle (0.1);
\draw[fill] (3,0.6) circle (0.1);
\draw[fill] (3,1.8) circle (0.1);
\draw[fill] (3,3) circle (0.1);
\node [left] at (-0.2,0) {$u_7$};
\node [left] at (-0.2,0.6) {$u_6$};
\node [left] at (-0.2,1.2) {$u_5$};
\node [left] at (-0.2,1.8) {$u_4$};
\node [left] at (-0.2,2.4) {$u_3$};
\node [left] at (-0.2,3) {$u_2$};
\node [left] at (-0.2,3.6) {$u_1$};
\node [right] at (3.2,0.6) {$v_3$};
\node [right] at (3.2,1.8) {$v_2$};
\node [right] at (3.2,3) {$v_1$};
\draw[thick] (0.2,3.55) -- (2.8,3.05);
\draw[thick] (0.2,3) -- (2.8,3);
\draw[thick] (0.2,2.45) -- (2.8,2.95);
\draw[thick] (0.2,3.5) -- (2.8,1.9);
\draw[thick] (0.2,2.95) -- (2.8,1.85);
\draw[thick] (0.2,2.4) -- (2.8,1.8);
\draw[thick] (0.2,1.8) -- (2.8,1.75);
\draw[thick] (0.2,1.25) -- (2.8,1.7);
\draw[thick] (0.2,1.15) -- (2.8,0.65);
\draw[thick] (0.2,0.6) -- (2.8,0.6);
\draw[thick] (0.2,0.05) -- (2.8,0.55);
\end{tikzpicture}
\end{center}
\caption{\small The algorithm may generate the path $v_1, v_2, v_3$ or the path $v_3, v_1, v_2$ with different 
multiplicity of $d^*$.}
\label{fig:multi}
\end{figure}
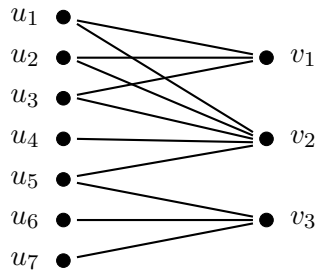

\subsection{Algorithm complexity}
\label{subsec:complexity}

The randomized greedy algorithm we constructed can be implemented in different ways according to what we want to compute.

\begin{itemize}
\item 
In order to know the leading order of the mean transition time as $r \to \infty$, it is enough to recover the maximum least degree $d^*$ from the graph. By Proposition~\ref{consistencyprop} we know that $d^*$ is the same for all the admissible paths. Hence it is enough to run the algorithm once and, by comparing the value of $d^*$ with the value of $\beta$, we are able to determine whether we are in the subcritical, the critical or the supercritical regime.

In this case the \emph{computational complexity of the algorithm is polynomial} in the number of nodes in $V$, and so the leading order of the mean transition time is quickly determined. More precisely, the algorithm has a complexity of $\mathcal{O}(|U| |V|^2)$.
\item 
If we are interested in the precise asymptotics of the mean transition time and in its law as $r \to \infty$, then we need to compute the pre-factor of the leading order term. To do so, we need to run the algorithm multiple times, until all the admissible paths are generated, in order to recover all the possible sequences $(\bar{d}_k)_{k=1}^N$ and $(n_k)_{k=1}^N$. A proper approach is to let a (deterministic) depth-first search algorithm run through all the admissible paths and enumerate them. Theorem~\ref{mostlikelypaths} shows that if we know the total mean transition time along each path, then we can recover the mean transition time of the graph. 

In this case the \emph{computational complexity of the algorithm is factorial} in the number of nodes in $V$, since it depends in a delicate manner on the architecture of the graph. More precisely, the algorithm has a complexity of $\mathcal{O}(|U| |V|^2 |V|!)$.
\end{itemize}

See \cite{vdV19} for a deeper analysis of the algorithm complexity. 


\section{Proofs of the main theorems}
\label{sec:thproof}

The aim of this section is to prove the theorems in Section~\ref{sec:theorems}. In Section~\ref{subsec:prelim} we introduce some further definitions. In Section~\ref{subsec:prtwolem} we prove Lemmas~\ref{blockedU} and \ref{lemmavw}. In Section~\ref{subsec:nextnucl} we prepare for the proof of the main theorems (Propositions~\ref{meannextnucleationtime} and \ref{lawnextnucleationtime} below). In Sections~\ref{subsec:mlp}--\ref{subsec:law} we prove Theorems~\ref{mostlikelypaths}, \ref{meantransitiontime} and \ref{law}, respectively. Throughout the section, recall the notation discussed in Remark~\ref{conditionalrmk}.


\subsection{Preparatory results}
\label{subsec:prelim}

Consider an arbitrary bipartite graph $G= ((U,V),E)$ with $|V| = N$ and let $v_1, \dots, v_N$ be the nodes in $V$. The activation path that the network follows is denoted by $v_1^*,  \dots, v_N^*$, while the indices of the nodes that the algorithm picks are denoted by $Y_1, \dots, Y_N$ (as in Definition~\ref{defYk}). We want to study the transition time when the network follows an admissible path. When conditioning the network on a specific activation order, we can write
\begin{equation}
\{ Y_k = i \} = \{ v_k^* = v_i\},
\end{equation}
in the sense that saying that the $k$-th index $Y_k$ chosen by the algorithm equals $i$ is equivalent to saying that the $k$-th node $v_k^*$ activating in the network equals $v_i$.

\begin{definizione}{\bf [Iteration graph]}
For $k = 1, \dots, N$, suppose that $k-1$ nodes in $V$ have already been activated. Denote by $G_k=((V_k,U_k), E_k)$ the subgraph of $G=((U,V),E)$ consisting of:
\begin{itemize}
\item $V_k \subseteq V$, the set of nodes in $V$ that have not been activated yet, i.e., $V_k =  V \setminus \{\{v_{Y_i}\}_{0 < i < k}\}$.
\item $U_k \subseteq U$, the set of nodes in $U$ that are not neighbors of any of the nodes in $V$ that have already been activated, i.e., $U_k = U \setminus \bigcup_{0 < i < k} N(v_{Y_i})$ (recall \eqref{neighbors}).
\item $E_k \subseteq E$, the set of edges between $U_k$ and $V_k$, i.e., $E_k = \{ uv: u \in U_k, v \in V_k\}$.
\end{itemize}
Let $\bar{d}_k$ be the minimum degree of the nodes in $V_k$ and $n_k$ be the number of least degree forks in $G_k$.
\end{definizione}

\begin{definizione}{\bf [Minimum degree subset]}
\label{mindegsubset}
Define the set of nodes with minimum degree in $V$ as 
\begin{equation}
M(V) = \{ v' \in V\colon\, d(v') = \mathrm{min}_{v \in V} d(v) \}.
\end{equation}
\end{definizione}

\begin{lemma}{\bf [Probability of choosing the next node]} 
\label{lemmaYk}
Given the graph $G_k$, in the subcritical and the critical regime, the probability that the next node activating in $V_k$ is node $v_i$ is
\begin{equation}
\mathbb{P}(Y_k = i) = 
\begin{cases}
\frac{1}{n_k}, & \mathrm{if} \, \beta (\bar{d}_k -1) \leq 1,\, v_i \in M(V_k), \\ 
0, & \mathrm{if} \, \beta (\bar{d}_k -1) \leq 1,\, v_i \in V_k \setminus M(V_k), \\ 
\end{cases}
\end{equation}
which depends on the sequence of nodes already active in $V$. 
\end{lemma}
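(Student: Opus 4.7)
The plan is to decompose the statement into its two cases and rely on previously established lemmas. I interpret $\mathbb{P}(Y_k = i)$ as $\lim_{r \to \infty} \mathbb{P}^r(v_k^* = v_i)$, i.e., the limiting probability that the network's $k$-th activation is node $v_i$; in this reading, the lemma asserts that in the subcritical and critical regimes the network at each step selects uniformly among the minimum-degree remaining nodes, matching the algorithm.

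For $v_i \in V_k \setminus M(V_k)$, I would apply Lemma~\ref{lemmavw} directly. Fix any $v_j \in M(V_k)$ (nonempty by definition), and observe that $\{Y_k = i\} \subseteq \{\mathcal{T}_{v_i}^{Q^{k-1}} < \mathcal{T}_{v_j}^{Q^{k-1}}\}$. Since $d_k(v_i) > \bar{d}_k = d_k(v_j)$ and $\beta(\bar{d}_k - 1) \leq 1$, Lemma~\ref{lemmavw} forces the probability of the right-hand event to $0$, yielding $\mathbb{P}(Y_k = i) = 0$.

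For $v_i \in M(V_k)$, the previous case combined with a union bound over the finitely many nodes outside $M(V_k)$ ensures that with $\mathbb{P}^r$-probability tending to $1$ the activating node $v_k^*$ lies in $M(V_k)$. Among the $n_k$ candidates I would then use two ingredients. First, Proposition~\ref{aindep} lets me treat the nucleation times $\{\mathcal{T}_v^{Q^{k-1}}\}_{v \in M(V_k)}$ as asymptotically independent, even when the corresponding forks share $U$-nodes. Second, by Theorem~\ref{teoremapaper1} combined with the good behavior of the $U$-queue lengths (Remark~\ref{gbremark}), all $n_k$ nucleation times share the same asymptotic law, since every such fork has common degree $\bar{d}_k$ and its $U$-nodes all carry queue lengths equal to $Q_U^{k-1}$ up to a factor $1 + o(1)$. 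A standard symmetry argument on the minimum of i.i.d.\ continuous random variables then gives probability $1/n_k$ to each node of $M(V_k)$. The main obstacle is precisely this second case: forks of different least-degree nodes may overlap in their $U$-nodes, so a priori their nucleation times are dependent, and Proposition~\ref{aindep} is the tool designed to render these dependencies asymptotically negligible.
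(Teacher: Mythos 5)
Your proof is mathematically sound, but it proves a different (stronger, dynamical) statement than the paper proves, because you and the paper read the symbol $\mathbb{P}(Y_k = i)$ differently. The paper's proof is a one-line observation: $Y_k$ is the index picked by the \emph{randomized algorithm} of Definition~\ref{defYk}, which by construction chooses uniformly at random among the $n_k$ nodes of minimum degree in $G_k$ (and never picks a non-minimum-degree node while $\beta(\bar d_k - 1) \leq 1$), so $\mathbb{P}(Y_k = i) = 1/n_k$ for $v_i \in M(V_k)$ and $0$ otherwise simply by fiat. Nothing about the stochastic dynamics enters; the proof is purely a count. You instead interpret $\mathbb{P}(Y_k = i)$ as $\lim_{r\to\infty}\mathbb{P}^r(v_k^* = v_i)$, the limiting probability that the \emph{network} activates $v_i$ at step $k$, and you then establish this by invoking Lemma~\ref{lemmavw} (to rule out non-minimum-degree nodes via a union bound) and Proposition~\ref{aindep} together with Theorem~\ref{teoremapaper1} and Remark~\ref{gbremark} (to get asymptotic exchangeability of the $n_k$ least-degree nucleation times, hence a uniform $1/n_k$ split by symmetry). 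That argument is essentially the content that the paper packages separately into Theorem~\ref{mostlikelypaths}(i) and its supporting machinery; the paper deliberately keeps Lemma~\ref{lemmaYk} as a trivial algorithmic fact and defers the ``network follows the algorithm'' equivalence (via the identification $\{Y_k = i\} = \{v_k^* = v_i\}$ under $\mathcal{E}(a^*)$) to Section~\ref{subsec:mlp}. What your route buys is a self-contained proof that the physical system really does spread mass uniformly over $M(V_k)$; what the paper's route buys is a clean separation of concerns, where this lemma is definitional and the dynamical justification lives elsewhere. Either way the conclusion you reach is correct, and your use of Lemma~\ref{lemmavw} and Proposition~\ref{aindep} is exactly the right toolkit for the dynamical version.
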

\begin{proof} 
By construction, the algorithm picks nodes in $M(V_k)$ before it picks nodes in $V_k \setminus M(V_k)$. It is therefore enough to count the number of forks of minimum degree at step $k$, which is $n_k$.
\end{proof}


\subsection{Proof: activation sticks and selects low degrees}
\label{subsec:prtwolem}

We next prove two lemmas from Section~\ref{sec:alg} that will be needed to prove Theorem~\ref{mostlikelypaths} in Section~\ref{subsec:mlp}.

\begin{proof}[Proof of Lemma~\ref{blockedU}]
Recall that in \eqref{aggress1} we assumed the activation rates to be $g_U(x) =Bx^{\beta}$ and $g_V(x) = B' x^{\beta'}$, with $B,B',\beta, \beta' \in (0, \infty)$ and $\beta' > \beta +1$. We claim that if a node $u \in U$ deactivates and one of its neighbors in $V$ activates at time $t_u$, then with high probability it will not activate anymore after time $t_u$. The moments when $u$ could possibly activate again are the moments when all its neighbors in $V$ are simultaneously inactive. We consider the worst case scenario when $u$ has only one active neighbor $v \in V$. Denote by $t_v$ the first moment when $v$ deactivates after $t_u$. This happens many times, since the activity period of a node is described by an exponential variable $Z$ with rate $1$. In contrast, the inactivity periods are very short, since the nodes in $V$ are very aggressive and the activation rates grow with the queue lengths, which tend to infinity as $r\to\infty$. We consider a time period of length equal to the total transition time, and we assume the transition time to be the longest possible (of order $r$). Then on average we have a number of possibilities for $u$ to activate that is bounded from above by 
\begin{equation}
\label{competitions}
\frac{\mathbb{E}[\mathcal{T}_G^{Q^0}]}{\mathbb{E}[Z]} = \mathbb{E}[\mathcal{T}_G^{Q^0}] 
\asymp r, \qquad r \to \infty,
\end{equation}
At each of these times, nodes $u$ and $v$ are both inactive and are competing with each other to activate again. Denote by $Z_u$ and $Z_v$ the lengths of the first inactivity periods of $u$ and $v$, respectively, after time $t_v$. Note that, since the queue lengths are slightly changing, $Z_u$ and $Z_v$ are not exactly exponentially distributed, but we can approximate them by comparing them with upper and lower-bounding exponentials via thinning and superposition of Poisson processes. 
We then have that with high probability node $v$ activates before node $u$, since
\begin{equation}
\label{probwinning}
\begin{aligned}
\mathbb{P}(Z_u < Z_v) &=  [1+o(1)]\,\frac{g_U(Q_u(t_v))}{g_U(Q_u(t_v)) + g_V(Q_v(t_v))}\\ 
&= [1+o(1)]\, \frac{Kr^{\beta}}{Kr^{\beta} + K'r^{\beta'}} = 
[1+o(1)]\, \frac{1}{(K'/K)r^{(\beta'-\beta)}} = o\bigg(\frac{1}{r}\bigg), \qquad r \to \infty,
\end{aligned}
\end{equation}
where we use that $\beta'>\beta+1$, and $K,K'$ are positive constants. After the first competition, the winner of subsequent competitions at times $t > t_v$ is determined by the minimum of two random variables (inactivity periods) with rates $g_U(Q_u(t))$ and $g_V(Q_v(t))$. Note that the queue lengths in $U$ are always of order $r$, except when we are in the supercritical regime. In this regime we are not interested in the competition between $u$ and $v$ anymore, since we know how long the transition takes. The queue lengths in $V$ start being of order $r$, increase while $u$ is active and decrease when $v$ is active, but remain always of order $r$. Indeed, in order for the queue lengths in $V$ to become $o(r)$ at least time $\frac{\gamma_V}{c- \rho_V} r \, [1+o(1)]$ must have passed, but the transition takes at most time $T_U(r) = \frac{\gamma_U}{c-\rho_U} r \, [1+o(1)]$ which is always smaller by \eqref{assumptionparameters}. 
Hence, in every subsequent competition $v$ activates first, with high probability, since the probability of $u$ winning is always $o \big(\frac{1}{r}\big)$. In the worst case scenario, nodes $u$ and $v$ compete with each other for the duration of the transition, hence, by \eqref{competitions}, order $r$ times. The probability of $u$ winning at least one competition is $r \, o\big(\frac{1}{r}\big) = o(1)$. Hence, with high probability, node $u$ will never win any competition against node $v$ and will remain blocked for the duration of the transition.
\end{proof}

\begin{proof}[Proof of Lemma~\ref{lemmavw}] 
We distinguish between $\beta (\bar{d}_k -1) < 1$ and $\beta (\bar{d}_k -1) = 1$.

\medskip\noindent
(I) $\beta (\bar{d}_k -1) < 1$. 
Recall from Theorem~\ref{teoremapaper1} that the expected time it takes for node $v$ to nucleate, which is asymptotically equal to the time it takes for its $\bar{d}_k$ neighbors in $U$ to become simultaneously inactive, is of order $r^{\beta(\bar{d}_k -1)}$ and satisfies
\begin{equation}
\label{eq1}
\lim_{r \to \infty} \mathbb{P}_{1_U} \bigg( \frac{\mathcal{T}_v^{Q^{k-1}}}{\mathbb{E}_{1_U}[\mathcal{T}_v^{Q^{k-1}}]} > x \bigg) 
= \mathcal{P}_{\mathrm{sub}}(x) = e^{-x}, \qquad x \in [0,\infty).
\end{equation}
Similarly, the expected time it takes for node $w$ to nucleate is asymptotically equal to the time it takes for its $d_k(w)$ neighbors in $U$ to become simultaneously inactive: it is of order $r^{\beta(d_k(w) -1)} \succ r^{\beta(\bar{d}_k -1)}$ if $\beta (d_k(w) -1) < 1$, and of order $r\succ r^{\beta(\bar{d}_k -1)}$ if $\beta (d_k(w) -1) \geq 1$. It also satisfies
\begin{equation}
\label{eq2}
\lim_{r \to \infty} \mathbb{P}_{1_U} \bigg( \frac{\mathcal{T}_w^{Q^{k-1}}}
{\mathbb{E}_{1_U}[\mathcal{T}_w^{Q^{k-1}}]} > x \bigg) 
= \mathcal{P}(x), \qquad x \in [0,\infty), 
\end{equation}
with $\mathcal{P}(x) \uparrow 1$ when $x \downarrow 0$ and $\mathcal{P}(x) \downarrow 0$ when $x \uparrow \infty$. Note that, in the limit as $r \to \infty$, the forks of $v$ and $w$ can be treated as independent of each other. Indeed, in Section~\ref{subsec:aindep} we proved that the nucleation time of a fork is asymptotically not influenced by the behavior of other forks sharing nodes with it.

For any given value of $t$, $\mathcal{T}_v^{Q^{k-1}} \leq t$ and $\mathcal{T}_w^{Q^{k-1}} \geq t$ imply $\mathcal{T}_v^{Q^{k-1}} \leq \mathcal{T}_w^{Q^{k-1}}$, i.e., $\mathcal{T}_w^{Q^{k-1}} < \mathcal{T}_v^{Q^{k-1}}$ implies $\mathcal{T}_v^{Q^{k-1}} > t$ or $\mathcal{T}_w^{Q^{k-1}} < t$, and thus
\begin{equation}
\mathbb{P}_{1_U} (\mathcal{T}_w^{Q^{k-1}} < \mathcal{T}_v^{Q^{k-1}}) \leq  \mathbb{P}_{1_U}(\mathcal{T}_v^{Q^{k-1}} > t) + \mathbb{P}_{1_U}(\mathcal{T}_w^{Q^{k-1}} < t).
\end{equation}
By \eqref{eq1}-\eqref{eq2}, when choosing $t=\sqrt{\mathbb{E}_{1_U}[\mathcal{T}_v^{Q^{k-1}}] \mathbb{E}_{1_U}[\mathcal{T}_w^{Q^{k-1}}]}$ and taking the limit $r \to \infty$, the right-hand side tends to 0, since $t \succ \mathbb{E}_{1_U}[\mathcal{T}_v^{Q^{k-1}}]$ and $t \prec\mathbb{E}_{1_U}[\mathcal{T}_w^{Q^{k-1}}]$.\\
(II) $\beta (\bar{d}_k -1) = 1$.
As before, we know the law of the nucleation time for the fork of $v$ (critical) and $w$ (supercritical). As shown in \cite{BdHNS18}, with high probability $\mathcal{T}_w^{Q^{k-1}}/\mathbb{E}_{1_U}[\mathcal{T}_w^{Q^{k-1}}]$ tends to $1$. Moreover, with high probability any nucleation time of a complete bipartite graph in the critical regime (including the fork of $v$) is smaller than the transition time of the same graph in the supercritical regime. 

\end{proof}


\subsection{Proof: most likely paths} 
\label{subsec:mlp}

\begin{proof}[Proof of Theorem~\ref{mostlikelypaths}]
We prove the three statements separately.

\medskip\noindent
(i) Assuming that the network does not follow the greedy algorithm is equivalent to assuming that at some step $k$ with $\beta (\bar{d}_k -1) \leq 1$ a node $w$ that does not have a minimum degree is chosen instead of a node $v$ with degree $\bar{d}_k$. The probability of a group of $d > \bar{d}_k$ nodes being simultaneously inactive before a group of $\bar{d}_k$ nodes is equivalent to the probability of activating $w$ before $v$, which satisfies
\begin{equation}
\lim_{r \to \infty} \mathbb{P}_{1_U} \big(\mathcal{T}_w^{Q^{k-1}} < \mathcal{T}_v^{Q^{k-1}}\big) = 0
\end{equation}
by Lemma~\ref{lemmavw}. Hence, with high probability the network activates nodes in $V$ in a greedy way, as described by the algorithm. By Lemma~\ref{blockedU}, we also know that the nodes in $U$ that have deactivated remain inactive for the duration of the transition process. Consequently, they do not influence any future activation attempt of the nodes in $V$, whose activation therefore follows the algorithm. In the supercritical regime, we are only interested in the order of activation of the nodes until the first supercritical node, for which the above reasoning still holds. 

\medskip\noindent
(ii) Note that the queue lengths $Q^k$ depend on the sequence of indices $(Y_1, \dots, Y_{k-1})$ describing the order of the activating nodes in $V$. Indeed, we have seen in Section~\ref{subsec:uql} that the queue lengths change according to which nodes have already been activated. Moreover, for $k > 1$, also the probabilities $\frac{1}{n_k}$ depend on the sequence $(Y_1, \dots, Y_{k-1})$. The reader should keep this in mind while going through the proof. The proof evolves in three steps.

\medskip\noindent
{\bf 1.}
Denote the graph $G=((U,V),E)$ by $G_1=((U_1,V_1),E_1)$. Write
\begin{equation} \label{first}
\mathbb{E}_{1_U}[\mathcal{T}_G^{Q^0}\mathbbm{1}_{\mathcal{E}(a^*)}] = \mathbb{E}_{1_U}[\mathcal{T}_{G_1}^{Q^0} \mathbbm{1}_{\mathcal{E}(a^*)}] = \sum_{i_1: \, v_{i_1} \in V_1}  \mathbb{E}_{1_U}[\mathcal{T}_{G_1}^{Q^0} \mathbbm{1}_{\mathcal{E}(a^*)} \,|\, Y_1 = i_1 ] \, \mathbb{P}(Y_1 = i_1).
\end{equation}
By Lemma~\ref{lemmaYk}, when $\beta (\bar{d}_1 -1) \leq 1$, not all the terms in the above sum have positive probability, but only the ones corresponding to forks of minimum degree $\bar{d}_1$ do (and they all have the same probability). Recall that this probability is $\frac{1}{n_1}$. Also recall that $\mathbb{E}_Q$ averages over the random values $Q^1, \dots, Q^{N-1}$ of the updated queue lengths. We can write the random variable $\mathcal{T}_{G_1}^{Q^0}$ as sum of three random variables 
\begin{equation}
\mathcal{T}_{G_1}^{Q^0} = \bar{\tau}_1 + R_{U_2}^1 + \mathbb{E}_Q[\mathcal{T}_{G_2}^{Q^1}],
\end{equation}
where $G_2 = ((U_2,V_2), E_2)$ with $U_2 = U_1 \setminus N(v_{Y_1})$, $V_2 = V_1 \setminus \{ v_{Y_1}\}$ and $E_2 = E_1 \setminus \{ (u,v): u \in N(v_{Y_1}) \}$, while $Q^1 = Q(\bar{\tau}_1 + R_{U_2}^1 )$. The first variable represents the time the network takes to switch the first node on, the second variable represents the time the network takes (after activating the first node) to reach the configuration with all the nodes in $U_2$ active (see Lemma~\ref{returntime}), while the third variable represents the transition time of the remaining graph when we take the first activating node out. Note that, by Corollary~\ref{pk}, if we condition the network to follow an admissible path with a specific first activating node, then we get
\begin{equation}
\mathbb{E}_{1_U}[\bar{\tau}_1 \,|\, Y_1 = i_1] = f_1 \,\mathbb{E}_{1_U}[\mathcal{T}_{v_{i_1}}^{Q^0}],
\end{equation}
where $f_1$ is the factor that arises from considering the minimum of random variables. Also the variable $\mathcal{T}_{G_2}^{Q^1}$ changes accordingly, but with an abuse of notation we may write it in the same way. Thus,
\begin{equation}
\begin{split}
\mathbb{E}_{1_U}\big[\mathcal{T}_{G_1}^{Q^0} \mathbbm{1}_{\mathcal{E}(a^*)} \,|\,  Y_1 = i_1\big] 
&= \mathbb{E}_{1_U}\big[(\bar{\tau}_1+ R_{U_2}^1 + \mathbb{E}_Q[\mathcal{T}_{G_2}^{Q^1}])\,|\,\mathcal{E}(a^*) \cap \{Y_1 = i_1 \}\big] \\
&= \mathbb{E}_{1_U}[\bar{\tau}_1\mathbbm{1}_{\mathcal{E}(a^*)}\,|\,Y_1 = i_1 ] 
+ o(1) + \mathbb{E}_{1_U}\big[\mathbb{E}_Q[\mathcal{T}_{G_2}^{Q^1}]\mathbbm{1}_{\mathcal{E}(a^*)}\,|\,Y_1 = i_1\big] \\
&= f_1 \,\mathbb{E}_{1_U}\big[\mathcal{T}_{v_{i_1}}^{Q^0}\mathbbm{1}_{\mathcal{E}(a^*)}\big] 
+ o(1) + \mathbb{E}_{1_U}\big[\mathbb{E}_Q[\mathcal{T}_{G_2}^{Q^1}]\mathbbm{1}_{\mathcal{E}(a^*)}\big], \qquad r \to \infty,
\end{split}
\end{equation}
and this holds with high probability due to Lemma~\ref{returntime}. We want to analyze the latter in a recursive way. The $k$-th iteration gives  
\begin{equation} \label{iter1}
\mathbb{E}_{1_U}\big[\mathbb{E}_Q[\mathcal{T}_{G_k}^{Q^{k-1}}]\mathbbm{1}_{\mathcal{E}(a^*)}\big] 
= \mathbb{E}_Q\Bigg[\sum_{i_k: \,v_{i_k} \in V_k}
\mathbb{E}_{1_U}\big[\mathcal{T}_{G_k}^{Q^{k-1}} \mathbbm{1}_{\mathcal{E}(a^*)}\,|\,Y_k = i_k\big] \,  \mathbb{P}(Y_k = i_k)\Bigg] .
\end{equation}

\medskip\noindent
{\bf 2.}
We can again write the random variable $\mathcal{T}_{G_k}^{Q^{k-1}}$ as sum of three random variables 
\begin{equation}
\mathcal{T}_{G_k}^{Q^{k-1}} = \bar{\tau}_k + R_{U_{k+1}}^k+ \mathbb{E}_Q[\mathcal{T}_{G_{k+1}}^{Q^k}],
\end{equation}
where $G_{k+1} = ((U_{k+1},V_{k+1}), E_{k+1})$ with $U_{k+1} = U_k \setminus N(v_{Y_k})$, $V_{k+1} = V_k \setminus \{  v_{Y_k}\}$ and $E_{k+1} = E_k \setminus \{ (u,v)\colon\, u \in N(v_{Y_k}) \}$, while $Q^k = Q(\sum_{j = 1}^{k} \bar{\tau}_j + R_{U_{j+1}}^j)$. By Corollary~\ref{pk}, we again have that 
\begin{equation}
\mathbb{E}_{1_U}[\bar{\tau}_k \,|\, Y_k = i_k] = f_k \,\mathbb{E}_{1_U}\big[\mathcal{T}_{v_{i_k}}^{Q^{k-1}}\big],
\end{equation}
and also the variable $\mathcal{T}_{G_{k+1}}^{Q^k}$ changes accordingly when it is conditioned (again, with an abuse of notation we write it in the same way). The inner conditional expectation in \eqref{iter1} can be written as 
\begin{equation} \label{iter2}
\begin{split}
\mathbb{E}_{1_U}\big[\mathcal{T}_{G_k}^{Q^{k-1}} \mathbbm{1}_{\mathcal{E}(a^*)}\,|\,Y_k = i_k\big] 
&= \mathbb{E}_{1_U}\big[(\bar{\tau}_k + R_{U_{k+1}}^k+ \mathbb{E}_Q[\mathcal{T}_{G_{k+1}}^{Q^k}])\mathbbm{1}_{\mathcal{E}(a^*)}\,|\,Y_k = i_k\big] \\
&= \mathbb{E}_{1_U}\big[\bar{\tau}_k\mathbbm{1}_{\mathcal{E}(a^*)}\,|\,Y_k = i_k\big] 
+ o(1) + \mathbb{E}_{1_U}\big[\mathbb{E}_Q[\mathcal{T}_{G_{k+1}}^{Q^k}]\mathbbm{1}_{\mathcal{E}(a^*)}\,|\,Y_k = i_k\big] \\
&=  f_k\, \mathbb{E}_{1_U}\big[\mathcal{T}_{v_{i_k}}^{Q^{k-1}}\mathbbm{1}_{\mathcal{E}(a^*)}] 
+ o(1) + \mathbb{E}_{1_U}[\mathbb{E}_Q[\mathcal{T}_{G_{k+1}}^{Q^k}]\mathbbm{1}_{\mathcal{E}(a^*)}\big], \qquad r \to \infty,
\end{split}
\end{equation}
which holds with high probability due to Lemma~\ref{returntime}.
At each iteration the conditional expectation reduces to a sum of three terms: the first term represents the expected time it takes to switch the following node on (adjusted by a factor that keeps track of the fact that the node activates before the other nodes), the second term represents the expected time the network takes (after activating the previous node) to reach the configuration with all the nodes remaining in $U$ active, while the third term represents the mean transition time of the remaining network when we take the following activating node out.

\medskip\noindent
{\bf 3.}
Note that, for each $k = 1, \dots, N$, the graph $G_{k+1}$ depends on the sequence of indices $(Y_1, \dots, Y_{k})$. Moreover, we know that also the queue lengths $Q^k$ depend on the indices $(Y_1, \dots, Y_{k-1})$. Thus, all the conditional expectations depend on the sequence of indices of activated nodes. Recall Definition~\ref{mindegsubset} and Lemma~\ref{lemmaYk}, by which the first iteration comes with a probability $\frac{1}{n_1}$ of choosing the first node activating, while each iteration with $k>1$ comes with a probability $\frac{1}{n_k}$, also depending on the sequence $(Y_1, \dots, Y_{k-1})$. After $k=2$ steps, using \eqref{iter1} and \eqref{iter2}, with high probability
\begin{equation}
\begin{split}
&\mathbb{E}_{1_U}\big[\mathcal{T}_{G_1}^{Q^0}\mathbbm{1}_{\mathcal{E}(a^*)}\big]\\ 
&= \sum_{i_1: \, v_{i_1} \in M(V_1)} \frac{1}{n_1} \, \mathbb{E}_{1_U}\big[\mathcal{T}_{G_1}^{Q^0} 
\mathbbm{1}_{\mathcal{E}(a^*)}\,|\,Y_1 = i_1\big] \\
&= \sum_{i_1: \, v_{i_1} \in M(V_1)} \frac{1}{n_1} \, \bigg(f_1\, \mathbb{E}_{1_U}\big[\mathcal{T}_{v_{i_1}}^{Q^0}
\mathbbm{1}_{\mathcal{E}(a^*)}\big] +o(1)
+ \mathbb{E}_{1_U}\big[\mathbb{E}_Q[\mathcal{T}_{G_2}^{Q^1}]\mathbbm{1}_{\mathcal{E}(a^*)}\big]\bigg) \\
&= \sum_{i_1: \, v_{i_1} \in M(V_1)} \frac{1}{n_1} \, \Bigg(f_1\, \mathbb{E}_{1_U}\big[\mathcal{T}_{v_{i_1}}^{Q^0}\mathbbm{1}_{\mathcal{E}(a^*)}\big] +o(1)+ \mathbb{E}_Q\Bigg[\sum_{i_2: \, v_{i_2} \in M(V_2)} \frac{1}{n_2} \, \mathbb{E}_{1_U}\big[\mathcal{T}_{G_2}^{Q^1}
 \mathbbm{1}_{\mathcal{E}(a^*)}\,|\,Y_2 = i_2\big] \Bigg] \Bigg) \\
&=  \sum_{i_1: \, v_{i_1} \in M(V_1)} \frac{1}{n_1} \, \Bigg( f_1\, \mathbb{E}_{1_U}\big[\mathcal{T}_{v_{i_1}}^{Q^0}\mathbbm{1}_{\mathcal{E}(a^*)}\big] +o(1) \\
&\qquad \,\,\,\,+ \mathbb{E}_Q\Bigg[\sum_{i_2: \, v_{i_2} \in M(V_2)} \frac{1}{n_2} \, \left(f_2\, 
\mathbb{E}_{1_U}\big[\mathcal{T}_{v_{i_2}}^{Q^1}\mathbbm{1}_{\mathcal{E}(a^*)}\big] 
+o(1) +\mathbb{E}_{1_U}\big[\mathbb{E}_Q[\mathcal{T}_{G_3}^{Q^2}]\mathbbm{1}_{\mathcal{E}(a^*)}\big]\right) \Bigg] \Bigg), \qquad r \to \infty.
\end{split}
\end{equation}
After $N$ steps we activate the last node in $V$, and the inner conditional expectation becomes
\begin{equation}
\begin{split}
\mathbb{E}_{1_U}\big[\mathcal{T}_{G_N}^{Q^{N-1}}\mathbbm{1}_{\mathcal{E}(a^*)}\big]  
&= \sum_{i_N: \, v_{i_N} \in M(V_N)} \frac{1}{n_N}\, \bigg( f_N\, \mathbb{E}_{1_U}\big[\mathcal{T}_{v_{i_N}}^{Q^{N-1}}\mathbbm{1}_{\mathcal{E}(a^*)}\big] + \mathbb{E}_{1_U} [R_{U_{N+1}}^N] + \mathbb{E}_{1_U}\big[\mathbb{E}_Q[\mathcal{T}_{G_{N+1}}^{Q^N}]\mathbbm{1}_{\mathcal{E}(a^*)}\big] \bigg) \\
& = \sum_{i_N: \, v_{i_N} \in M(V_N)} \frac{1}{n_N} \, f_N\,  \mathbb{E}_{1_U}\big[\mathcal{T}_{v_{i_N}}^{Q^{N-1}}\mathbbm{1}_{\mathcal{E}(a^*)}\big].
\end{split}
\end{equation}
Indeed, as soon as we activate the last node in $V$, we are actually done and we are not interested in what happens after. We can set $R_{U_{N+1}}^N = 0$ and we have $V_{N+1} = \emptyset$, which implies $\mathbb{E}_{1_U}\big[\mathbb{E}_Q[\mathcal{T}_{G_{N+1}}^{Q^N}]\mathbbm{1}_{\mathcal{E}(a^*)}\big] = 0$. Note that we are summing over sequences of nodes such that the $k$-th node is in $M(V_k)$. Therefore we are summing over the set $\mathcal{A}_k$ of (partial) admissible paths, and so we have arrived at \eqref{sumnucl}.

\medskip\noindent
(iii) The claim follows from analogous steps as in (ii), given any admissible path $a \in \mathcal{A}$.
\end{proof}


\subsection{Proof: mean of the transition time} 
\label{subsec:tr}

\begin{proof}[Proof of Theorem~\ref{meantransitiontime}]
Recall that, in the subcritical and the critical regime, we are computing the mean transition time conditioned on the event $A_a$ that the transition follows a fixed admissible path $a = (v_1, \dots, v_{N}) \in \mathcal{A}$. We again distinguish between the three regimes.

\medskip\noindent
(I) $\beta \in (0, \frac{1}{d^*-1})$: subcritical regime. Every term in the sum is of order $r^{\beta(d^*-1)} = o(r)$, which means that the significant terms are the ones with $\bar{d}_k = d^*$ only. The pre-factors of these terms are given by subcritical forks, and so, for any $u \in U$,
\begin{equation}
\begin{split}
\mathbb{E}_{1_U}[\mathcal{T}_G^{Q^0} \,|\, A_a]
&= \mathbb{E}_Q\left[\sum_{k: \,\bar{d}_k = d^*}  
f_k \, \mathbb{E}_{1_U}\big[\mathcal{T}_{v_k}^{Q^{k-1}}\big]\right]
= \mathbb{E}_Q\left[\sum_{k: \,\bar{d}_k = d^*} f_k \,  \frac{(Q_u^{k-1})^{\beta(d^*-1)}}{d^*B^{-(d^*-1)}}\right] [1+o(1)] \\
& = \sum_{k: \,\bar{d}_k = d^*} f_k \,\frac{\gamma_U^{\beta(d^*-1)}}{d^*B^{-(d^*-1)}}\, r^{\beta(d^*-1)} \, [1+o(1)], 
\qquad r \to \infty,
\end{split}
\end{equation}
with $f_k = \frac{1}{n_k}$. The last equality is obtained by using \eqref{gbeq} and \eqref{qlsubcritical}. 

\medskip\noindent
(II) $\beta = \frac{1}{d^*-1}$: critical regime. Every term in the sum is of order $o(r)$, except the terms with $\bar{d}_k = d^*$, which is of order $r$. The significant terms are the ones with $\bar{d}_k = d^*$ only. The pre-factors of these terms are given by critical forks, and so, for any $u \in U$,
\begin{equation}
\begin{split}
\mathbb{E}_{1_U}[\mathcal{T}_G^{Q^0} \,|\, A_a]
&= \mathbb{E}_Q\left[\sum_{k: \,\bar{d}_k = d^*}  f_k \, \mathbb{E}_{1_U}\big[\mathcal{T}_{v_k}^{Q^{k-1}}\big] \right]
=  \mathbb{E}_Q\left[\sum_{k: \,\bar{d}_k = d^*}  f_k \, \frac{Q_u^{k-1}}{d^*B^{-(d^*-1)}+c-\rho_U} \right] [1+o(1)] \\
&= \sum_{k: \,\bar{d}_k = d^*} f_k \, \frac{\gamma_U^{(k-1)}}{d^*B^{-(d^*-1)}+c-\rho_U}\, r\, [1+o(1)],
\end{split}
\end{equation}
with $\gamma_U^{(k-1)}$ defined in \eqref{qlcriticalrv} and
\begin{equation}
f_k = \frac{ \bar{d}_k B^{-(\bar{d}_k -1)} + c-\rho_U}{n_k \bar{d}_k B^{-(\bar{d}_k -1)} + c-\rho_U}.
\end{equation}
The last equality is obtained by using \eqref{gbeq} and \eqref{qlcritical}.

\medskip\noindent
(III) $\beta \in (\frac{1}{d^*-1}, \infty)$: supercritical regime. Denote by $v_{\textrm{sc}}$ the first supercritical node. We know from \eqref{qlsupercritical} that, after $v_{\textrm{sc}}$ is activated, the queue lengths become negligible (order $o(r)$), and the mean transition time is given by the expected time it takes for them to hit zero, i.e.,
\begin{equation}
\mathbb{E}_{1_U}\big[\mathcal{T}_{G}^{Q^0}\big] = T_U(r) = \frac{\gamma_U}{c-\rho_U}\, r\, [1+o(1)], \qquad r \to \infty.
\end{equation} 
\end{proof}


\subsection{Proof: law of the transition time} 
\label{subsec:law}

\begin{proof}[Proof of Theorem~\ref{law}]
We again distinguish between the three regimes.

\medskip\noindent
(I) $\beta \in (0, \frac{1}{d^*-1})$: subcritical regime. Recall that the significant terms in the sum for the mean transition time are those coming from nodes with degree $\bar{d}_k = d^*$ with $d^* < \frac{1}{\beta}+1$. There are $m_{\mathrm{sub}}^a$ such terms, where $m_{\mathrm{sub}}^a$ depends on the path $a \in \mathcal{A}$, and each term comes with a multiplicative factor $f_k$. We can write the transition time along path $a$ divided by its mean as
\begin{equation} \label{splittrtime}
\begin{split}
\frac{\mathcal{T}_G^{Q^0} \,|\, A_a}{\mathbb{E}_{1_U} [\mathcal{T}_G^{Q^0} \,|\, A_a]} 
& = \frac{\mathbb{E}_Q\left[\sum_{k=1}^N \bar{\tau}_k + \sum_{k=2}^{N} R_{U_k}^{k-1}\right]}{\mathbb{E}_{1_U} [\mathcal{T}_G^{Q^0} \,|\, A_a]} \\
& = \frac{ \mathbb{E}_Q\left[\sum_{k'\colon\, \bar{d}_{k'} = d^*} \bar{\tau}_{k'}+  \sum_{k''\colon\,\bar{d}_{k''} < d^*} 
\bar{\tau}_{k''} + \sum_{k=2}^{N} R_{U_k}^{k-1} \right]}{\mathbb{E}_{1_U} [\mathcal{T}_G^{Q^0} \,|\, A_a]}.
\end{split}
\end{equation}
We know that the law of a sum of independent random variables has a density given by the convolution of their densities. Here the nucleation times and the return times can be considered as independent, since they only depend on the queue lengths, which remain close to the initial value in the subcritical regime. 

There are three types of sums in the numerator of the last line of \eqref{splittrtime}. The first type of sum has terms of the form $\bar{\tau}_{k'}/\mathbb{E}_{1_U} [\mathcal{T}_G^{Q^0} \,|\, A_a]$, with $k'$ such that $\bar{d}_{k'} = d^*$. As $r \to \infty$, these are the significant terms, since they are of the same order as the mean transition time. For each of them, i.e., for each $k'$, we have
\begin{equation}
\begin{split}
\lim_{r \to \infty} \mathbb{P}_{1_U} \bigg(\frac{\bar{\tau}_{k'}}{\mathbb{E}_{1_U} [\mathcal{T}_G^{Q^0} \,|\, A_a]} > x\bigg) & = 
\lim_{r \to \infty} \mathbb{P}_{1_U} \bigg(\frac{\bar{\tau}_{k'}}{\mathbb{E}_{1_U} [\bar{\tau}_{k'}]} > \frac{ \mathbb{E}_{1_U} [\mathcal{T}_G^{Q^0} \,|\, A_a]}{\mathbb{E}_{1_U} [\bar{\tau}_{k'}]} \,x \bigg) \\
& = \exp\left(-\frac{\mathbb{E}_Q\left[\sum_{i\colon\,\bar{d}_{i}=d^*} \mathbb{E}_{1_U} [\bar{\tau}_{i}]\right]}{\mathbb{E}_{1_U} [\bar{\tau}_{k'}]}\,x\right) \\
& = \exp\left(-\frac{\sum_{i\colon\,\bar{d}_{i}=d^*}f_{i}}{f_{k'}}\,x\right), \qquad x \in [0, \infty),
\end{split}
\end{equation}
where in the second step we use Proposition~\ref{lawnextnucleationtime}. We write the density as 
\begin{equation}
\mathcal{P}_{\mathrm{sub}}^{f_{k'}, S_{\mathrm{sub}}^a}(x) =  \frac{S_{\mathrm{sub}}^a}{f_{k'}}\exp\left(-\frac{S_{\mathrm{sub}}^a}{f_{k'}}\,x\right), \qquad x \in [0, \infty),
\end{equation} 
with 
\begin{equation}
S_{\mathrm{sub}}^a = \sum_{i\colon\,\bar{d}_{i}=d^*}f_{i}.
\end{equation}
\noindent
The second type of sum has terms of the form $\bar{\tau}_{k''}/\mathbb{E}_{1_U} [\mathcal{T}_G^{Q^0} \,|\, A_a]$, with $k''$ such that $\bar{d}_{k''} < d^*$. As $r \to \infty$, these are negligible, since they are of smaller order than the mean transition time. For each of them, i.e., for each $k''$, we have
\begin{equation}
\lim_{r \to \infty} \mathbb{P}_{1_U} \bigg(\frac{\bar{\tau}_{k''}}{\mathbb{E}_{1_U} [\mathcal{T}_G^{Q^0} \,|\, A_a]} > x\bigg) = 
\lim_{r \to \infty} \mathbb{P}_{1_U} \bigg(\frac{\bar{\tau}_{k''}}{\mathbb{E}_{1_U} [\bar{\tau}_{k''}]} > \frac{ \mathbb{E}_{1_U} [\mathcal{T}_G^{Q^0} \,|\, A_a]}{\mathbb{E}_{1_U} [\bar{\tau}_{k''}]} \,x \bigg), \qquad x \in [0, \infty),
\end{equation}
and the density is $\delta_0$, the Dirac function at $0$. The third type is of the form $R_{U_k}^{k-1}/\mathbb{E}_{1_U} [\mathcal{T}_G^{Q^0} \,|\, A_a]$, with $k= 2, \dots, N$. As $r \to \infty$, these are also negligible, since they are $o(1)$ by Lemma~\ref{returntime}, and hence their density is also $\delta_0$.

The density of $(\mathcal{T}_G^{Q^0} | A_a)/\mathbb{E}_{1_U} [\mathcal{T}_G^{Q^0} |A_a]$ is given by the convolution of the densities of the three types of terms. Since $\delta_0$ gives the identity for the convolution, we can write 
\begin{equation}
\lim_{r \to \infty} \mathbb{P}_{1_U} \bigg( \frac{\mathcal{T}_G^{Q^0}}{\mathbb{E}_{1_U} [\mathcal{T}_G^{Q^0} | A_a]} > x  \,\big|\,  A_a \bigg) = \int_x^{\infty} \bigg(\circledast_{k' = 1}^{m_{\mathrm{sub}}^a} \mathcal{P}_{\mathrm{sub}}^{f_{k'}, S_{\mathrm{sub}}^a}\bigg) (y) dy, \qquad x \in [0, \infty),
\end{equation} 
and we can rename the index $k'$ by $k$. 

\medskip\noindent
(II) $\beta = \frac{1}{d^*-1}$: critical regime. For two reasons we do \emph{not} know how to handle this regime: (a) We do not know the law of the next nucleation times because Proposition~\ref{lawnextnucleationtime} only holds in the subcritical regime. (b) The next nucleation times are \emph{dependent} random variables, and so convolution is no longer relevant.

\medskip\noindent
(III) $\beta \in (\frac{1}{d^*-1}, \infty)$: supercritical regime. Recall that $T_U(r) =  \frac{\gamma_U}{c-\rho_U} r \, [1+o(1)]$, $r \to \infty$. The law of the transition time is given by $\mathcal{P}_3(x)$ from Theorem~\ref{teoremapaper1}. Indeed, the mean transition time is the expected time it takes for the queue lengths in $U$ to hit zero and. With high probability the transition does not occur before or after its mean. Since
\begin{equation}
\lim_{r \to \infty} \mathbb{P}_{1_U} \Big( \mathcal{T}_G^{Q^0} > \mathbb{E}_{1_U} \big[\mathcal{T}_G^{Q^0}\big] \Big) 
= \lim_{r \to \infty} \mathbb{P}_{1_U} \big( \mathcal{T}_G^{Q^0}> T_U(r) \big) = 0,
\end{equation}
we can write
\begin{equation} 
\lim_{r \to \infty} \mathbb{P}_{1_U} \bigg( \frac{\mathcal{T}_G^{Q^0}} {\mathbb{E}_{1_U} [\mathcal{T}_G^{Q^0}]} > x \bigg)
= 0, \qquad x \in [1, \infty).
\end{equation}
We also have
\begin{equation} 
\lim_{r \to \infty} \mathbb{P}_{1_U} \bigg( \frac{\mathcal{T}_G^{Q^0}} {\mathbb{E}_{1_U} [\mathcal{T}_G^{Q^0}]} > x \bigg) 
= 1, \qquad x \in [0,1).
\end{equation}
Hence the density is the Dirac function at $1$.
\end{proof}


\appendix
\renewcommand*{\thesection}{\Alph{section}}


\section{Appendix: minimum of independent forks}
\label{appA}

In this appendix we compute the mean next nucleation time in the situation where the forks competing for nucleation have no nodes in common, hence are independent of each other. Recall that, in the subcritical regime, the nucleation time of a fork is given by an exponential random variable, while in the critical regime it is given by a ``polynomial" random variable, in the sense that its law is truncated polynomial.


\subsection{Subcritical regime: exponential random variables}
\label{appA1}

Let $X_1, \dots, X_n$ be i.i.d.\ exponential random variables with rate $\lambda$. Let $Z = \min \{X_1, \dots, X_n \}$. Then
\begin{equation}
\mathbb{P}_{1_U}(Z > t) = \mathbb{P}_{1_U} (X_1 > t, \dots, X_n > t) = \mathbb{P}_{1_U} (X_1 > t)^n = e^{- n\lambda t}.
\end{equation}
Hence, $Z$ is an exponential random variable with rate $n \lambda$, and we have
\begin{equation}
\mathbb{E}_{1_U}[Z] = \frac{1}{n\lambda} = \frac{1}{n} \, \mathbb{E}_{1_U}[X_1].
\end{equation}
If we consider $X_1, \dots, X_{n_k}$ to be the nucleation times of independent forks of degree $\bar{d}_k$, and $Z$ to be the next nucleation time at step $k$, then we get  
\begin{equation}
\mathbb{E}_{1_U}[\bar{\tau}_k] = f_k^{\mathrm{iid}} \, \mathbb{E}_{1_U}\big[\mathcal{T}_{v_k^*}^{Q^{k-1}}\big], \qquad r \to \infty,
\end{equation}
with $f_k^{\mathrm{iid}} = \frac{1}{n_k}$. 


\subsection{Critical regime: polynomial random variables}
\label{appA2}

Let $X_1, \dots, X_n$ be i.i.d.\ polynomial random variables such that 
\begin{equation}
\mathbb{P}_{1_U}\bigg( \frac{X_i}{\mathbb{E}_{1_U}[X_i]} > x\bigg) = \left\{\begin{array}{ll}
(1-C x)^{\frac{1-C}{C}}, &\text{ if } x \in [0, \frac{1}{C}), \\[0.2cm]
0,  &\text{ if } x \in [\frac{1}{C}, \infty),
\end{array}
\right.
i = 1,\ldots,n,
\end{equation}
with
\begin{equation}
C = \frac{c-\rho_U}{\bar{d}_k B^{-(\bar{d}_k -1)} + c-\rho_U}.
\end{equation}
Let $Z = \min \{X_1, \dots, X_n \}$. Then, for $t = x \, \mathbb{E}_{1_U}[X_i]$, 
\begin{equation}
\mathbb{P}_{1_U}( X_i > t) = \left\{\begin{array}{ll}
\big(1-\frac{C}{\mathbb{E}_{1_U}[X_i]} t\big)^{\frac{1-C}{C}}, &\text{ if } t \in [0, \frac{\mathbb{E}_{1_U}[X_i]}{C}), \\[0.2cm]
0,  &\text{ if } t \in [\frac{\mathbb{E}_{1_U}[X_i]}{C}, \infty),
\end{array}
\right.
i = 1, \dots, n.
\end{equation}
Abbreviate $C = \frac{C_1}{C_1 + C_2}$, where $C_1 = c- \rho_U$ and $C_2 = \bar{d}_k B^{-(\bar{d}_k -1)}$. Then the exponent $\frac{1-C}{C}$ becomes $\frac{C_2}{C_1}$. We have
\begin{equation}
\begin{aligned}
&\mathbb{P}_{1_U}(Z > t) = \mathbb{P}_{1_U} (X_1 > t, \dots, X_n > t) = \mathbb{P}_{1_U} (X_1 > t)^n\\ 
&\qquad = \left\{\begin{array}{ll}
\big(1-\frac{C}{\mathbb{E}_{1_U}[X_i]} t\big)^{n\frac{C_2}{C_1}}, &\text{ if } t \in [0, \frac{\mathbb{E}_{1_U}[X_1]}{C}), \\[0.2cm]
0,  &\text{ if } t \in [\frac{\mathbb{E}_{1_U}[X_1]}{C}, \infty).
\end{array}
\right.
\end{aligned}
\end{equation}
The density function of $Z$ is 
\begin{equation}
f_z(t) = \frac{d}{dt}\big[1-\mathbb{P}_{1_U}(Z > t)\big] = \left\{\begin{array}{ll}
\frac{C}{\mathbb{E}_{1_U}[X_1]}\,n\,\frac{C_2}{C_1}\big(1-\frac{C}{\mathbb{E}_{1_U}[X_1]} t\big)^{n\frac{C_2}{C_1}-1}, 
&\text{ if } t \in [0, \frac{\mathbb{E}_{1_U}[X_1]}{C}), \\[0.2cm]
0,  &\text{ if } t \in [\frac{\mathbb{E}_{1_U}[X_1]}{C}, \infty).
\end{array}
\right.
\end{equation}
Hence
\begin{equation}
\mathbb{E}_{1_U}[Z] = \int_0^{\frac{\mathbb{E}_{1_U}[X_1]}{C}} f_Z(t) t \,dt = \frac{C}{\mathbb{E}_{1_U}[X_1]}n\frac{C_2}{C_1} \int_0^{\frac{\mathbb{E}_{1_U}[X_1]}{C}} \Big(1-\frac{C}{\mathbb{E}_{1_U}[X_1]} t\Big)^{n\frac{C_2}{C_1}-1} t \,dt.
\end{equation}
Substituting $u=1-\frac{C}{\mathbb{E}_{1_U}[X_1]} t$, we get
\begin{equation}
\begin{aligned}
\mathbb{E}_{1_U}[Z] &= \frac{\mathbb{E}_{1_U}[X_1]}{C}n\frac{C_2}{C_1} \int_0^{1} u^{n\frac{C_2}{C_1}-1}(1-u) \, du 
= \frac{\mathbb{E}_{1_U}[X_1]}{C}\,n\,\frac{C_2}{C_1} \bigg[ \int_0^1 u^{n\frac{C_2}{C_1}-1} \, du 
- \int_0^1 u^{n\frac{C_2}{C_1}} \, du \bigg] \\
&= \frac{\mathbb{E}_{1_U}[X_1]}{C}\,n\,\frac{C_2}{C_1} \bigg[\frac{1}{n\frac{C_2}{C_1}} - \frac{1}{n\frac{C_2}{C_1} +1} \bigg] 
= \frac{\mathbb{E}_{1_U}[X_1]}{C}\,n\,\frac{C_2}{C_1} \bigg[\frac{1}{n\frac{C_2}{C_1}(n\frac{C_2}{C_1} +1)} \bigg] 
= \frac{\mathbb{E}_{1_U}[X_1]}{C} \bigg[\frac{1}{n\frac{C_2}{C_1} +1} \bigg] \\
&= \mathbb{E}_{1_U}[X_1]  \frac{C_1 + C_2}{n C_2 + C_1}  
= \frac{\bar{d}_k B^{-(\bar{d}_k -1)} + c-\rho_U}{n \, \bar{d}_k B^{-(\bar{d}_k -1)} + c-\rho_U}\,  \mathbb{E}_{1_U}[X_1].
\end{aligned}
\end{equation}
If we consider $X_1, \dots, X_{n_k}$ to be the nucleation times of independent forks of degree $\bar{d}_k$, and $Z$ to be the next nucleation time at step $k$, then we get
\begin{equation}
\mathbb{E}_{1_U}[\bar{\tau}_k] =  f_k^{\mathrm{iid}}  \,\mathbb{E}_{1_U}[\mathcal{T}_{v_k^*}^{Q^{k-1}}], \qquad r \to \infty,
\end{equation}
with 
\begin{equation}
f_k^{\mathrm{iid}} =  \frac{\bar{d}_k B^{-(\bar{d}_k -1)} + c-\rho_U}{n_k \, \bar{d}_k B^{-(\bar{d}_k -1)} + c-\rho_U}.
\end{equation}



\end{document}